\newcommand{\field}[1]{\mathbb{#1}}
\newcommand{\R}{\field{R}}
\newcommand{\Hp}{\field{H}}
\newcommand{\N}{\field{N}}
\newcommand{\C}{\field{C}}
\newcommand{\Z}{\field{Z}}
\newcommand{\eps}{\varepsilon}
\newtheorem{theorem}{Theorem}
\newtheorem{proposition}{Proposition}[section]
\newtheorem{theoremnum}[proposition]{Theorem}
\newtheorem{claim}[proposition]{Claim}
\newtheorem{lemma}[proposition]{Lemma}
\title{Strong convergence of Kleinian groups: the cracked eggshell}
\author{James W. Anderson \\ {\footnotesize School of Mathematics, University of Southampton, Southampton SO17 1BJ  England}   \\ \\ Cyril Lecuire \\ {\footnotesize Institut de Math\'ematiques de Toulouse, 31062 Toulouse France} }
\begin{document}

\maketitle

\begin{abstract}
In this paper we give a complete description of the set ${\rm SH}(\pi_1(M))$ of discrete faithful representations of the fundamental group of a compact, orientable, hyperbolizable $3$-manifold with incompressible boundary, equipped with the strong topology, with the description given in term of the end invariants of the quotient manifolds. As part of this description, we introduce coordinates on ${\rm SH}(\pi_1(M))$ that extend the usual Ahlfors-Bers coordinates. We use these coordinates to show the local connectivity of ${\rm SH}(\pi_1(M))$ and study the action of the modular group of $M$ on ${\rm SH}(\pi_1(M))$.
\end{abstract}

\section{Introduction and statement of results}
\label{introduction}

Kleinian groups have been studied since the late $19^{\rm th}$ century in the work of Poincar\'{e} and Fricke and Klein, and more extensively since the work of Ahlfors and Bers in the 1960s and Thurston in the 1970s and 1980s.  In this paper, we consider a particular aspect of the basic question of understanding the behavior of sequences of Kleinian groups.  

There are two standard notions of convergence for a sequence of Kleinian groups.  The first is algebraic convergence, which is convergence on generators.  The second is geometric convergence, which is convergence of the quotient hyperbolic $3$-manifolds.  The topology of algebraic convergence on the set of isomorphic, finitely generated Kleinian groups is roughly understood.  In particular, there is a correspondence between connected components of this set of Kleinian groups and marked homotopy classes of compact $3$-manifolds, up to a natural equivalence.    Geometric convergence is considerably less well-behaved, as a sequence of isomorphic, finitely generated Kleinian groups may converge geometrically to an infinitely generated Kleinian group.

Strong convergence of Kleinian groups combines these two different notions, so that a sequence of Kleinian groups converges strongly to a Kleinian group if it converges to that Kleinian group both algebraically and geometrically.  Thurston proposed a picture of what the space of Kleinian groups with the strong topology looks like, which we consider here.

Let $M$ be a compact, orientable, hyperbolizable $3$-manifold with incompressible boundary.  Let ${\rm D}(\pi_1(M))$ denote the set of discrete faithful representations $\rho:\pi_1(M)\rightarrow {\rm PSL}(2,\C)$ equipped with the topology of algebraic convergence (formal definitions are given in Section \ref{background}). The group  ${\rm Isom}^+(\Hp^3)\cong {\rm PSL}(2,\C)$ of orientation-preserving isometries of hyperbolic 3-space acts on ${\rm D}(\pi_1(M))$ by conjugation.   We denote the quotient by ${\rm AH}(\pi_1(M))$.  Note that a sequence $\{ [\rho_n] \}\subset {\rm AH}(\pi_1(M))$ converges to $[\rho]\in {\rm AH}(\pi_1(M))$ if there is a sequence $\{ h_n\}\subset {\rm PSL}(2,\C)$ so that $\{ h_n \rho_n h_n^{-1}\}$ converges to $\rho$ in ${\rm D}(\pi_1(M))$.  When there is no possibility of confusion, we refer to elements of both ${\rm D}(\pi_1(M))$ and ${\rm AH}(\pi_1(M))$ as representations, even though the elements of the latter set are formally equivalence classes of representations.

By considering the quotient manifold $M_\rho = \Hp^3/\rho(\pi_1(M))$ corresponding to a representation $\rho$, we can identify ${\rm AH}(\pi_1(M))$ with the set of hyperbolic $3$-manifolds homotopy equivalent to $M$ up to isometry, where the homotopy equivalence induces  the given representation. A sequence $\{\rho_n\}$ in ${\rm AH}(\pi_1(M))$ converges strongly to a representation $\rho_\infty$ if $\{\rho_n\}$ converges to $\rho_\infty$ algebraically and if $\{\rho_n(\pi_1(M))\}$ converges geometrically to $\rho_\infty(\pi_1(M))$. We  denote by ${\rm SH}(\pi_1(M))$ the set ${\rm AH}(\pi_1(M))$ equipped with the strong topology.  Though we do not use this description, one can identify ${\rm SH}(\pi_1(M))$ with the space of hyperbolic manifolds  homotopy equivalent to $M$ (up to isometry) equipped with a marked pointed Hausdorff-Gromov topology.  

By Bonahon, see \cite{bouts}, each representation $\rho\in{\rm AH}(\pi_1(M))$ is tame, namely the corresponding quotient manifold $M_\rho = \Hp^3/\rho(\pi_1(M))$ is homeomorphic to the interior of a compact $3$-manifold. Combining this result with the uniqueness of the compact core, see \cite{mms}, it follows that the topology of the quotient manifold does not change under strong convergence (compare with \cite{canmi}).  Namely if $\{\rho_n\}$ converges strongly to $\rho_\infty$, then $M_\infty=\Hp^3/\rho_{\infty}(\pi_1(M))$ is homeomorphic to $M_n=\Hp^3/\rho_n (\pi_1(M))$ for $n$ sufficiently large. Using the Ending Lamination Theorem, see \cite{elc1} and \cite{elc2}, and Thurston's Double Limit Theorem, see \cite{thuii}, one can then see that the connected components of ${\rm SH}(\pi_1(M))$ are in one-to-one correspondence with the set of marked homeomorphism types of $3$-manifolds homotopy equivalent to $M$ (we  see that this fact can be deduced from our results as well). In particular, as a first point of contrast with ${\rm AH}(\pi_1(M))$, we show that ${\rm SH}(\pi_1(M))$ does not have the bumping phenomenon described for ${\rm AH}(\pi_1(M))$ for many $M$ in \cite{cancan}. 

In addition to the existence of bumping, another disturbing property of ${\rm AH}(\pi_1(M))$ is that it may not even be locally connected, see \cite{bromcon}, \cite{magid} and \cite{bbcm}. We will show that this does not happen with the strong topology.

\begin{theorem}         \label{locon}
Let $M$ be a compact, orientable, hyperbolizable $3$-manifold with incompressible boundary. Then the space ${\rm SH}(\pi_1(M))$ is locally connected.
\end{theorem}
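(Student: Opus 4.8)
The plan is to deduce the statement from the coordinate description of ${\rm SH}(\pi_1(M))$ that is the central construction of this paper: a homeomorphism $\Phi$ from ${\rm SH}(\pi_1(M))$ onto a parameter space $\mathcal{B}$ of admissible end invariants, restricting on the open stratum ${\rm int}\,{\rm AH}(\pi_1(M))$ to the classical Ahlfors--Bers coordinates. Since local connectivity is a topological invariant, it suffices to prove that $\mathcal{B}$ is locally connected, and I will in fact show it is locally path connected. As the property is local I work near a fixed $\nu\in\mathcal{B}$, hence inside one connected component of ${\rm SH}(\pi_1(M))$ --- a fixed marked homeomorphism type --- whose open stratum $\mathcal{T}$ is connected and locally a cell.

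I aim to establish the following \emph{routing} principle: for every neighbourhood $U$ of $\nu$ there is a smaller neighbourhood $V\subseteq U$ such that $V\cap\mathcal{T}$ is connected and every point of $V$ is joined, by a path in $V$, to a point of $V\cap\mathcal{T}$. Granting this, any two points of $V$ are connected through $V\cap\mathcal{T}$, so $V$ is path connected, and letting $U$ shrink yields the desired neighbourhood basis. The connectivity of $V\cap\mathcal{T}$ is read off the coordinates: splitting the coordinates at $\nu$ into the unpinched geometrically finite directions, which contribute a ball, and the degenerate directions, one sees that the open stratum near $\nu$ looks locally like (a ball) $\times$ (the generic locus of the degenerate part), and that the latter, intersected with a small ball, is connected --- for a pinched curve this is the connectedness of a small one-sided neighbourhood of ``length $0$'', and near a geometrically infinite end with ending lamination $\lambda$ it is the connectedness of the conformal structures lying far out along the rays that limit on $\lambda$ in $\mathcal{B}$.

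For the path-building half of the routing principle I use two elementary moves on the end invariants, performed one curve or one end at a time. The first is \emph{opening a cusp}: a pinching path on the conformal boundary, traversed in the unpinching direction, joins a structure with a parabolic along a curve $c$ to the structures in which $c$ is a short geodesic; this is exactly where the strong topology is indispensable, since the analogous statement fails in ${\rm AH}(\pi_1(M))$, and it holds here because a pinching sequence converges geometrically as well as algebraically, so the length parameter can be taken small enough to keep the path in $V$. The second move deals with a geometrically infinite end with ending lamination $\lambda$ on a boundary subsurface $S_i$: since $\lambda$ is the endpoint in $\mathcal{B}$ of a ray of geometrically finite end invariants whose conformal structure degenerates toward $\lambda$, travelling a short way along such a ray converts the infinite end into a geometrically finite one while moving only slightly in $\mathcal{B}$. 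Applying the second move at every geometrically infinite end and then opening all the parabolic curves carries an arbitrary point of $V$ into $V\cap\mathcal{T}$ along a path in $V$, which is the routing principle.

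The main obstacle is that $\mathcal{B}$ is not a product but the subset cut out by the realisability conditions supplied by Thurston's Double Limit Theorem and the Ending Lamination Theorem, so one must verify that the two moves do not leave $\mathcal{B}$ --- that opening a cusp a little, and perturbing an ending lamination a little, preserve admissibility. This is precisely where incompressibility of $\partial M$ is used: it reduces the admissibility conditions --- the Masur domain / doubly incompressible conditions together with the non-coincidence constraints among the invariants of distinct ends --- to conditions that are open near any admissible configuration, so the moves stay inside $\mathcal{B}$. It is also worth isolating the conceptual point that the pure ending-lamination strata of $\mathcal{B}$ are delicate on their own, but inside $\mathcal{B}$ they come thickened by the geometrically finite invariants accumulating on them, and this thickening, made explicit by the coordinate construction, is what upgrades connectedness to local connectedness. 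With these verifications in hand the routing principle applies and the theorem follows.
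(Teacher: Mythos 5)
Your overall architecture is the paper's: transfer the problem through the end-invariant homeomorphism (Theorem \ref{conf}) to the space of doubly incompressible gallimaufries, and then produce connected neighbourhoods there by building paths through the geometrically finite locus. The difficulty is that the one step you dispose of by saying ``one sees that the open stratum near $\nu$ looks locally like (a ball) $\times$ (the generic locus of the degenerate part)'' and by invoking ``the connectedness of the conformal structures lying far out along the rays that limit on $\lambda$'' is precisely the entire technical content of the theorem. This connectivity cannot be read off from the coordinates: in the algebraic topology the exactly analogous statement is \emph{false} (self-bumping), so whatever argument you give must use something specific to the gallimaufry topology, and nothing in your sketch identifies what that is for the geometrically infinite directions. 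Two interior points whose metrics on $S(L)$ both degenerate toward $L$ may a priori do so in very different ways (different short pants decompositions, different twisting data transverse to $L$), and it has to be shown that they can be joined by an arc every point of which still degenerates toward $L$ in the sense of the ${\cal GA}$-topology. The paper's Proposition \ref{segments} does this by an explicit construction: linear interpolation of Fenchel--Nielsen coordinates on the moderate part (where short pinching curves make all twists nearby), and, on the subsurfaces carrying $L$, interpolation of weight systems on a train track $\tau_n$ carrying both ${\cal F}_P(m_n)$ and $\lambda$, with the arc from $m_n$ to $s_n$ forced to \emph{detour through a huge multiple $K_n\lambda$ of the ending lamination}; Theorem \ref{compactlam} and Hausdorff-limit control of the $\tau_n$ then show every point of every such arc converges to $\Gamma$. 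Some device of this kind is indispensable, and your proposal does not contain it.

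A secondary remark: your routing principle also demands a path inside $V$ from an arbitrary (boundary) point of $V$ into $V\cap{\cal T}$, which is strictly more than is needed and would itself require proof (continuity of an ``unpinching/undegenerating'' ray up to and including its endpoint in the ${\cal GA}$-topology). The paper avoids this entirely: it only joins pairs of \emph{interior} points near $\Gamma$ by arcs staying near $\Gamma$, and then uses the density of geometrically finite minimally parabolic representations together with a closure argument to obtain a connected neighbourhood. I would recommend restructuring along those lines, and in any case supplying the train-track interpolation (or an equivalent) for the degenerate directions; the ``admissibility is open'' point you raise is genuinely needed but is already available, since the set of doubly incompressible gallimaufries is open in ${\cal GA}(\partial M)$ by \cite{espoir}.
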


Even when $M$ is such that ${\rm AH}(\pi_1(M))$ is locally connected, the space ${\rm AH}(\pi_1(M))$ may have so-called self-bumping points, see for instance \cite{mcm}, \cite{brombholt} and \cite{ohshika-div}. We will see that this kind of phenomenon does not appear in ${\rm SH}(\pi_1(M))$.

\begin{theorem} \label{selfb}
Let $M$ be a compact, orientable, hyperbolizable $3$-manifold with incompressible boundary.
Let $\rho\in {\rm SH}(\pi_1(M))$ be a representation uniformizing $M$. Then every neighborhood of $\rho$ contains a neighborhood ${\cal V}\subset {\rm SH}(\pi_1(M))$ of $\rho$ such that ${\cal V}\cap {\rm int}({\rm SH}(\pi_1(M)))$ is connected.
\end{theorem}

The reason why self-bumping points may be present in ${\rm AH}(\pi_1(M))$ and not in ${\rm SH}(\pi_1(M))$ is that some sequences of representations that converge in ${\rm AH}(\pi_1(M))$ do not converge in ${\rm SH}(\pi_1(M))$. This has the following consequence:
 
\begin{lemma}           \label{locom}
Let $M$ be a compact, orientable, hyperbolizable $3$-manifold with incompressible boundary. Then the space ${\rm SH}(\pi(M))$ is not locally compact.
\end{lemma}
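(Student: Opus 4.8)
The plan is to exhibit, near a suitable representation $\rho_0 \in {\rm SH}(\pi_1(M))$, a sequence $\{\rho_n\}$ that converges algebraically (and hence stays in a compact set of ${\rm AH}(\pi_1(M))$, indeed in any prescribed neighborhood of $\rho_0$) but has no subsequence converging in the strong topology; the failure of local compactness then follows because any putative compact neighborhood would have to contain a strong-convergent subsequence. The cleanest place to look for such a sequence is where algebraic and geometric limits are known to diverge, i.e. where a sequence of geometrically finite groups pinches a curve or, more robustly, where one drills geometrically while the algebraic limit sees only a "smaller" group than the geometric limit. So the first step is to choose $M$ and $\rho_0$ so that $\rho_0$ is geometrically finite (say, a maximal cusp or simply a point on the interior of a Bers slice) admitting a sequence of the desired type; one then checks that the construction can be performed inside an arbitrarily small neighborhood of $\rho_0$, which is where the incompressible-boundary hypothesis and the continuity of the Ahlfors–Bers-type coordinates enter.

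Concretely, I would realize such a sequence by the standard "wrapping/Dehn filling" mechanism: take a geometrically finite $\rho_0$ whose quotient $M_{\rho_0}$ contains an embedded annulus or a rank-one cusp, and produce $\rho_n$ by modifying the conformal boundary data (or performing higher and higher Dehn twists along a curve $\gamma$ whose geodesic representative is short) so that $\rho_n \to \rho_0$ algebraically while the geometric limit of $\rho_n(\pi_1(M))$ is a strictly larger Kleinian group $\Gamma_\infty \supsetneq \rho_0(\pi_1(M))$ — for instance, containing a new accidental parabolic or an extra handle. The key computation here is the standard one (as in Thurston, Kerckhoff–Thurston, McMullen) that translation lengths of $\rho_n(\gamma)$ go to $0$ at a controlled rate while the core curves accumulate, forcing the geometric limit to be larger; I would invoke this rather than redo it. The point is that at such $\rho_0$ there is \emph{no} strong limit of any subsequence: any algebraic limit of a subsequence is (a conjugate of) $\rho_0$, but the geometric limit along that subsequence is $\Gamma_\infty \neq \rho_0(\pi_1(M))$, so strong convergence fails for every subsequence.

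Having this, the lemma is immediate: suppose for contradiction that $K$ is a compact neighborhood of $\rho_0$ in ${\rm SH}(\pi_1(M))$. Arrange (using the first step) that the sequence $\{\rho_n\}$ lies in $K$ eventually. By compactness of $K$ in the strong topology, some subsequence $\{\rho_{n_k}\}$ converges strongly to some $\rho_\infty \in K$. Strong convergence implies algebraic convergence, so $\rho_\infty$ is conjugate to $\rho_0$ (all algebraic limits of the sequence are), and hence $M_{\rho_\infty} \cong M_{\rho_0}$; but strong convergence also forces $\rho_{n_k}(\pi_1(M))$ to converge geometrically to $\rho_\infty(\pi_1(M))$, a group isomorphic via the identity-on-generators map to $\rho_0(\pi_1(M))$, contradicting that the geometric limit is the strictly larger $\Gamma_\infty$. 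Therefore no compact neighborhood of $\rho_0$ exists, and ${\rm SH}(\pi_1(M))$ is not locally compact.

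The main obstacle I anticipate is not the abstract argument above but arranging the diverging-geometric-limit sequence \emph{inside an arbitrarily small strong neighborhood} of $\rho_0$, so that it witnesses failure of local compactness at $\rho_0$ rather than merely somewhere; this requires knowing that the new coordinates on ${\rm SH}(\pi_1(M))$ introduced earlier in the paper restrict, near a geometrically finite point, to genuine Ahlfors–Bers coordinates on which one has precise control, and that twisting/pinching can be done with arbitrarily small effect on the algebraic (coordinate) data while still blowing up the geometric side. A secondary technical point is making sure the relevant short curve $\gamma$ genuinely produces a \emph{larger} geometric limit — i.e. that the extra group elements appearing in $\Gamma_\infty$ are not already in $\rho_0(\pi_1(M))$ — which is automatic in the wrapping construction but should be stated carefully using incompressibility of $\partial M$ so that $\pi_1(M)$ injects and no unexpected identifications occur.
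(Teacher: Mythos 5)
There is a genuine gap, and it sits exactly where you flag your ``main obstacle'': the step ``arrange that the sequence $\{\rho_n\}$ lies in $K$ eventually'' is not available. The putative compact neighborhood $K$ is a neighborhood in the \emph{strong} topology, while your sequence converges to $\rho_0$ only algebraically; since by construction no subsequence converges strongly, the $\rho_n$ are precisely \emph{not} close to $\rho_0$ in the strong topology (their convex cores and geometric limits look nothing like $M_{\rho_0}$), and there is no reason any of them enters a prescribed strong neighborhood. Indeed, if one fixed sequence were eventually inside every candidate compact neighborhood of $\rho_0$, it would converge strongly to $\rho_0$, contradicting your own construction; and controlling the Ahlfors--Bers (algebraic) coordinates ``with arbitrarily small effect'' does not control strong neighborhoods -- that is the whole difference between ${\rm AH}$ and ${\rm SH}$. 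A secondary error: you allow $\rho_0$ to be ``a point on the interior of a Bers slice,'' but on ${\rm int}({\rm SH}(M))$ the strong and algebraic topologies agree and this set is homeomorphic to the finite-dimensional Teichm\"uller space ${\cal T}(\partial_{\chi<0}M)$, so ${\rm SH}$ \emph{is} locally compact at interior points; the failure can only be witnessed at a boundary representation with an accidental parabolic (a maximal cusp would do).

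What is actually needed -- and what the paper does, by transporting Lemma \ref{locom2} through the homeomorphism of Theorem \ref{conf} -- is a two-parameter, diagonal construction anchored at a cusped limit $\rho_\infty$: take conformal structures $m_n$ pinching a curve $c$ (so the corresponding geometrically finite representations converge strongly to $\rho_\infty$, which has a rank-one cusp at $c$) and compose with $k$-fold Dehn twists $\phi^k$ along $c$. Twisting along $c$ does not change the lengths of curves disjoint from $c$, and $\ell_{m_n}(c)\to 0$, so \emph{every} diagonal sequence $\rho_{n,k_n}$ still converges strongly to $\rho_\infty$; on the other hand, for fixed $n$ the twist family $\{\rho_{n,k}\}_k$ has no convergent subsequence, hence leaves every compact set. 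Given a putative compact neighborhood $K$ of $\rho_\infty$, one chooses $k_n$ with $\rho_{n,k_n}\notin K$ and obtains a sequence outside $K$ converging strongly to $\rho_\infty$, contradicting $K$ being a neighborhood. Your one-parameter wrapping sequence supplies the ``no strong subsequential limit'' half of this picture but not the ``arbitrarily strongly close to the base point'' half, and without the latter the non-compactness is not located at any point, so local compactness is never contradicted.
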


We denote by ${\rm AH}(M)\subset {\rm AH}(\pi_1(M))$ the set of representations $\rho$ whose quotient manifold $\Hp^3/\rho(\pi_1(M))$ is homeomorphic to the interior of $M$ by a homeomorphism that induces $\rho$.  This set is contained in the closure of the component of the interior of ${\rm AH}(\pi_1(M))$ containing the minimally parabolic Kleinian groups $\Gamma$ for which $M_\Gamma$ is homeomorphic to the interior of $M$ (this can be deduced from \cite{marden}, \cite{sulli}, \cite{elc2}).  Let ${\rm SH}(M)$ denote the set ${\rm AH}(M)$ with the strong topology. Let ${\rm Mod}(M)$ be the group of isotopy classes of orientation-preserving diffeomorphisms of $M$. When $M = S\times I$ is a trivial $I$-bundle, Kerckhoff and Thurston, see \cite{kt}, have shown that the action of ${\rm Mod}(M) = MCG(S)$ on ${\rm AH}(M)$ is not properly discontinuous.  This result has been extended to other manifolds by Canary and Storm, see \cite{canary-storm}.  On ${\rm SH}(M)$ we have the following result.

\begin{theorem} \label{modul}
Let $M$ be a compact, orientable, hyperbolizable $3$-manifold with incompressible boundary. Assume that $M$ is not an $I$-bundle over a closed surface. Then the action of ${\rm Mod}(M)$ on ${\rm SH}(M)$ is properly discontinuous.
\end{theorem}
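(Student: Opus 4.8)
The plan is to argue by contradiction. Suppose ${\rm Mod}(M)$ does not act properly discontinuously on ${\rm SH}(M)$: then there are a compact set $K\subset{\rm SH}(M)$, pairwise distinct mapping classes $\phi_n\in{\rm Mod}(M)$, and points $\rho_n\in K$ with $\phi_n\cdot\rho_n\in K$. Passing to subsequences, compactness gives $\rho_n\to\rho$ and $\phi_n\cdot\rho_n\to\sigma$ in ${\rm SH}(M)$, so both sequences converge algebraically \emph{and} geometrically. Working in the coordinates on ${\rm SH}(M)$ introduced above, and using the definition of the strong topology, I would first record that the end invariants vary continuously in the strong sense along these sequences: after a further subsequence the parabolic locus and the decomposition of $\partial M$ into geometrically finite and geometrically infinite subsurfaces are constant along $\rho_n$, and along $\phi_n\cdot\rho_n$; on the geometrically finite subsurfaces the conformal structures converge \emph{in the relevant Teichm\"uller spaces} rather than escaping to a Thurston-type boundary (the latter being precisely the Kerckhoff--Thurston mechanism, which the strong topology forbids); and on the geometrically infinite subsurfaces the ending laminations converge. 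Compactness of $K$ also yields a single finite family ${\cal C}_K$ of simple closed curves on $\partial M$ containing every parabolic curve and every Margulis-thin curve of every representation in $K$ — here geometric convergence and the Margulis lemma bound the number of thin curves.

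The second step is to describe ${\rm Mod}(M)$. Since $M$ is Haken with incompressible boundary and, by hypothesis, is not an $I$-bundle over a closed surface, Waldhausen's rigidity theorem together with Johannson's theory of the characteristic submanifold $\Sigma(M)$ shows that the map ${\rm Mod}(M)\to\prod_S{\rm MCG}(S)$ (product over the components of $\partial M$) has finite kernel, and that, after passing to a finite-index subgroup (harmless for proper discontinuity), every $\phi_n$ fixes each component of $\partial M$ and is a composition of the identity on the part of $\partial M$ disjoint from $\Sigma(M)$, Dehn twists along the fixed frontier multicurve and along the annuli of the Seifert pieces of $\Sigma(M)$, and, on each interval-bundle piece, an arbitrary mapping class of its base surface. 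Two structural consequences are crucial. First, every interval-bundle piece is supported on a \emph{proper} subsurface $W$ of the component of $\partial M$ containing it, since a piece equal to an entire component would make $M$ an $I$-bundle. Second, the frontier curves of such a piece cannot all be made parabolic in any representation in ${\rm AH}(M)$: they cobound essential annuli (or M\"obius bands) having both ends on the proposed parabolic locus, which violates Thurston's pared-manifold condition.

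Now the contradiction. On the (fixed) geometrically finite part of $\partial M$ the conformal structures of $\rho_n$ converge in Teichm\"uller space and are sent by $\phi_n$ to the conformal structures of $\phi_n\cdot\rho_n$, which also converge in Teichm\"uller space; by proper discontinuity of the mapping class group action on Teichm\"uller space, only finitely many restrictions of $\phi_n$ to the geometrically finite part occur. Using ${\cal C}_K$ and the behavior of Dehn twists along parabolic and thin curves in the strong topology to bound the twisting part of $\phi_n$, and replacing $\phi_n$ by $\phi_1^{-1}\phi_n$, we may assume that $\phi_n$ is the identity except on interval-bundle pieces lying inside the geometrically infinite part of $\partial M$. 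Fix such a piece $W$. Since $W$ is a proper subsurface of its component $S$ and its frontier is not parabolic for $\rho_n$, the surface $W$ sits inside a strictly larger geometrically infinite subsurface $V$ of $S$ whose ending lamination $\nu_n$ fills $V$, hence crosses $\partial W$; therefore an infinite-order $\phi_n|_W$ moves $\nu_n$, and along any unbounded sequence of such mapping classes the lengths of curves crossing $W$ blow up in $\phi_n\cdot\rho_n$ — or else some curve of $W$ would become parabolic in the limit, which is incompatible with the resulting data being a valid end invariant. Either way $\phi_n\cdot\rho_n$ must leave the compact set $K$ unless $\phi_n|_W$ ranges over a finite set. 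Hence only finitely many $\phi_n$ occur, contradicting their distinctness.

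The main obstacle is this final stage: converting the slogan ``$W$ is proper, so no infinite-order mapping class of $W$ fixes the ambient ending lamination'' into a genuine statement that $\phi_n\cdot\rho_n$ escapes $K$. The delicate points are (i) keeping the subsurface $W$ and the larger geometrically infinite piece $V$ fixed along the whole sequence — which is where the finiteness of ${\cal C}_K$ and the pared-manifold obstruction to pinching $\partial W$ are used — and (ii) controlling the ending laminations precisely enough in the strong topology, exploiting that the geometric and algebraic limits agree, to conclude divergence of lengths. Step (ii) genuinely fails when $M=S\times I$: there $W=S$ is not proper, a pseudo-Anosov can fix the ending lamination of a doubly degenerate group, and the corresponding representation has infinite stabilizer — which is exactly why that case is excluded. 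Making (ii) rigorous is where the coordinate machinery of the ``cracked eggshell'', especially the control it provides on how geometric limits sit inside algebraic ones, carries the weight of the argument.
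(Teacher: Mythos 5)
Your overall strategy --- contradiction via two strongly convergent orbits, Johannson's characteristic submanifold to structure ${\rm Mod}(M)$, a dichotomy between the geometrically finite part (where proper discontinuity of the mapping class group action on Teichm\"uller space applies) and the geometrically infinite part, and the observation that ``not an $I$-bundle'' forces the interval-bundle pieces to be supported on proper subsurfaces whose frontier cannot be entirely parabolic --- is the same skeleton as the paper's argument, and your diagnosis of where the hypothesis is used is accurate. But the decisive step is missing, and you say so yourself. The assertion that ``an infinite-order $\phi_n|_W$ moves $\nu_n$, so the lengths of curves crossing $W$ blow up in $\phi_n\cdot\rho_n$'' is not a proof and does not follow from properness of $W$ alone: the action of ${\rm MCG}(W)$ on laminations and curve systems is \emph{not} properly discontinuous in the naive sense (an unbounded sequence of powers of a single Dehn twist in $W$ moves any curve a bounded distance in the curve complex of $W$, for instance), so ``unbounded in ${\rm MCG}(W)$'' does not directly translate into unbounded displacement of the end invariant without a finer argument. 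The same problem infects your earlier reduction: you propose to ``bound the twisting part of $\phi_n$'' along frontier annuli using ${\cal C}_K$ and the behavior of twists along thin curves, but twisting along a curve that is being pinched does not move the geometrically finite end invariants at all (this is exactly the mechanism behind Lemma \ref{locom2} and the failure of local compactness), so bounding that twisting forces you back into the geometrically infinite analysis at the \emph{other} boundary curve of the annulus --- the one the doubly-incompressible condition guarantees is not parabolic. None of this is carried out.

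It is worth recording how the paper closes exactly this gap, because its mechanism is different from what you sketch and considerably lighter. First, Claim \ref{commute} together with J{\o}rgensen's inequality (Lemma \ref{extr}) shows that if $\phi_n$ restricted to a \emph{single} incompressible subsurface $N\subset\partial M$ with non-abelian $\pi_1$ is isotopic to the identity, then $\phi_{n*}$ is eventually constant on every element of $\pi_1(M)$, and finiteness results of Johannson and Jaco--Shalen upgrade this to eventual constancy of $\phi_n$ in ${\rm Mod}(M)$. So one never needs to control $\phi_n$ on all of $\partial M$ piece by piece, as your plan requires; it suffices to pin it down on one non-abelian piece. Second, on a surface $H$ facing a geometrically infinite end, the control is obtained not from a dynamical statement about ${\rm MCG}(H)$ acting on laminations but from length functions: for a leaf $d$ of a pants decomposition of $H$ one has $\eps_n\ell_{m'_n}(\phi_n(d))=\eps_n\ell_{m_n}(d)\to i(d,\lambda)<\infty$, while Theorem \ref{compactlam} forces $\eps_n\ell_{m'_n}(\phi_n(d))\to\infty$ whenever the Hausdorff limit of $\phi_n(d)$ is not a simple closed curve; hence $\phi_n(d)$ is eventually constant, and finitely many such curves and transversals determine $\phi_n|_H$ up to finitely many isotopy classes (Claim \ref{isof}). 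If you want to salvage your route, this length-function argument (or a subsurface-projection hierarchy argument in the style of Masur--Minsky) is what has to replace the slogan in your final paragraph, and a version of it is also needed to bound the twisting along frontier annuli whose pinched boundary curve renders the geometrically finite invariants blind to the twist.
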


Furthermore, we show that when $M$ is a trivial $I$-bundle, this action has fixed points and hence is not properly discontinuous.

The authors would like to thank the referee for their careful reading of the paper.

\subsection{Outline of the paper}
\label{outline}

Thurston \cite{thui} gives the following conjectural description of ${\rm AH}(M)$ and ${\rm SH}(M)$. When $M$ does not contain an essential annulus, ${\rm AH}(M)$ is homeomorphic to a closed ball, like an hard-boiled egg with its shell, whereas ${\rm SH}(M)$ is obtained by thoroughly cracking the eggshell on a hard surface. In order to get a precise description of this cracked eggshell,  we  introduce some coordinates for ${\rm SH}(M)$.

Say that a representation $\rho\in {\rm AH}(M)$ {\em uniformizes} $M$, and note that by definition, $M_\rho$ has a compact core $K$ homeomorphic to $M$.  Without loss of generality, we may choose $K$ to intersect each cusp of $M_\rho$ in either a single annulus (in the case of a rank $1$ cusp) or a torus (in the case of a rank $2$ cusp), see \cite{mcc}.  Define the ends of $M_\rho$ to be the complementary regions of $K\cup cusps$. Such an end is {\em geometrically infinite} if it can be chosen to be contained in the convex core $C_\rho$ of $M_\rho$ and {\it geometrically finite} otherwise. We  define the {\em conformal end invariants} of $\rho$ to consist of the hyperbolic metrics associated to the conformal structure at infinity of each geometrically finite end and of the ending laminations associated to its geometrically infinite ones. Such an invariant $(F,m,L)$ consists of a complete hyperbolic metric $m$ on an open subsurface $F$ of $\partial M$ and of a geodesic lamination $L\subset\partial M-F$ on its complement; we  call it a {\em gallimaufry} on $M$ or on $\partial M$ (see definition in Section \ref{gall}). We put on the set of gallimaufries a topology that extends the usual topology of the Teichm\"uller space. We  say that the gallimaufry $(F,m,L)$ is {\em doubly incompressible} if there is a transverse measure $\lambda$ supported by $L$ and $\eta>0$ such that for every essential disc, annulus or M\"obius band $(E,\partial E)\subset (M,\partial M-F)$ we have $i(\partial E,\lambda)\geq\eta$.   We prove the following result:
 
  \begin{theorem}        \label{conf}
 Let $M$ be a compact, orientable, hyperbolizable $3$-manifold with incompressible boundary.
 The ending map that associates to a representation uniformising $M$ its  end invariants is a homeomorphism from ${\rm SH}(M)$ into the set of doubly incompressible gallimaufries on $\partial M$.
 \end{theorem}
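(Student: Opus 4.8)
The plan is to show that the ending map $\Phi$, which sends a representation $\rho$ uniformizing $M$ to the collection of conformal structures at infinity of the geometrically finite ends of $M_\rho$ together with the ending laminations of its geometrically infinite ends, is a well-defined continuous bijection onto the set of doubly incompressible gallimaufries on $\partial M$, with continuous inverse; this in particular yields the asserted embedding. The proof splits into five parts: (a) $\Phi$ is well-defined, i.e. $\Phi(\rho)$ is always a doubly incompressible gallimaufry; (b) $\Phi$ is injective; (c) $\Phi$ is surjective; (d) $\Phi$ is continuous; and (e) $\Phi^{-1}$ is continuous.

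For (a), given $\rho$ uniformizing $M$, Scott's core theorem \cite{scott} together with a choice of compact core meeting the cusps in annuli and tori \cite{mcc} decomposes $M_\rho$ into a core homeomorphic to $M$, the cusps, and the ends; by Bonahon tameness \cite{bouts} each end is geometrically finite or geometrically infinite, and carries respectively a point of the Teichm\"uller space of a subsurface of $\partial M$ or an ending lamination, the latter almost filling the relevant subsurface by a covering-theorem argument, so the data assemble into a gallimaufry in the sense of section \ref{gall}. Double incompressibility is essentially a restatement of discreteness and faithfulness: were there an essential annulus or M\"obius band $(E,\partial E)\subset(M,\partial M-F)$ with $i(\partial E,\lambda)$ not bounded below over all transverse measures $\lambda$ on $L$, one could produce curves whose geodesic representatives in $M_\rho$ leave every compact set while remaining in the convex core, contradicting the structure of the geometrically infinite ends; I plan to make this quantitative using continuity of geodesic length and the description of ending laminations.

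For (b), if $\rho_1$ and $\rho_2$ uniformize $M$ with $\Phi(\rho_1)=\Phi(\rho_2)$, then $M_{\rho_1}$ and $M_{\rho_2}$ are homeomorphic with matching end invariants, so the Ending Lamination Theorem \cite{elc1,elc2}, combined with tameness, produces an isometry between them in the correct homotopy class, whence $[\rho_1]=[\rho_2]$ and they coincide in ${\rm SH}(M)$, which is ${\rm AH}(M)$ as a set. For (c), given a doubly incompressible gallimaufry $(F,m,L)$, choose minimally parabolic geometrically finite $\rho_n\in{\rm AH}(M)$ whose conformal boundary data converge to $m$ over $F$ and degenerate toward $L$ over $\partial M-F$, for instance by pinching or twisting along weighted multicurves approximating $L$. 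Double incompressibility is exactly the hypothesis that drives Thurston's Double Limit Theorem \cite{thuii}, in the form valid for manifolds with incompressible (pared) boundary, which gives an algebraically convergent subsequence with limit $\rho_\infty\in{\rm AH}(M)$ --- double incompressibility also guaranteeing that the limit still uniformizes $M$ --- and a geometric analysis of the approximating ends, as in Thurston and Bonahon, identifies $\Phi(\rho_\infty)=(F,m,L)$.

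For (d) and (e), continuity of $\Phi$ along a strongly convergent sequence $\rho_n\to\rho$ reduces to three statements built into the topology on gallimaufries from section \ref{gall}: on an end that stays geometrically finite the conformal structure at infinity varies continuously (Ahlfors--Bers), on an end that becomes geometrically infinite in the limit the conformal structure escapes every compact subset of Teichm\"uller space toward the corresponding piece of the limit gallimaufry, and ending laminations of already geometrically infinite ends converge; passage to the geometric limit is what supplies the control on Margulis tubes and thick parts needed for the last two statements, which is why strong rather than merely algebraic convergence is essential. I expect (e) to be the crux. Given $\Phi(\rho_n)\to\Phi(\rho)$, the doubly incompressible limit gallimaufry lets us re-run the Double Limit Theorem to extract an algebraically convergent subsequence $\rho_{n_k}\to\rho'$; part (d) gives $\Phi(\rho')=\Phi(\rho)$ and part (b) forces $\rho'=\rho$, so $\rho_n\to\rho$ algebraically. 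The main obstacle is upgrading this to strong convergence, that is, ruling out a strictly larger geometric limit: any discrepancy between the algebraic and geometric limits would, through the covering theorem and the accidental parabolics or essential annuli it would force, contradict the uniform lower bound $i(\partial E,\lambda)\geq\eta$ recorded in the limit gallimaufry; turning this into a clean argument, and carrying out the bookkeeping across all ends and cusps simultaneously, is the delicate point. Assembling (a)--(e) gives the theorem.
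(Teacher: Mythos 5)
Your overall architecture matches the paper's: injectivity from the Ending Lamination Theorem, continuity of the ending map $\Phi$ from an analysis of strongly convergent sequences (the paper's Proposition \ref{concon}), and the reverse direction from a compactness statement (the paper's Propositions \ref{algcon} and \ref{strocon}). Note, however, that the theorem only claims a homeomorphism \emph{into} the set of doubly incompressible gallimaufries, so your surjectivity step (c) is not needed and is not established in the paper.

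The genuine gap is in your step (e), and it is a circularity rather than a mere omission of detail. After extracting an algebraically convergent subsequence $\rho_{n_k}\to\rho'$ you write that ``part (d) gives $\Phi(\rho')=\Phi(\rho)$''. But (d) asserts continuity of $\Phi$ only along \emph{strongly} convergent sequences --- necessarily so, since end invariants are notoriously discontinuous with respect to algebraic convergence (this is precisely the Kerckhoff--Thurston/Anderson--Canary phenomenon that distinguishes ${\rm AH}(M)$ from ${\rm SH}(M)$). At the moment you invoke (d) you have only algebraic convergence, and you propose to deduce strong convergence \emph{afterwards}. The correct order, and the bulk of the paper's work, is the reverse: one must first identify the ends of the algebraic limit directly from the hypothesis that the gallimaufries converge --- showing that each component of $F_\infty$ supports a convex pleated surface in the limit manifold (Lemma \ref{geof}), that each component of $L_\infty$ is the ending lamination of a geometrically infinite end (Lemma \ref{ire}, via continuity of the length function and the covering theorem), and then reconstructing the convex core, with double incompressibility accounting for every parabolic; only then does Carath\'eodory convergence of the domains of discontinuity deliver strong convergence. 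Relatedly, ``re-running the Double Limit Theorem'' is not an application of Thurston's theorem: the hypothesis here (convergence of end invariants in ${\cal GA}(\partial M)$ to a doubly incompressible gallimaufry) is genuinely different, and establishing algebraic convergence under it is the content of the paper's Proposition \ref{algcon}, proved via degenerations to actions on $\R$-trees, Otal's continuity theorem, and a cut-and-paste construction showing that no component of $L_\infty$ is realized while double incompressibility forces one to be. These two inputs are where the substance of the theorem lives, and your proposal assumes both.
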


By the proof of the Ending Lamination Theorem, see \cite{elc1} and \cite{elc2}, the map associating its ending gallimaufry to a (conjugacy class of) representation is one-to-one. Thus we  only need to show that the ending map defined in Theorem \ref{conf} is proper and continuous.
 
 We  then prove that the set of doubly incompressible gallimaufries equipped with its topology is locally connected, does not have self-bumping points, and is not locally compact.

In Section \ref{background}, we  give the definitions and some basic results that are used in the paper. In particular, we  define the space of gallimaufries and describe some of its properties. The proof that the ending map is proper  is divided in two. In Section \ref{algebric} we  show that given a sequence of representations whose end invariants converge to a doubly incompressible gallimaufry, a subsequence converges algebraically. This can be viewed as a refinement of Thurston's Double Limit Theorem. The proof  mixes arguments of Thurston's original proof, see \cite{thuii}, arguments of Otal's proof, see \cite{meister2}, and a cut and paste operation. In Section \ref{strong} we  show that the algebraically convergent subsequence provided in the preceding Section actually converges strongly, by studying the behaviour of the convex cores of the sequence and concluding that the limit sets converge in the Hausdorff topology. In Section \ref{continuity} we  show that the ending map is continuous. The main difficulty here is in handling the geometrically infinite ends of the limit. This will be dealt with by proving a relative version of Bonahon's Intersection Lemma, see \cite{bouts}. At this point we  have proved Theorem \ref{conf} and Lemma \ref{locom}. In Section \ref{secselb}, we  prove Theorems \ref{locon} and \ref{selfb} by constructing appropriate paths in the space of doubly incompressible gallimaufries. In Section \ref{secmodul} we  prove Theorem \ref{modul}.

\section{Background material and definitions}
\label{background}

The purpose of this Section is to provide the background material we use in this paper.    We often pass to subsequences; unless otherwise stated, we will without further comment use the same notation for the subsequence as for the original sequence.

\subsection{Kleinian groups and $3$-manifolds}
\label{kleinian}

Standard sources for material on Kleinian groups and hyperbolic $3$-manifolds are \cite{notes}, \cite{jap} and \cite{kapo}.   A {\em Kleinian group} is a discrete subgroup of the group ${\rm PSL}(2,\C)$ of all orientation-preserving isometries of hyperbolic $3$-space $\Hp^3$.  Throughout this paper, we assume that Kleinian groups are torsion-free and non-elementary, so that they contain a non-abelian free subgroup.  

A {\em hyperbolizable $3$-manifold} is an orientable $3$-manifold that admits a complete Riemannian metric all of whose sectional curvatures are equal to $-1$. A $3$-manifold endowed with such a metric is a {\em hyperbolic $3$-manifold}. It follows that an orientable hyperbolic $3$-manifold $N$ can be expressed as the quotient $N =\Hp^3/\Gamma$ for a Kleinian group $\Gamma$, and that $\Gamma$ is unique up to conjugacy in ${\rm PSL}(2,\C)$.

An (orientation-preserving) isometry of $\Hp^3$ extends to a conformal homeomorphism on the boundary at infinity $\partial_\infty\Hp^3 = \widehat\C$ of $\Hp^3$. The {\it domain of discontinuity} $\Omega_\Gamma$ of a Kleinian group $\Gamma$ is the largest open subset of $\widehat\C$ on which the action of $\Gamma$ is properly discontinuous. 

The {\it limit set} $\Lambda_\Gamma \subset \widehat\C$ of $\Gamma$ is the complement of $\Omega_\Gamma$. Let $H_\Gamma\subset\Hp^3$ be the {\it convex hull} in $\Hp^3$ of $\Lambda_\Gamma$, which is the smallest non-empty convex subset of $\Hp^3$ invariant under the action of $\Gamma$.  The {\it convex core} $C_\Gamma$ of $\Gamma$ is the quotient of $H_\Gamma$ by the action of $\Gamma$.

A Kleinian group $\Gamma$ is {\em geometrically finite} if the unit neighbourhood of its convex core $C_\Gamma$ has finite volume in $\Hp^3/\Gamma$.  A Kleinian group $\Gamma$ is {\em convex co-compact} if its convex core $C_\Gamma$ is compact, or equivalently, if it is geometrically finite and contains no parabolic elements. A Kleinian group is {\em minimally parabolic} if every parabolic isometry belongs to a rank two free abelian subgroup.

A compact $3$-manifold is {\em hyperbolizable} if there exists a Kleinian group $\Gamma$ so that the interior ${\rm int}(M)$ of $M$ is homeomorphic to the quotient manifold $\Hp^3/\Gamma$.  Note that, under this definition, a hyperbolizable $3$-manifold $M$ is necessarily {\em irreducible} (so that every embedded $2$-sphere in $M$ bounds a $3$-ball in $M$), orientable, and {\em atoroidal} (so that every incompressible torus $T$ in $M$ is homotopic into $\partial M$).

An embedded compact submanifold $W$ in a $3$-manifold $M$ is {\em incompressible} if $\pi_1(W)$ is infinite and if the inclusion $W\hookrightarrow M$ induces an injective map on fundamental groups.  A compact $3$-manifold has {\em incompressible boundary} if each component of $\partial M$ is incompressible.

For a compact $3$-manifold $M$, let $\partial_{\chi<0} M$ denote the union of the components of $\partial M$ of negative Euler characteristic, so that $\partial_{\chi<0} M$ consists of $\partial M$ with all boundary tori removed.

Let $F\subset\partial M$ be an incompressible compact surface. An {\em essential annulus} (or M\"obius band) $E$ in $(M,F)$ is a properly embedded incompressible annulus (or M\"obius band) $(E,\partial E)\subset (M,F)$ that cannot be homotoped into $F$ by a homotopy fixing $\partial E$. An {\em essential disc} $D$ is a properly embedded disc $(D,\partial D)\subset (M,F)$ that cannot be homotoped into $F$ by a homotopy fixing $\partial D$.

\subsection{Discrete faithful representations}
\label{dis}

Let $M$ be a compact, orientable, hyperbolizable $3$-manifold.  A {\em discrete faithful representation} of $\pi_1(M)$ into ${\rm PSL}(2,\C)$ is an injective homomorphism $\rho: \pi_1(M)\rightarrow {\rm PSL}(2,\C)$ whose image $\rho(\pi_1(M))$ is a Kleinian group.  A discrete faithful representation $\rho$ of $\pi_1(M)$ is {\em geometrically finite} if the image group $\rho(\pi_1(M))$ is geometrically finite, and is {\em convex co-compact} if $\rho(\pi_1(M))$ is convex co-compact.  

Let $\rho:\pi_1(M)\rightarrow {\rm PSL}(2,\C)$ be a discrete faithful representation. Assume moreover that the quotient manifold $M_\rho=\Hp^3/\rho(\pi_1(M))$ is homeomorphic to ${\rm int}(M)$ by a homeomorphism $f: {\rm int}(M)\rightarrow M_\rho=\Hp^3/\rho(\pi_1(M))$ that induces $\rho$, so that $\rho = f_*$. Under these assumptions, say that $\rho$ {\em uniformises} $M$.  

Choose now a pairwise disjoint set of horoballs in $\Hp^3$ so that each horoball is centered at a parabolic fixed point of $\rho(\pi_1(M))$, and there is a horoball centered at the fixed point of each parabolic subgroup of $\rho(\pi_1(M))$ and hence invariant under the corresponding parabolic subgroup.  It is a standard application of the Margulis lemma that such a set of horoballs exists.   The {\em cuspidal part} of $M_\rho$ is the quotient of such a collection of horoballs.  Where relevant, we will assume that we have made a convenient choice of such a collection of horoballs. 

Let ${\rm D}(\pi_1(M))$ be the space of all discrete faithful representations of $\pi_1(M)$ into ${\rm PSL}(2,\C)$.  By choosing a fixed set of $p$ generators $g_1,\ldots, g_p$ for $\pi_1(M)$, we can realize ${\rm D}(\pi_1(M))$ as a subspace of $({\rm PSL}(2,\C))^p$ by the identification $\rho\mapsto (\rho(g_1),\ldots, \rho(g_p))$; the topology thus obtained is called the {\em algebraic topology} or the {\em topology of algebraic convergence}. By \cite{jorgi}, ${\rm D}(\pi_1(M))$ is closed in this topology. In this paper, we do not work with all of ${\rm D}(\pi_1(M))$, but rather with the connected components of its interior; for a full description of ${\rm D}(\pi_1(M))$ in light of the proof of the Ending Lamination Theorem, see \cite{accarre}. The representations contained in a given connected component of the interior of ${\rm D}(\pi_1(M))$ are the geometrically finite minimally parabolic representations uniformising a given compact, orientable, hyperbolizable $3$-manifold $M'$ homotopy equivalent to $M$, by work of Ahlfors, Bers, Kra, Marden, Maskit, Sullivan, and Thurston. Such a component of ${\rm int}({\rm D}(\pi_1(M)))$ is uniquely defined by the given manifold $M'$.

If we instead consider the Chabauty topology on subgroups of ${\rm PSL}(2,\C)$, we get the {\em geometric topology} or the {\em topology of geometric convergence}.  A sequence $\{\Phi_n \}$ of Kleinian groups {\em converges geometrically} to a Kleinian group $\Phi_\infty$ if every accumulation point $a$ of every sequence $\{ a_n\in \Phi_n\}$ lies in $\Phi_\infty$ and if every element $a$ of $\Phi_\infty$ is the limit of a sequence $\{ a_n\in \Phi_n\}$.  We will normally consider geometric convergence for the sequence $\{ \rho_n(\pi_1(M))\}$ for a sequence $\{\rho_n\}\subset {\rm D}(\pi_1(M))$; let $G_\infty$ be the geometric limit of  $\{ \rho_n(\pi_1(M))\}$.  In this case, note that the limit manifold $\Hp^3/G_\infty$ is not necessarily homeomorphic to ${\rm int}(M)$, and in fact the geometric limit $G_\infty$ of a sequence of (isomorphic) finitely generated Kleinian groups can be infinitely generated.

The sequence $\{\rho_n\}\subset {\rm D}(\pi_1(M))$ {\em converges strongly} if $\{\rho_n\}$ converges algebraically to some representation $\rho_\infty$ and if $\{ \rho_n(\pi_1(M))\}$ converges geometrically to $\rho_\infty(\pi_1(M))$. As we commented  in the introduction, strong convergence preserves the topology of the quotient manifold.

\begin{lemma}           \label{topo}
Let $M$ be a compact, orientable, hyperbolizable $3$-manifold with incompressible boundary.  Let $\{\rho_n\}$ be a sequence of representations uniformising $M$ that converges strongly to a representation $\rho_\infty$. Then $\rho_\infty$ uniformises $M$.
\end{lemma}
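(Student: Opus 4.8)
The plan is to transport a compact core of $M_\infty$ into $M_n$ by an almost-isometric embedding coming from geometric convergence, to use algebraic convergence to control its marking, and then to invoke uniqueness of the compact core together with Bonahon's tameness theorem to identify $M_\infty$ topologically. Write $\Gamma_\infty=\rho_\infty(\pi_1(M))$, $\Gamma_n=\rho_n(\pi_1(M))$, $M_\infty=\Hp^3/\Gamma_\infty$, $M_n=\Hp^3/\Gamma_n$; we may assume $\partial M\neq\emptyset$ (otherwise $M$ is closed, $M_n=M_\infty=M$, and there is nothing to prove), so $M$ is a Haken manifold. By definition of strong convergence the groups $\Gamma_n$ converge geometrically to $\Gamma_\infty$, so I would first invoke the standard consequence of geometric convergence (the existence of approximate isometries): there are base points, radii $r_n\to\infty$, numbers $\eps_n\to 0$, and $(1+\eps_n)$-bi-Lipschitz embeddings $\phi_n$ of the $r_n$-ball $B(x_\infty,r_n)\subset M_\infty$ into $M_n$, respecting chosen base frames. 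Fixing a compact core $K\subset M_\infty$ (Scott, \cite{scott}), for $n$ large one has $K\subset B(x_\infty,r_n)$, so $C_n:=\phi_n(K)$ is a compact submanifold of $M_n$ homeomorphic to $K$.

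Next I would pin down the marking of $\phi_n$ using the algebraic part of strong convergence. Since $\rho_n(g)\to\rho_\infty(g)$ for each element $g$ of a fixed finite generating set of $\pi_1(M)$, the $\phi_n$ can be chosen so that, after identifying $\pi_1(K)$ with $\pi_1(M)$ via the inclusion $K\hookrightarrow M_\infty$ followed by $\rho_\infty^{-1}$, the homomorphism induced on fundamental groups by $\phi_n|_K\colon K\to M_n$ is conjugate in $\Gamma_n$ to $\rho_n$. In particular $C_n$ is a compact core of $M_n$ carrying the marking $\rho_n$. Since $\rho_n$ uniformises $M$, a uniformising homeomorphism ${\rm int}(M)\to M_n$ restricts to a compact core of $M_n$ also carrying the marking $\rho_n$; so by uniqueness of the compact core \cite{mms} (in its marked form, which for the Haken manifold $M$ also follows from Waldhausen's theorem) there is a homeomorphism $C_n\to M$ intertwining these two markings. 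Composing with $\phi_n|_K$ yields a homeomorphism $g\colon K\to M$ such that $g^{-1}$ followed by the inclusion $K\hookrightarrow M_\infty$ induces $\rho_\infty$ on $\pi_1$, up to conjugacy.

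Finally I would use Bonahon's theorem \cite{bouts} that $M_\infty$ is topologically tame: $M_\infty$ is homeomorphic to the interior of any of its compact cores, and this homeomorphism can be taken to restrict to the identity on $K$, with $M_\infty\setminus{\rm int}(K)\cong\partial K\times[0,\infty)$. Hence $g$ extends to a homeomorphism ${\rm int}(M)\to M_\infty$, which is still determined on $\pi_1$ by its restriction to $K$ and therefore induces $\rho_\infty$; that is, $\rho_\infty$ uniformises $M$. The main obstacle is the marking statement of the second paragraph — that the approximate isometries produced by \emph{geometric} convergence can be chosen compatibly with the \emph{algebraically} convergent generators. This is well known (see the discussions of algebraic versus geometric limits in \cite{notes} and \cite{jap}) and is exactly the place where "strong" rather than merely "geometric" is used; the remaining steps are routine bookkeeping with \cite{mms} and \cite{bouts}.
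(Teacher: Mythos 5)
Your argument is essentially the paper's: both push a compact core $K$ of $M_\infty$ through the approximate isometries furnished by geometric convergence, identify $\phi_n(K)$ as a compact core of $M_n$, and conclude via uniqueness of the compact core \cite{mms} together with tameness \cite{bouts}. You are in fact somewhat more careful than the paper about the marking --- the paper's proof only records that $M_n$ is homeomorphic to $M_\infty$, whereas your use of the algebraic convergence to make $\phi_n|_K$ compatible with $\rho_n$ is precisely what is needed to get a homeomorphism ${\rm int}(M)\rightarrow M_\infty$ that actually induces $\rho_\infty$, as the definition of ``uniformises'' requires.
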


\begin{proof}
By the proof of Marden's tameness conjecture for such $3$-manifolds, see \cite{bouts} (see also \cite{agol} or \cite{caga} for the general case), there is a compact $3$-manifold $M'$ such that $M_\infty=\Hp^3/\rho_\infty (\pi_1(M))$ is homeomorphic to ${\rm int}(M')$. Consider a compact core $K$ for $M_\infty$, which we can choose to be homeomorphic to $M'$. Since $\{\rho_n(\pi_1(M))\}$ converges geometrically to $\rho_\infty(\pi_1(M))$, there are points $x_n$, $n\in\N\cup \{ \infty\}$, and maps $\phi_n:M_\infty\rightarrow M_n$ with $\phi_n(x_\infty)=x_n$ such that the restrictions of the $\phi_n$ to $B(x_\infty,R_n)$ are $q_n$-bilipschitz with $R_n\longrightarrow\infty$ and $q_n\longrightarrow 1$, see \cite{jap}. For $n$ sufficiently large, we have $K\subset B(x_\infty,R_n)$. By construction $\phi_n(K)$ is a compact core for $M_n$. By \cite{mms}, any compact core for $M_n$ is homeomorphic to $K$. It follows that $M_n$ is homeomorphic to $M_\infty$ for $n$ sufficiently large.  
\end{proof}

On ${\rm int}({\rm D}(\pi_1(M)))$, the algebraic and strong topologies are equivalent.

The group ${\rm PSL}(2,\C)$ acts on ${\rm D}(\pi_1(M))$ by conjugation. We denote by ${\rm AH}(\pi_1(M))$ the quotient of ${\rm D}(\pi_1(M))$  by ${\rm PSL}(2,\C)$ endowed with the algebraic topology. We  denote by ${\rm SH}(\pi_1(M))$ the quotient of ${\rm D}(\pi_1(M))$ by ${\rm PSL}(2,\C)$ endowed with the strong topology. The interiors of both ${\rm AH}(\pi_1(M))$ and ${\rm SH}(\pi_1(M))$, with their respective topologies, are the quotient of ${\rm int}({\rm D}(\pi_1(M))$ by ${\rm PSL}(2,\C)$.

Let ${\rm D}(M)$ be the set of representations $\rho\in {\rm D}(\pi_1(M))$ that uniformise $M$. Recall that this means that  the quotient manifold $M_\rho=\Hp^3/\rho(\pi_1(M))$ is homeomorphic to ${\rm int}(M)$ by a homeomorphism $f: {\rm int}(M)\rightarrow M_\rho=\Hp^3/\rho(\pi_1(M))$ that induces $\rho$.  (Hence, the interior of ${\rm D}(M)$ consists of quasiconformal deformations of a given minimally parabolic Kleinian group uniformizing $M$.)  We denote by ${\rm AH}(M)$, respectively ${\rm SH}(M)$, the quotient of ${\rm D}(\pi_1(M))$  by ${\rm PSL}(2,\C)$ endowed with the algebraic topology, respectively the strong topology. When $\partial M$ is incompressible, it follows from the work of Ahlfors, Bers, Kra, Maskit, Sullivan and Thurston that ${\rm int}({\rm AH}(M))={\rm int}({\rm SH}(M))$ is homeomorphic to the Teichm\"uller space ${\cal T}(\partial_{\chi<0} M)$; in particular it is a topological ball.

It follows from Lemma \ref{topo} that for every topological manifold $M$, ${\rm SH}(M)$ is a closed set and is disjoint from any other set ${\rm SH}(M')\subset{\rm SH}(\pi_1(M))$. Hence to understand ${\rm SH}(\pi_1(M))$, we only need to understand each of the ${\rm SH}(M')$ separately as $M'$ varies over all the $3$-manifolds homotopy equivalent to $M$.

\medskip

An $\R$-tree is a path metric space such that every two points can be joined by a unique geodesic arc. Let ${\cal T}$ be an $\R$-tree and let $G$ be a group acting on ${\cal T}$ by isometries.  We say that the action is {\em small} if the edge stabilizers are virtually cyclic.  We say that the action is {\em minimal} if no proper subtree of ${\cal T}$ is invariant under the action of $G$.  We will normally consider actions of $\R$-trees that are both small and minimal.

Morgan and Shalen \cite{mors1} introduced a compactification of ${\rm AH}(M)$ by small minimal isometric actions of $\pi_1(M)$ on $\R$-trees. Consider an element $g$ of $\pi_1(M)$. For $\rho\in {\rm AH}(M)$ we  denote by $\ell_\rho(g)$ the translation distance of $\rho(g)$ (which is $0$ if the isometry is parabolic). Given an isometric action of $\pi_1(M)$ on an $\R$-tree ${\cal T}$, we  denote by $\ell_{\cal T}(g)$ the translation distance of the action of $g$ on ${\cal T}$. A sequence of representations $\{\rho_n\}\subset {\rm AH}(M)$ tends to a minimal isometric action of $\pi_1(M)$ on an $\R$-tree ${\cal T}$ if there are $\eps_n\longrightarrow 0$ such that $\eps_n\ell_{\rho_n}(g)\longrightarrow \ell_{\cal T}(g)$ for every element $g$ of $\pi_1(M)$, see \cite{conti}. Multiplying the sequence $\eps_n$ by a given constant yields an isometric action on another $\R$-tree which is homothetic to the first one. Hence ${\rm AH}(M)$ has thus been compactified by actions on $\R$-trees up to homothety.

Replacing $\Hp^3$ by $\Hp^2$ we get a compactification of the Teichm\"uller space which by Skora's Theorem, see  \cite{skora}, is equivalent to Thurston's compactification by projective measured geodesic laminations (see Section \ref{compactification}).

\subsection{Geodesic laminations}
\label{geod lam}

Standard sources for material on geodesic laminations are \cite{penner} and \cite[Appendice]{meister2}.  Note that unless otherwise explicitly stated, a surface of negative Euler characteristic will be equipped with a complete hyperbolic metric of finite area.

A {\em geodesic lamination} $L$ on a closed hyperbolic surface $S$ is a compact set that is the (non-empty) disjoint union of complete embedded geodesics. Note that this definition can be made independent of the choice of metric on $S$, see \cite[Appendice]{meister2} for example.  A geodesic lamination is {\em minimal} if it does not contain a geodesic lamination as a proper subset. A minimal lamination is either a simple closed geodesic or an {\em irrational lamination}.  A leaf of a geodesic lamination is {\em recurrent} if it lies in a minimal sublamination.  A geodesic lamination is the disjoint union of finitely many minimal sublaminations and finitely many non-recurrent leaves.   The {\em recurrent part} of a geodesic lamination is the union of its recurrent leaves. It is itself a geodesic lamination.  A {\em multi-curve} is a union of disjoint simple closed geodesics.  We  say that two geodesic laminations $L$ and $L'$ {\em cross} if at least one leaf of $L$ transversely intersects a leaf of $L'$.

Let $F$ be a compact surface with boundary. We define a geodesic lamination in $F$ similarly as in the case of a closed surface by considering a hyperbolic metric with geodesic boundary on $F$ (again the definition can be made independent of the metric). A geodesic lamination $L\subset F$ is {\em peripheral} if $L$ is a simple closed curve freely homotopic to a component of $\partial F$.

Let $L\subset S$ be a connected geodesic lamination that is not a simple closed curve. The {\em warren} $W(L)$ of $L$ is the smallest subsurface of $S$ with geodesic boundary containing $L$. Notice that we may have $W(L) =S$.
It is not hard to see that $W(L)$ contains finitely many simple closed curves that are disjoint from $L$, see \cite[\textsection 2.4]{espoir}. Removing from $W(L)$ an annulus around each such curve, we get the {\em surface embraced by} $L$ that we  denote by $S(L)$. When $L$ is a simple closed curve, we take $S(L)$ to be an (open) annular neighbourhood of $L$. In the particular case that $L$ is a non-connected geodesic lamination, we take $S(L)$ to be the disjoint union of the $S(L_i)$ as $L_i$ runs through the connected components of $L$. Notice that $S(L)$ is an open surface (i.e. without boundary).

 A {\em measured geodesic lamination} $\lambda$ consists of a geodesic lamination $|\lambda|$ and a transverse measure on $|{\lambda}|$. Any arc $k\cong [0,1]$ embedded in $S$ transverse to $|\lambda|$, such that  $\partial k\subset S-|\lambda|$, is endowed with a transverse measure $d\lambda$ such that:

 \begin{enumerate}[-]
 \item the support of $d\lambda |_{k}$ is $|\lambda|\cap k$;
 \item if an arc $k'$ can be homotoped to $k$ by a homotopy preserving $|\lambda|$ then $\int_k\! d\lambda=\int_{k'} d\lambda$.
 \end{enumerate}

If $\lambda$ is a measured geodesic lamination, then its support $|\lambda|$ contains only recurrent leaves. Two measured geodesic laminations cross if their supports cross.

We  denote by ${\cal ML}(S)$ the space of measured geodesic laminations on $S$ endowed with the weak$^*$ topology on transverse measures. If $\gamma$ is a weighted simple closed curve with weight $w(\gamma)$ and $\lambda$ is a measured geodesic lamination transverse to $\gamma$, the {\em intersection number} $i(\lambda,\gamma)$ is defined by $i(\lambda,\gamma)=w(\gamma)\int_{|\gamma|}\! d\lambda$. Weighted simple closed curves are dense in ${\cal ML}(S)$ and so $i$ extends to a continuous function $i:{\cal ML}(S)\times {\cal ML}(S)\rightarrow \R$ (\cite{rees}, see also \cite{bouts}).

Given a complete hyperbolic metric $s$ on $S$, the length of a weighted simple closed curve with support $c$ and weight $w\in\R$ is $w \ell_s(c)$, where $\ell_s(c)$ is the length of $c$ with respect to $s$. This length function extends continuously to a function $\ell_s:{\cal ML}(S)\rightarrow \R$ called the {\em length function}, see \cite{bouts}.

This definition can be extended to define the length of a measured geodesic lamination in a hyperbolizable $3$-manifold as follows. Let $M$ be a compact hyperbolizable $3$-manifold with boundary. We are not interested in curves lying in a torus component of $\partial M$, so we use the notation ${\cal ML}(\partial M)$ for ${\cal ML}(\partial_{\chi<0} M)$. Let $S$ be compact subsurface of $\partial_{\chi<0} M$. Let $\rho\in {\rm AH}(M)$ be a representation uniformizing $M$ and let $c\subset S$ be a simple closed curve. Denote by $c^*$ the closed geodesic in $M_{\rho}=\Hp^3/\rho(\pi_1(S))$ in the free homotopy class defined by $c$, if such a geodesic exists. We denote by $\ell_{\rho}(c^*)$ the length of $c^*$ with respect to the hyperbolic metric on $M_{\rho}$.  When $\rho(c)$ is a parabolic isometry, we take $\ell_{\rho}(c^*)=0$. This allows us to define the length of a weighted multi-curve.  Using the density of weighted multi-curves in ${\cal ML}(\partial M)$, we can then define the length $\ell_{\rho}(\lambda^*)$ of a measured geodesic lamination $\lambda\in{\cal ML}(\partial M)$.

We can associate to a measured geodesic lamination $\beta$ on a hyperbolic surface $S$ a small minimal action of $\pi_1(S)$ on an $\R$-tree ${\cal T}_{\beta}$ dual to $\beta$, see \cite{meister2}. If $c$ is a simple closed curve, we denote by $\ell_{\beta}(c)$ the translation distance of an isometry of ${\cal T}_{\beta}$ corresponding to $c$. The action of  $\pi_1(S)$ on ${\cal T}_{\beta}$ we get in this way satisfies $\ell_{\beta}(c)= i(\beta,c)$. Notice that this property completely defines the minimal action of $\pi_1(S)$ on ${\cal T}_{\beta}$, see \cite{meister2}.

 Let $S$ be a connected hyperbolic surface and let $q:\Hp^2\rightarrow S$ be the covering projection. Let $L\subset S$ be a geodesic lamination and let $\pi_1(S)\curvearrowright{\cal T}$ be a minimal action of $\pi_1(S)$ on an  $\R$-tree ${\cal T}$. Then $L$ is {\em realized} in ${\cal T}$ if there is a continuous equivariant map $\Hp^2\rightarrow {\cal T}$ whose restriction to any lift of a leaf of $L$ is injective. When $S$ is a component of the boundary of a $3$-manifold $M$, we extend this definition to actions of $\pi_1(M)$ on $\R$-trees in the following way. Given an action of $\pi_1(M)$ on an $\R$-tree ${\cal T}$, we use the map $i_*:\pi_1(\partial M)\rightarrow\pi_1(M)$ induced by the inclusion to get an action of $\pi_1(\partial M)$ on ${\cal T}$ (which is still small when $\partial M$ is incompressible). By saying that $L$ is realized in ${\cal T}$, we actually mean that $L$ is realized in the minimal tree invariant under the action of $i_*(\pi_1(S))$ on ${\cal T}$, equipped with the (minimal) action of $\pi_1(S)$

A simple closed curve $c$ in a compact (i.e. closed or compact with boundary) surface $S$ is {\em essential} if $c$ does not bound a disc in $S$. A compact subsurface $F\subset S$ is {\em essential} if every simple closed curve $c\subset F$ that bounds a disc in $S$ bounds a disc in $F$.
 
\subsection{Train tracks}
\label{train tracks}

Consider a compact surface $S$ endowed with a complete hyperbolic metric of finite area.  The purpose of this Section is to introduce the notion of a {\em train track}, which is an object on a surface used to combinatorially encode measured geodesic laminations.  Specifically, a  {\em train track} $\tau$ in $S$ is the union of finitely many "rectangles" $b_i$, called the  {\em branches}. In a rectangle $[0,1]\times [0,1]$ we call a segment $\{p\}\times [0,1]\subset [0,1]\times [0,1]$ a {\em vertical} segment and a segment $[0,1]\times \{p\}\subset [0,1]\times [0,1]$ a {\em horizontal} segment. The two extremal vertical segments are the {\em vertical sides}. The branches of a train track satisfy:
\begin{enumerate}[-]
\item  a branch $b_i$ is the image of a rectangle $[0,1]\times[0,1]$ under a smooth map whose restriction to $]0,1[\times[0,1]$ is an embedding;
\item  the union of the double points of a branch is either empty or a non-degenerate vertical segment;
\item  given a pair of branches and the corresponding rectangles and maps, the intersection of the images is either empty or a non-degenerate segment;
\item  given the collection of rectangles and smooth maps producing the branches, every connected component of the union of the images of the vertical sides is a simple arc embedded in $S$.
\end{enumerate}
Furthermore, we will assume that the closure of the complement of a train track $\tau$ contains no components that are either discs, monogons, bigons, punctured discs or annuli.

The images of the vertical segments $\{p\}\times [0,1]$ are the {\em ties}. A maximal connected union of ties lying in different branches is a {\em switch}. A sub-track of a train track $\tau$ is a train track all of whose branches are branches of $\tau$.

A geodesic lamination $L $ is  {\em  carried by a train track} $\tau$ when there is a hyperbolic metric $m$ on $S$ such that the $m$-geodesic lamination $L$ lies in $\tau$ and is transverse to the ties. A geodesic lamination $L $ is {\em minimally carried} by $\tau$ if no proper sub-track of $\tau$ carries $L$. A measured geodesic lamination $\lambda$ is {\em carried} by a train track $\tau$ if its support $|\lambda|$ is carried by $\tau$. Associating to each branch $b$ of $\tau$ the transverse measure $\lambda(b)$ of a tie, we get a {\em weight system} for $\tau$. A weight system is a function $\{{\rm branches}\}\rightarrow\R^+$ satisfying the switch conditions: each switch is two-sided and the sum of the weights of the branches on one side is equal to the sum on the other side. If a train track $\tau$ minimally carries a measured geodesic lamination, there is a bijection between the set of measured geodesic laminations carried by $\tau$ and the weight systems for $\tau$, see \cite[Th. 1.7.12 and 2.7.4]{penner}. 

\subsection{End invariants}

Consider a discrete faithful representation $\rho:\pi_1(M)\rightarrow {\rm PSL}(2,\C)$ that uniformizes $M$. By a result of Scott, the quotient manifold $M_\rho$ has a compact core $K$ homeomorphic to $M$. We can choose $K$ to intersect a chosen collection ${\cal C}$ of cusps of $M_\rho$ (see Section \ref{dis}) in annuli or tori. The {\em ends} of $M_\rho$ are (equivalence classes of) the complementary regions in $M_\rho$ of $K\cup {\cal C}$. To each end we associate an open surface $F\subset\partial K\approx\partial M$, and the end is homeomorphic to $F\times[0,\infty)$, by the Tameness Theorem.

Denote the convex core of $M_\rho$ by $C_\rho$.  An end of $M_\rho$, as defined above, is {\em geometrically infinite} if it is contained in the convex core $C_\rho$ of $M_\rho$ (up to a compact piece) and {\it geometrically finite} otherwise.  We note that a Kleinian group is geometrically finite by the definition given earlier in Section \ref{kleinian} if and only if it has finitely many ends, each of which is geometrically finite by this definition.

The quotient $\Omega_\rho/\rho(\pi_1(M))$  of the domain of discontinuity is a Riemann surface of finite type and adds a natural conformal boundary to the open manifold $M_\rho$. This yields a natural embedding $\Omega_\rho/\rho(\pi_1(M))\hookrightarrow\partial M$ well defined up to homotopy. To each geometrically finite end $E=F\times[0,\infty)$ is associated the component of $\Omega_\rho/\rho(\pi_1(M))$ homeomorphic to $F$. Thus we get a point in the Teichm\"uller space ${\cal T}(F)$; this is the  end invariant associated to this end.

Let $E\approx F\times[0,\infty)$ be a geometrically infinite end of $M_\rho$. Thurston associated to $E$, see \cite{notes}, a minimal geodesic lamination $L$ on $F$ defined as follows. Consider a sequence $\{c_n\}\subset F$ of simple closed curves whose geodesic representatives $c_n^*\subset E \subset M_\rho$ exit every compact subset of $E$.  Make $c_n$ into a geodesic lamination by using the counting measure.  The existence of such a sequence of curves is a non-trivial fact proved in \cite{bouts}. Extract a subsequence such that $\{c_n\}$ converges in ${\cal PML}(F)$ to a projective measured geodesic lamination $[\lambda]$. Then $[\lambda]$ is supported by $L$. Furthermore $L$ does not depend on the choice of $\{ c_n\}$, as long as $\{ c_n^*\}$ exits every compact subset of $E$.

Combining the  end invariants of the geometrically finite ends and the ending laminations of the geometrically infinite ends, we get the  end invariants of $\rho$. In Section \ref{gall}, we  describe more precisely the kind of objects thus obtained.

\subsection{Thurston's compactification}                
\label{compactification}

The Teichm\"uller space ${\cal T}(S)$ of a hyperbolic surface $S$ can be compactified by actions on $\R$-trees, up to homothety. The actions thus obtained are small and minimal.  By Skora's Theorem, see \cite{skora}, such an action is dual to a measured geodesic lamination. Since  ${\cal T}(S)$ is compactified by actions on $\R$-trees up to homothety, we actually get a projective measured geodesic lamination. In order to prove some results in the present paper, we  need a more precise description of the behavior of a sequence $\{ m_n\}\subset {\cal T}(S)$ tending to a projective measured geodesic lamination $[\lambda]$. In this Section we  recall how Thurston compactifies ${\cal T}(S)$ directly by projective measured geodesic laminations (that is, without using $\R$-trees), following the exposition given in \cite[Expos\'e 8]{flp}.

Let $m$ be a complete hyperbolic metric on $S$. Let $P\subset S$ be a multi-curve such that the components of $S-P$ are $3$-holed spheres.  We refer to a $3$-holed sphere as a {\em pair of pants}, and such a multi-curve $P$ is called a {\em pants decomposition} of $S$. Following \cite[\textsection I.2. Expos\'e 8]{flp} and using $P$, we  associate a partial {\em measured foliation}  ${\cal F}_P(m)$ of $S$ to $m$.  (Here, a {\em measured foliation} of a surface $S$ is a singular foliation of $S$ with a transverse measure, where the measure of a curve transverse to the foliation is preserved by homotopies preserving the foliation.  A {\em partial measured foliation} is a measured foliation of a subset of $S$.)  Let $R$ be a pair of pants and let $l_1,l_2,l_3$ be the leaves of $P$ bounding $R$. 

Assume first that  $\ell_m(l_i)\leq \ell_m(l_j)+\ell_m(l_k)$ for every $i\neq j\neq k$. For $i\neq j$, let $k_{i,j}$ be the geodesic segment orthogonal to $l_i$ and $l_j$. Let $T_{i,j}$ be the set of points at a distance at most $\frac{1}{4}(\ell_m(l_i)+\ell_m(l_j)-\ell_m(l_k))$ from $k_{i,j}$. This set is foliated by the curves $\{z\: |\: {\rm d}(k_{i,j},z)\mbox{ is constant}\}$ and the transverse measure is given by the distance between two leaves. Thus in $R$ we have three foliated sets. Notice that by the choice of their widths they don't intersect in ${\rm int}(R)$. Thus we have constructed a partial measured foliation ${\cal F}_P(m)$ of $R$ (see figure \ref{figfeuille}).

 \begin{figure}[hbtp]
\psfrag{a}{Triangular inequality}
\psfrag{b}{$\ell_m(l_1)> \ell_m(l_2)+\ell_m(l_3)$}
\psfrag{c}{$l_1$}
\psfrag{d}{$l_2$}
\psfrag{e}{$l_3$}
\psfrag{f}{$k_{1,2}$}
\psfrag{g}{$k_{2,3}$}
\psfrag{h}{$k_{1,3}$}
\psfrag{i}{$k_{1,1}$}
\psfrag{j}{$T_{1,2}$}
\psfrag{k}{$T_{2,3}$}
\psfrag{l}{$T_{1,3}$}
\psfrag{m}{$T_{1,1}$}
\psfrag{n}{$A$}
\psfrag{o}{$A'$}
\centerline{\includegraphics{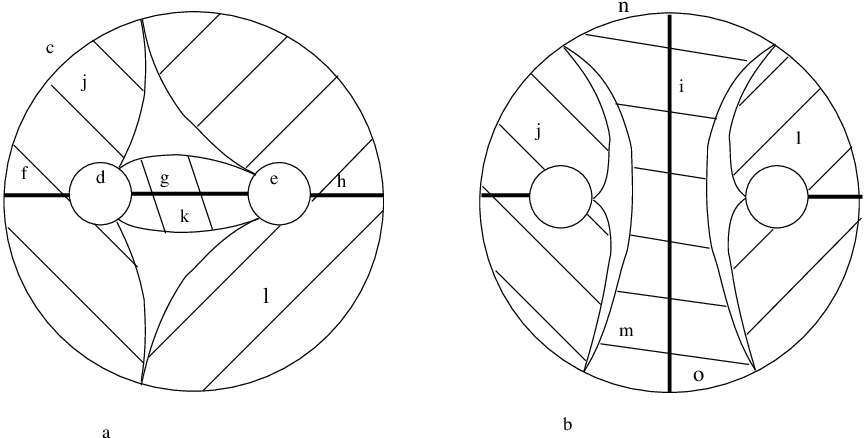}}
\caption{The partial foliation of $R$}
\label{figfeuille}
\end{figure} 

If  on the other hand $\ell_m(l_1)> \ell_m(l_2)+\ell_m(l_3)$ (up to relabeling, this is the only case left to consider) we take $T_{1,2}$, respectively $T_{1,3}$, to be the set of points at a distance at most $\frac{\ell_m(l_2)}{2}$ from $k_{1,2}$, foliated as before, respectively the set of points at a distance at most $\frac{\ell_m(l_3)}{2}$ from $k_{1,3}$. Let $k_{1,1}$ be the (simple) geodesic segment orthogonal at both ends to $l_1$. The set of points of $l_1$ not lying in the interior of $T_{1,2}\cap l_1$ nor of $T_{1,3}\cap l_1$ form two arcs $A$ and $A'$. We take $T_{1,1}$ to be the union of the curves $\{z\: |\: {\rm d}(k_{1,1},z)\mbox{ is constant}\}$ starting at $A$ (notice that $k_{1,1}$ may not lie in $T_{1,1}$). Thus we have constructed a partial measured foliation ${\cal F}_P(m)$ of $R$ (see figure \ref{figfeuille}).

It is proved in \cite[Expos\'e 8]{flp} that the projection mapping $m$ to ${\cal F}_P(m)$ is a homeomorphism from ${\cal T}(S)$ into the set of partial measured foliations giving a non-zero measure to every leaf of $P$. Notice that ${\cal F}_P(m)$ depends on the choice of $P$.

One can extend this measured partial foliation into a singular measured foliation of the whole surface. This singular foliation is well defined up to isotopy. It has been noticed by Thurston that there is a natural bijection between equivalence classes (up to isotopy and Whitehead moves, see \cite{flp}) of singular measured foliations of a surface and measured geodesic laminations, see \cite{levitt}. To a measured foliation ${\cal F}_P(m)$, there corresponds the measured geodesic lamination $\lambda$ satisfying $i({\cal F}_P(m),c)=i(\lambda,c)$ for every simple closed curve $c$. From now on we  give the same name to a foliation and to the corresponding lamination.

The measured geodesic lamination ${\cal F}_P(m)$ roughly describes the length spectrum of $m$, as can be seen in the following result, see \cite[Theorem 2.2]{thuii}:
\begin{theoremnum}           \label{compactlam}
Let $m_0$ be a complete hyperbolic metric on $S$, and let $P$ be a pants decomposition of $S$.  Let $\eps>0$ be such that $\ell_{m_0}(c)\geq\eps$ for every leaf $c$ of $P$. Then there is $Q = Q(\eps)$ such that every measured geodesic lamination $\lambda$ on $S$ satisfies $i(\lambda,{\cal F}_P(m_0))\leq \ell_{m_0}(\lambda)\leq i(\lambda,{\cal F}_P(m_0))+Q \ell_{m_0}(\lambda)$. 
\end{theoremnum}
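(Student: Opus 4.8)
The plan is to compare the length of a measured geodesic lamination $\lambda$ with the geometric intersection number $i(\lambda,{\cal F}_P(m_0))$ on a pants-by-pants basis, exploiting the explicit model of the partial foliation ${\cal F}_P(m_0)$ built in the preceding pages. First I would reduce to the case where $\lambda$ is a weighted multi-curve, since these are dense in ${\cal ML}(S)$ and both sides of the claimed inequality are continuous (the length function by the remarks recalled above, the intersection number by \cite{bouts}); then it suffices to prove the estimate with a uniform constant $Q$ for all such $\lambda$. By homogeneity we may even assume $\lambda$ is a single simple closed geodesic $c$, and must show $i(c,{\cal F}_P(m_0)) \leq \ell_{m_0}(c) \leq i(c,{\cal F}_P(m_0)) + Q\,\ell_{m_0}(c)$; the general multi-curve case follows by summing over components.

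The lower bound is essentially immediate: the partial foliation ${\cal F}_P(m_0)$ is built inside $S$ from transverse measures given by honest hyperbolic distances along geodesic arcs $k_{i,j}$ (and $k_{1,1}$), so each time $c$ crosses a foliated band $T_{i,j}$ its $m_0$-length picks up at least the transverse measure of that crossing; realizing $c$ as its $m_0$-geodesic and adding up the contributions of all bands it meets gives $i(c,{\cal F}_P(m_0)) \leq \ell_{m_0}(c)$. For the upper bound I would decompose $c\cap R$ for each pair of pants $R$ of the decomposition $S-P$ into arcs, and classify each arc by which two boundary leaves of $R$ it joins (or, in the degenerate case, arcs running back to $l_1$). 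On the portion of such an arc lying inside the foliated bands $T_{i,j}$, its length is controlled by the transverse measure it contributes, i.e. by its share of $i(c,{\cal F}_P(m_0))$; the only unaccounted length is on the ``core'' pieces of $R$ outside the bands — near the seams of the pair of pants and near the boundary leaves — but the geometry here is controlled purely by $\eps$ via the collar lemma and the fact that $\ell_{m_0}(l_i)\geq\eps$ for every leaf $l_i$ of $P$. Each arc of $c\cap R$ crosses some leaf $l_i$ of $P$ transversally (or is parallel to one), so the number of such arcs is bounded by $\tfrac{1}{\eps}\ell_{m_0}(c)$ up to a constant, and each contributes at most a bounded (depending on $\eps$) length from its core piece; choosing $Q = Q(\eps)$ to absorb this gives $\ell_{m_0}(c) \leq i(c,{\cal F}_P(m_0)) + Q\,\ell_{m_0}(c)$.

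The main obstacle is the careful bookkeeping near the \emph{seams} of the pants and, in the non-triangular case $\ell_{m_0}(l_1) > \ell_{m_0}(l_2) + \ell_{m_0}(l_3)$, near the band $T_{1,1}$ that wraps around $l_1$: here an arc of $c$ can travel a long way along $l_1$ while accumulating foliation measure only slowly, so one must check that this excess length is still proportional to $\ell_{m_0}(c)$ with a constant depending only on $\eps$ (this is where the hypothesis that $P$ has no short leaves is genuinely used — a short $l_i$ would force an arbitrarily long collar and break the bound). I would handle this by covering $R$ with finitely many pieces — the three (or four) foliated bands, a central hexagonal region bounded by the seams, and three collar annuli around the $l_i$ — and estimating the length of $c$ in each piece separately, noting that in the collars and central hexagon the $m_0$-geometry is determined up to bounded distortion by the lengths $\ell_{m_0}(l_i) \in [\eps, \infty)$ together with a bounded number of combinatorial types of arc. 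Summing these local estimates over the finitely many pants of $S-P$ and invoking density and continuity to pass back from multi-curves to arbitrary $\lambda$ completes the argument; the constant $Q$ produced this way depends on $\eps$ and on the topological type of $S$ but not on $m_0$ or $\lambda$.
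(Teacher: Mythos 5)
First, a point of comparison: the paper does not prove this statement at all --- it is quoted from Thurston (\cite[Theorem 2.2]{thuii}, with the underlying geometric estimates for the partial foliations carried out in \cite[Expos\'e 8]{flp}) and closed with a $\Box$. So there is no argument in the paper to match yours against; your proposal has to stand on its own. The reduction to weighted multi-curves by density and continuity is fine, and so is the lower bound: the transverse measure of ${\cal F}_P(m_0)$ is the variation of distance functions to the seams, hence is $1$-Lipschitz along any path, so the geodesic representative of $c$ picks up at most $\ell_{m_0}(c)$ of measure and $i(c,{\cal F}_P(m_0))\leq\ell_{m_0}(c)$.

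The upper bound, however, has a genuine gap at its central quantitative step. You bound the total wasted length by (number of arcs of $c$ cut off by $P$) times a constant $C(\eps)$, and you justify the count ``number of arcs $\lesssim \ell_{m_0}(c)/\eps$'' by saying each arc crosses a leaf of $P$ of length at least $\eps$. But a lower bound on the cuff lengths bounds collar widths from \emph{above}, not from below: when the leaves of $P$ are very long, their collars are thin and the seams $k_{i,j}$ joining two long cuffs are short, so an essential arc of $c$ crossing a pair of pants can be arbitrarily short. (Take all cuffs of length $T\to\infty$; a dual curve crosses $P$ while having length tending to $0$.) Hence the number of arcs is not bounded by $C(\eps)\ell_{m_0}(c)$, and ``waste $\leq$ (number of arcs)$\times C(\eps)$'' does not yield $Q(\eps)\,\ell_{m_0}(c)$ --- it fails exactly in the regime of degenerating metrics with $\ell_{m_0}(l_i)\geq\eps$ in which the theorem is applied in this paper. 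Your parenthetical remark that short leaves are the danger has the collar-lemma logic backwards for this step: short leaves (excluded by hypothesis) would hurt the per-arc waste bound, while \emph{long} leaves are what destroy the arc count. A second, related gap: you assert that on the portion of an arc inside a band $T_{i,j}$ the length is controlled by the transverse measure it contributes, but a geodesic arc can run nearly parallel to the leaves (e.g.\ close to a seam, or crossing the collar of a long cuff near the foot of a seam) and accumulate length while picking up essentially no measure. The actual proof must show that such ``unproductive'' travel is either of bounded total amount per strand or is proportionally small compared to the measure the strand does pick up, with constants depending only on $\eps$; this local estimate, which is the real content of Thurston's theorem, is what is missing from your outline.
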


Now we explain how a projective measured geodesic lamination is associated to a diverging sequence of points of ${\cal T}(S)$. First, note that there exists $\eps>0$ so that if a simple closed geodesic $c\subset S$ has length less than $\eps$ then every simple closed geodesic crossing $c$ has length at least $\eps$. The existence of such an $\eps$ can be easily deduced from the Collar Lemma and $\eps$ does not depend on the hyperbolic metric on $S$. Take any pants decomposition $Q$ of $S$. Given a sequence $\{m_n\}\subset{\cal T}(S)$, we extract a subsequence such that there exists $N$ such that for every leaf $c$ of $Q$ either $\ell_{m_n}(c)\leq\eps$ for every $n\geq N$ or $\ell_{m_n}(c)\geq\eps$ for every $n\geq N$. If a leaf $c$ of $Q$ satisfies $\ell_{m_n}(c)\leq\eps$ for every $n\geq N$ then we replace it by a simple closed curve $d$ that crosses $c$ and is disjoint from $Q-c$. Thus we get a new pants decomposition $P$ such that for every leaf $c$ of $P$, $\ell_{m_n}(c)\geq\eps$ for every $n\geq N$. Now we consider ${\cal F}_P(m_n)$. Assume that $\{m_n\}$ is a diverging sequence. By Theorem \ref{compactlam}, ${\cal F}_P(m_n)$ diverges as well. We extract a subsequence so that $\{ [{\cal F}_P(m_n)]\}$ converges in the space of projective measured geodesic laminations.

Thus we get Thurston's compactification of Teichm\"uller space by projective measured geodesic laminations. Using Theorem \ref{compactlam} it is easy to see that this compactification is the same as the one obtained by combining the Morgan-Shalen compactification by actions on $\R$-trees with Skora's Theorem.

Forgetting the transverse measure, we  say that a sequence $\{m_n\}$ of complete hyperbolic metrics on $S$ {\em tends to a lamination $L$} on $S$ if every subsequence contains a further subsequence converging to a projective measured geodesic lamination supported by $L$.

\subsection{Compactification of Teichm\"uller space for surfaces with boundary}   
\label{withboundary}

Given a sequence of complete hyperbolic metrics $\{ m_n\}$ on a closed surface, we will need to describe the behavior of the sequence on some specified subsurfaces. Namely we need to define a compactification of Teichm\"uller space for surfaces with boundary.

The fastest way to do so is to use the compactification by actions on $\R$-trees and Skora's Theorem. Consider a compact surface $F$ with boundary and let $\{m_n\}$ be a sequence of hyperbolic metrics on $F$ such that $\partial F$ is an union of $m_n$-geodesics for every $n$. Assume that $\{m_n\}$ does not contain a convergent subsequence and that there is a non-peripheral simple closed curve $c\subset F$ such that $\frac{\ell_{m_n}(\partial F)}{\ell_{m_n}(c)}\longrightarrow 0$.  We use $c$ to make sure that there is a non-peripheral closed curve whose length grows much faster than the length of $\partial F$. This ensures that each element of $\partial F$, viewed as a conjugacy class in $\pi_1(S)$, has a fixed point when acting on the $\R$-tree to which $\{m_n\}$ tends. Such a fixed point is necessary to use Culler-Morgan-Shalen's Theory and Skora's Theorem. Then, by \cite{mors1} and \cite{skora}, a subsequence of $\{m_n\}$ tends to an action of $\pi_1(F)$ on an $\R$-tree dual to a compact geodesic measured geodesic lamination $\lambda$.\\

\indent
Thus we say a sequence $\{m_n\}$ of hyperbolic metrics with geodesic boundary on $F$ {\em tends to a lamination $L$} on $F$ if there is a non-peripheral simple closed curve $c\subset F$ such that $\frac{\ell_{m_n}(\partial F)}{\ell_{m_n}(c)}\longrightarrow 0$ and any subsequence contains a further subsequence converging to a projective measured geodesic lamination supported by $L$.\\

\indent
For technical reasons, we will need to associate to a metric $m$ on $F$ a measured geodesic lamination, as was done for closed surfaces. Let $m$ be a hyperbolic metric on $F$ such that $\partial F$ is an union of closed geodesics. Let $P_F$ be a pants decomposition of $F$, so that the connected components of $F-(P_F\cup\partial F)$ are three-holed spheres. (Hence, we do not consider $\partial F$ to be contained in $P_F$.)  Fix $\eps>0$ so that $\ell_m(c)\geq\eps$ for every leaf $c$ of $P_F$. We are especially interested in sequences of metrics on $F$ with arbitrarily short boundary curves. In particular, we cannot simply repeat the construction of ${\cal F}_P(m)$ in each component of $F-P_F$ as we did in the case of a closed surface, as we won't then satisfy the hypotheses of Theorem \ref{compactlam}.

Consider the surface $DF$ obtained by doubling $F$ along its boundary, i.e. $DF$ is obtained by taking $F$ and its mirror image and by identifying the corresponding boundaries of these two surfaces. Endowing $F\subset DF$ with a hyperbolic metric $m$ with geodesic boundary and its mirror image with the mirror image of $m$, we get a complete hyperbolic metric $Dm$ on $DF$. We denote by $\partial F\subset DF$ the multi-curve corresponding to the identified boundaries of $F$ and its mirror image. We consider the pants decomposition $P_F$ and its mirror image; the union yields a multi-curve $DP_F\subset DF$.  There is a natural involution $\tau:DF\rightarrow DF$ that exchanges $F$ with its mirror image. By construction we have $\tau(DP_F)=DP_F$. We complete $DP_F$ into a pants decomposition $P_{DF}$ so that we have $\tau(P_{DF})=P_{DF}$ (see figure \ref{doublepants}). There are two possibilities for a component of $DF-DP_F$ that intersects $\partial F$, and figure \ref{doublepants} shows how to extend $DP_F$ in both cases. 

\begin{figure}[hbtp]
\psfrag{a}{$\partial F$}
\psfrag{b}{$DP_F$}
\psfrag{c}{$P_{DF}$}
\centerline{\includegraphics{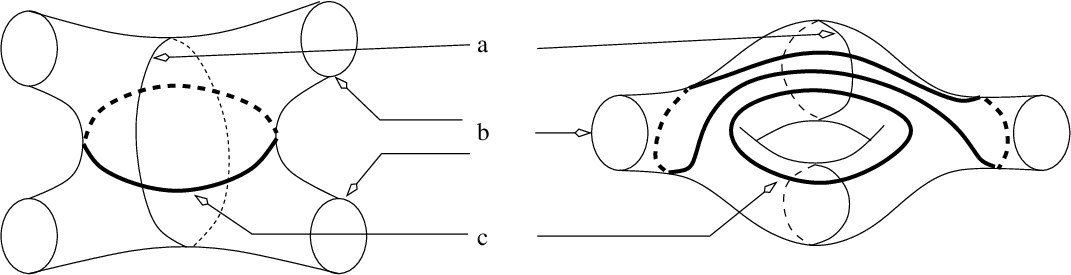}}
\caption{Extending $DP_F$ to a pants decomposition $P_{DF}$}
\label{doublepants}
\end{figure} 

Now we consider the map ${\cal F}_{P_{DF}}:{\cal T}(DF)\rightarrow {\cal ML}(DF)$ associated to $P_{DF}$ as defined in Section \ref{compactification}. When we have $i({\cal F}_{P_{DF}}(Dm),\partial F)=0$, then the restriction of ${\cal F}_{P_{DF}}(m)$ to the copy of $F$ used to construct $DF$ is a measured geodesic lamination ${\cal F}_{P_{DF}}(m)$. The metric $m$ is uniquely defined by ${\cal F}_{P_{DF}}(m)$ and the lengths of the components of $\partial F$. Thus we have a well-defined injective map from the set of metrics on $F$ satisfying $i({\cal F}_{P_{DF}}(Dm),\partial F)=0$ to ${\cal ML}({\rm int} (F))\times\R_+^k$ (where $k$ is the number of components of $\partial F$). 

Let us show that when $\ell_m(\partial F)$ is small enough, we have $i({\cal F}_{P_{DF}}(Dm),\partial F)=0$.

\begin{claim}
Given $\eps >0$, there exists $\eta>0$ depending only on $\eps$ such that if $\ell_m(\partial F)\leq\eta$ then $i({\cal F}_{P_{DF}}(Dm),\partial F)=0$.
\end{claim}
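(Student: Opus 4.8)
The plan is to show that, for $\eta$ small enough depending only on $\eps$, no leaf of the partial measured foliation $\mathcal{F}_{P_{DF}}(Dm)$ crosses $\partial F$; equivalently, that $\partial F$ can be isotoped off the support of the measured geodesic lamination associated with $\mathcal{F}_{P_{DF}}(Dm)$, which is exactly the assertion $i(\mathcal{F}_{P_{DF}}(Dm),\partial F)=0$. First I would localise. Since $\partial F$ is disjoint from every leaf of $DP_F=P_F\cup\overline{P_F}$, it lies in the union of those components of $DF\setminus DP_F$ which are the doubles $D(R_0)$ of the pairs of pants $R_0$ of $F-P_F$ having at least one boundary component contained in $\partial F$; hence $i(\mathcal{F}_{P_{DF}}(Dm),\partial F)$ is the sum over such $R_0$ of $i\bigl(\mathcal{F}_{P_{DF}}(Dm)|_{D(R_0)},\,\partial F\cap D(R_0)\bigr)$, and it suffices to treat one $D(R_0)$ at a time. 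Inside $D(R_0)$ the decomposition $P_{DF}$ consists of the boundary curves of $D(R_0)$, which are leaves of $DP_F$ and thus have $Dm$-length $\geq\eps$, together with the curves $d$ added to complete $DP_F$ as in Figure~\ref{doublepants}. As $R_0$ is a pair of pants and so contains no essential non-peripheral simple closed curve, each added curve $d$ must cross $\partial F$, so by the Collar Lemma $\ell_{Dm}(d)\geq 2w(\eta)$, where $w(\eta)=\operatorname{arcsinh}\bigl(1/\sinh(\eta/2)\bigr)\longrightarrow\infty$ as $\eta\to 0$.

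Next I would pin down the shape of each pair of pants $R'$ of $P_{DF}$ lying inside $D(R_0)$. Such an $R'$ has one boundary curve $d$ of the above kind and two further boundary curves $b_1,b_2$, each a leaf of $DP_F$ or again a curve of type $d'$; once $2w(\eta)\geq\eps$ we have $\ell_{Dm}(b_i)\geq\eps$ in all cases. A component of $\partial F\cap R'$ is an arc separating $b_1$ from $b_2$ in $R'$, so the common perpendicular $k_{b_1b_2}$ of $R'$ crosses it, hence crosses the collar of $\partial F$ and $\ell_{Dm}(k_{b_1b_2})\geq 2w(\eta)$. Substituting into the pair-of-pants seam identity
\[
\cosh\ell_{Dm}(k_{b_1b_2})=\frac{\cosh\tfrac{\ell_{Dm}(d)}{2}+\cosh\tfrac{\ell_{Dm}(b_1)}{2}\cosh\tfrac{\ell_{Dm}(b_2)}{2}}{\sinh\tfrac{\ell_{Dm}(b_1)}{2}\sinh\tfrac{\ell_{Dm}(b_2)}{2}}
\]
and using $\ell_{Dm}(b_i)\geq\eps$, an elementary computation yields an explicit $L(\eps)$ with $L(\eps)\to\infty$ as $\eps\to 0$ such that $2w(\eta)\geq L(\eps)$ forces $\ell_{Dm}(d)>\ell_{Dm}(b_1)+\ell_{Dm}(b_2)$; this is where $\eta$ must be chosen in terms of $\eps$. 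Hence, for $\eta=\eta(\eps)$ small, every such $R'$ falls into the case $\ell_m(l_1)>\ell_m(l_2)+\ell_m(l_3)$ of the construction in Section~\ref{compactification}, with $l_1=d$.

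In that case the partial foliation of $R'$ is supported on $T_{1,2}\cup T_{1,3}\cup T_{1,1}$: the leaves of $T_{1,2}$ (resp. $T_{1,3}$) are arcs parallel to the seam from $d$ to $b_1$ (resp. from $d$ to $b_2$), the leaves of $T_{1,1}$ are arcs all isotopic to the simple perpendicular $k_{1,1}$ from $d$ to $d$ separating $b_1$ from $b_2$, and the trace $T_{1,1}\cap d$ is the arc $A$ of $d$ complementary to $T_{1,2}\cap d$ and $T_{1,3}\cap d$. A component of $\partial F\cap R'$ is an arc from $d$ to $d$ separating $b_1$ from $b_2$, hence isotopic to $k_{1,1}$, hence to a leaf of $T_{1,1}$, and can be pushed onto such a leaf without transversally crossing the leaves of $T_{1,2}$ or $T_{1,3}$, whose endpoints on $d$ lie outside $A$. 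Carrying this out simultaneously in all the pairs of pants of $P_{DF}$ inside $D(R_0)$---the transitions across the added curves $d$ being compatible because the whole construction, together with $\partial F$ and the involution $\tau$, is $\tau$-equivariant---isotopes $\partial F\cap D(R_0)$ to a curve running along leaves of $\mathcal{F}_{P_{DF}}(Dm)$, which therefore picks up zero transverse measure. Summing over the $D(R_0)$ gives $i(\mathcal{F}_{P_{DF}}(Dm),\partial F)=0$.

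The step I expect to be the main obstacle is this last coherence argument: verifying, uniformly over the finitely many combinatorial types of doubled pair of pants depicted in Figure~\ref{doublepants}, that the bands $T_{1,1}$ in pairs of pants adjacent along a curve $d$ line up, so that the local isotopies of $\partial F\cap R'$ onto foliation leaves glue to a global isotopy of $\partial F$. Making this precise will require Collar Lemma estimates on the positions along $d$ of the feet of the perpendiculars $k_{1,2},k_{1,3},k_{1,1}$ and of $\partial F$, together with the $\tau$-symmetry.
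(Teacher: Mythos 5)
Your route is genuinely different from the paper's, and part of it is sound: the hexagon identity you quote is correct, and the computation showing that $\cosh\ell(k_{b_1b_2})-1>2\coth(\ell(b_1)/2)\coth(\ell(b_2)/2)$ forces $\ell_{Dm}(d)>\ell_{Dm}(b_1)+\ell_{Dm}(b_2)$ does work \emph{provided} every component of $\partial F\cap R'$ is an arc separating $b_1$ from $b_2$, which in turn holds provided $b_1$ and $b_2$ are leaves of $DP_F$ (so that $\partial F$ is disjoint from them and every arc of $\partial F\cap R'$ runs from $d$ back to $d$). The paper instead avoids all combinatorial case analysis: it takes the uniform bound $K=K(\eps)$ from \cite{flp} on the length of a leaf segment of ${\cal F}_{P_{DF}}(Dm)$ in a complementary pair of pants, chooses $\eta$ so that the collar of $\partial F$ puts $DP_F$ at distance at least $K+1$ from $\partial F$ (so a leaf segment with an endpoint on $DP_F$ cannot reach $\partial F$), and then uses $\tau$-invariance of the foliation together with the fact that $\partial F$ is fixed pointwise to show that a leaf segment meeting $\partial F$ would have to have both endpoints on $DP_F$.

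The genuine gap in your argument is that the standing hypothesis ``$R'$ has one added boundary curve $d$ and two boundary curves $b_1,b_2$ disjoint from $\partial F$'' fails for one of the two types in Figure \ref{doublepants}. If $R_0$ is a pair of pants of $F-P_F$ with \emph{two} boundary components on $\partial F$, then $D(R_0)$ is a two-holed torus, and any $\tau$-invariant completion of $DP_F$ inside it consists of two curves $d_1,d_2$, both crossing $\partial F$, cutting $D(R_0)$ into a pair of pants with boundary $\{b,\bar b,d_1\}$ and a second one with boundary circles $\{d_1,d_2,d_2\}$. (The alternative splitting into $\{b,d_1,d_2\}$ and $\{\bar b,d_1,d_2\}$ is impossible: $\tau$ would have to exchange the two pieces, yet both interiors meet the pointwise-fixed locus $\partial F$.) In the piece $\{d_1,d_2,d_2\}$ all three boundary circles are added curves crossing $\partial F$: the arcs of $\partial F$ there run between distinct boundary circles and do not separate, so there is no pair of short boundary curves whose common perpendicular is forced across the collar; the trigonometric dichotomy does not single out a longest side $l_1$; and arcs joining distinct boundary circles cannot be isotoped onto leaves of $T_{1,1}$, which join $l_1$ to itself. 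This pair of pants therefore needs a separate argument, and it is exactly the configuration your method cannot see. Together with the coherence problem you flag yourself (gluing the local isotopies across the curves $d$, which is needed even in the four-holed-sphere case), this leaves the proof incomplete rather than merely terse.
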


\begin{proof}
Consider the partial measured foliation ${\cal F}_{P_{DF}}(Dm)$ as originally defined and let $|{\cal F}_{P_{DF}}(Dm)|$ be its support. By \cite[Expos\'e 8]{flp}, there is a uniform bound $K$ on the length of the closure of every leaf of $|{\cal F}_{P_{DF}}(Dm)|\cap (DF-P_{DF})$, depending only on $\eps$. By the Margulis Lemma, we can find $\eta$ so that every component of $DP_F$ is at distance at least $K+1$ from $\partial F$. With this choice of $\eta$, the closure of a leaf of $|{\cal F}_{P_{DF}}(Dm)|\cap (DF-P_{DF})$ with one endpoint on $DP_F$ cannot intersect $\partial F$.

Since $P_{DF}$ is invariant under the action of $\tau$, we have $\tau({\cal F}_{P_{DF}}(Dm))={\cal F}_{P_{DF}}(Dm)$. Furthermore $\partial F$ is fixed pointwise by the action of $\tau$. It follows that the closure of a leaf of $|{\cal F}_{P_{DF}}(Dm)|\cap (DF-P_{DF})$ intersects $\partial F$ only if it has both endpoints in $DP_F$.

Combining these two paragraphs, we conclude that for $\eta$ small enough no leaf of ${\cal F}_{P_{DF}}(Dm)$ crosses $\partial F$. In particular, we have $i({\cal F}_{P_{DF}}(Dm),\partial F)=0$.
\end{proof}

Given a sequence of metrics $\{ m_n\}$ on $F$ such that $\ell_{m_n}(\partial F)\longrightarrow 0$, we choose a pants decomposition $P_F$ (not containing $\partial F$) so that we have $\ell_{m_n}(c)\geq\eps$ for every leaf $c$ of $P_F$ and every $n$. We construct a map ${\cal F}_{P_{DF}}:\{m_n\}\rightarrow {\cal ML}(F)$ as described previously. Taking a subsequence $\{ m_n\}$ such that $\{ {\cal F}_{P_{DF}}(m_n)\}$ converges projectively, we get a projective measured geodesic lamination $[\lambda]$. Since Theorem \ref{compactlam} holds for $Dm_n$ and ${\cal F}_{P_{DF}}(Dm_n)$, it is still true when applied to $m_n$ and ${\cal F}_{P_{DF}}(m_n)$. It follows that $\{m_n\}$ tends to an action on an $\R$-tree which is dual to $[\lambda]$.

A complete metric $m$ on an open surface $F$ can be approximated by a sequence of metrics $m_n$ on the surface $\overline{F}$ obtained by adding a simple closed curve along each cusp of $F$. The metrics $m_n$ have the property that $\partial\overline{F}$ is an union of $m_n$-geodesics whose lengths converge to $0$. Choosing an appropriate pants decomposition $P$ of $F$ (again not containing $\partial F$), one can define measured geodesic laminations ${\cal F}_{P_{DF}}(m_n)$ associated to $\{m_n\}$ as above. It is easy to see that $\{ {\cal F}_{P_{DF}}(m_n)\}$ converges to a measured geodesic lamination ${\cal F}_{P_{DF}}(m)$. Notice that ${\cal F}_{P_{DF}}(m)$ depends on $P$ but not on the choice of the sequence $\{m_n\}$. We have thus defined a map ${\cal F}_{P_{DF}}:{\cal T}(F)\rightarrow {\cal ML}(F)$ for which Theorem \ref{compactlam} holds.

\subsection{Gallimaufries}
\label{gall}

Let $S$ be a compact orientable surface of genus at least $2$, not necessarily connected. A {\em gallimaufry} $\Gamma=(F,m,L)$ on $S$ is made as follows: $F$ is an open incompressible subsurface of $S$, namely if a simple closed curve $d\subset F$ bounds a disc in $S$, $d$ bounds a disc in $F$; $m$ is a complete hyperbolic metric (up to isotopy) on $F$ with finite area, so that in particular a connected component of $F$ endowed with $m$ is either a surface with cusps or a connected component of $S$; and $L$ is a recurrent geodesic lamination on the compact surface $S-F$. We furthermore require that $L$ does not have any closed leaf and that the connected components of $S-(F\cup L)$ are discs or annuli. The surface $F$ is the  {\em moderate surface} of $\Gamma$ and the lamination $L$ forms its {\em immoderate lamination}. We denote by ${\cal GA}(S)$ the set of all gallimaufries on $S$.  As an example, the  end invariants of a hyperbolic $3$-manifold form a gallimaufry: the union of the open surfaces facing the geometrically finite ends forms its moderate surface, equipped with the metric induced by the corresponding conformal structure at infinity; the union of the ending laminations of its geometrically infinite ends is its immoderate lamination. We  now define the topology on ${\cal GA}(S)$.

Let $\{ \Gamma_n=(F_n,m_n,L_n)\}$ be a sequence of gallimaufries on $S$. For each component of $S-F_n$ which is an annulus $A$, we add to $L_n$ an essential simple closed curve that can be homotoped into $A$. Thus we get a new geodesic lamination $L'_n$. We  say that $\{\Gamma_n\}$ converges to $\Gamma_\infty=(F_\infty,m_\infty,L_\infty)$ in ${\cal GA}(S)$ if the following hold:
\begin{enumerate}[i)]
\item for every $n$ we have $F_\infty\subset F_n$ and the restrictions of the $m_n$ to $F_\infty$ converge to $m_\infty$, namely we have $l_{m_n}(\partial\overline{F}_\infty)\longrightarrow 0$ and for every non-peripheral closed curve $c\subset F_\infty$, we have $\ell_{m_n}(c)\longrightarrow \ell_{m_\infty}(c)$,
\item the recurrent part of the Hausdorff limit of every convergent subsequence of $\{L'_n\}$ lies in $L'_\infty$,
\item if a component $L$ of $L_\infty$ lies in infinitely many $F_n$ then the restrictions of the $m_n$ to $S(L)$ tend to $L$.   
\end{enumerate}

A gallimaufry with empty immoderate lamination is simply a point in the Teichm\"uller space of $S$. Thus by defining a gallimaufry we have constructed a bordification of Teichm\"uller space. Notice that ${\cal GA}(S)$ is not compact and not even locally compact.

\begin{lemma}           \label{locom2}
The space ${\cal GA}(S)$ is not locally compact.
\end{lemma}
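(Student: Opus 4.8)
Here is how I would approach the proof.

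The plan is to produce a single gallimaufry with no compact neighbourhood, exploiting the fact that convergence in ${\cal GA}(S)$ is blind to the Fenchel--Nielsen twist along a curve that is being pinched. Fix a non-separating simple closed curve $c$ in (a component of) $S$ and a complete, finite area hyperbolic metric $m_\infty$ on the surface-with-cusps $S-c$, and set $\Gamma_\infty=(S-c,m_\infty,\emptyset)\in{\cal GA}(S)$; this is exactly the shape of the end invariants of a geometrically finite representation with an accidental parabolic along $c$, so it is an honest gallimaufry. For $\eps>0$ and $\theta\in\R$ let $m_{\eps,\theta}\in{\cal T}(S)$ be the hyperbolic metric whose Fenchel--Nielsen coordinates relative to a pants decomposition of $S$ containing $c$ consist of length $\eps$ and twist $\theta$ along $c$, with all other coordinates those of $m_\infty$, and put $\Gamma_{\eps,\theta}=(S,m_{\eps,\theta},\emptyset)\in{\cal T}(S)\subset{\cal GA}(S)$. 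I would first note that ${\cal T}(S)$ is an open subset of ${\cal GA}(S)$ carrying its usual topology: by condition (i) of the definition of convergence, any sequence in ${\cal GA}(S)$ converging to some $(S,m,\emptyset)$ already lies in ${\cal T}(S)$ and converges to $m$ classically.

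The first step is that the twist disappears in the limit: if $\eps_k\to0^{+}$ then $\Gamma_{\eps_k,\theta_k}\to\Gamma_\infty$ for every choice of $\theta_k\in\R$. Conditions (ii) and (iii) are vacuous, since $S-F_n=\emptyset$ and $L_\infty=\emptyset$; and condition (i) holds because $\ell_{m_{\eps_k,\theta_k}}(\partial\overline{S-c})\to0$, while for a non-peripheral $c'\subset S-c$ the length $\ell_{m_{\eps_k,\theta_k}}(c')$ does not depend on $\theta_k$ (its geodesic representative misses $c$, hence a collar of $c$ and the support of the partial twist) and equals the length of $c'$ for the metric on $S-c$ with geodesic boundary of length $\eps_k$ and the prescribed remaining coordinates, which tends to $\ell_{m_\infty}(c')$.

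Next I would take an arbitrary neighbourhood $U$ of $\Gamma_\infty$ and set $A={\cal GA}(S)\setminus U$. If $A$ met $\{\Gamma_{\eps,\theta}:\eps<\delta\}$ for every $\delta>0$, we could choose $\Gamma_{\eps_k,\theta_k}\in A$ with $\eps_k\to0$; by the first step this sequence converges to $\Gamma_\infty$, forcing $\Gamma_\infty\in\overline A=A$, a contradiction. Hence there is $\delta>0$ with $\Gamma_{\eps,\theta}\in U$ for all $\eps\in(0,\delta)$ and all $\theta\in\R$; in particular the line $\Lambda:=\{\Gamma_{\delta/2,\theta}:\theta\in\R\}$ is contained in $U$. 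Since $\theta\mapsto m_{\delta/2,\theta}$ is periodic up to isometry (with period a full Dehn twist along $c$), there is $s_0>0$ bounding below the $m_{\delta/2,\theta}$-length of every essential simple closed curve, uniformly in $\theta$; and since the length of a curve crossing $c$ grows without bound as $|\theta|\to\infty$, the map $\theta\mapsto\Gamma_{\delta/2,\theta}$ is a proper embedding of $\R$ into ${\cal T}(S)$, so that $\Lambda$ is a subspace of ${\cal GA}(S)$ homeomorphic to $\R$. Moreover $\Lambda$ is closed in ${\cal GA}(S)$: if $m_{\delta/2,\theta_k}\to(F,m,L)$ then either $F=S$, so $L=\emptyset$ and the limit lies in ${\cal T}(S)$ and hence in $\Lambda$ (the image of a proper map being closed), or $F\subsetneq S$, in which case condition (i) forces the length of some essential curve to tend to $0$, contradicting the bound $s_0$. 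Therefore $\overline U$ contains $\Lambda$, a closed subset homeomorphic to $\R$; as a compact space has no non-compact closed subset, $\overline U$ is not compact. Since $U$ was arbitrary, $\Gamma_\infty$ has no compact neighbourhood, and ${\cal GA}(S)$ is not locally compact.

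The heart of the matter is the interplay of the first step with the extraction in the third: because the convergence conditions (i)--(iii) are insensitive to twisting along the pinched curve $c$, a whole real line of pairwise distinct gallimaufries $\Gamma_{\delta/2,\theta}$ is squeezed arbitrarily close to $\Gamma_\infty$, and must therefore be swallowed by every neighbourhood of $\Gamma_\infty$; the non-compactness of that line then kills local compactness. The one point needing care is that $\Gamma_\infty$ is a degenerate gallimaufry --- empty immoderate lamination, moderate surface with two cusps --- which is precisely what renders conditions (ii)--(iii) trivial and concentrates everything in the elementary length estimate of condition (i); should a non-degenerate witness be preferred, one can instead take $S$ of genus $\ge 3$, let $L_\infty$ fill a fixed essential subsurface, and run the same twist argument about a curve lying in the moderate part.
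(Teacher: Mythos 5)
Your proof is correct and rests on the same mechanism as the paper's: the topology of ${\cal GA}(S)$ cannot detect Fenchel--Nielsen twisting along a curve whose length tends to zero, so every neighbourhood of the pinched gallimaufry $(S-c,m,\emptyset)$ must swallow an entire non-compact twist family. The paper packages this as a diagonal sequence $\phi^{k_n}_* m_n$ (iterated Dehn twists applied to a pinching sequence) that converges to the limit while escaping any prescribed compact set, whereas you exhibit a closed line $\{\Gamma_{\delta/2,\theta}\}_{\theta\in\R}$ inside each neighbourhood; these are minor variants of the same argument.
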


\begin{proof}

Consider a simple closed curve $c\subset S$ and an embedded annulus $A\subset S$ around $c$. Let $m$ be a complete hyperbolic metric on the open surface $S-A$ and let $\{m_n\}$ be a sequence of complete hyperbolic metrics on $S$ converging to $m$ on $S-A$ in the sense of i) above. In particular since $c$ corresponds to a cusp of $S-A$, we have $l_{m_n}(c)\longrightarrow 0$.

Let $\phi: S\rightarrow S$ be a (right) Dehn twist along $c$. By construction the sequence of gallimaufries $\{(S,m_n,\emptyset)\}$ converges to $(S-A,m,\emptyset)$ and since we have $\ell_{m_n}(c)\longrightarrow 0$, for every fixed $k$, the sequence $\{(S,\phi^k_* m_n,\emptyset)\}$ also converges to $(S-A,m,\emptyset)$. On the other hand if we fix $n$ and let $k\rightarrow\infty$, no subsequence of $\{(S,\phi^k_* m_n,\emptyset)\}$ converges in ${\cal GA}(S)$ for the following reason. First notice that if a sequence of metrics $\{ m_n\}$ converges to a gallimaufry $\Gamma_\infty=(F_\infty,m_\infty,L_\infty)$, it follows from the definitions that we have $l_{m_n}(\partial F_\infty)\longrightarrow 0$. However, the $\phi^k_* m_n$-length of any given simple closed curve is bounded away from $0$, when $n$ is fixed and $k$ varies. It follows that if $\{(S,\phi^k_* m_n,\emptyset)\}$ converges to a gallimaufry $\Gamma_\infty=(F_\infty,m_\infty,L_\infty)$ then we would have $F_\infty=S$ and $L_\infty=\emptyset$. This would mean that the metrics $\{ \phi^k_* m_n\}$ converge, when $n$ is fixed and $k$ varies, which is clearly not the case.

Thus for every given compact set $K$, we can find a sequence $\{k_n\}$ so that the sequence  $\{(S,\phi^{k_n}_* m_n,\emptyset)\}$ eventually exits $K$. On the other hand, the sequence $\{(S,\phi^{k_n}_* m_n,\emptyset)\}$ converges to $(S-A,m,\emptyset)$. Hence we have shown that ${\cal GA}(S)$ is not locally compact.
\end{proof}

\subsection{Double incompressibility}
\label{double incomp}

Let $M$ be a compact $3$-manifold and let $F\subset\partial M$ be a compact surface whose boundary is incompressible. We  say that a geodesic lamination $L\subset F$ is {\em doubly incompressible in $F$} if it contains the support of a measured geodesic lamination $\lambda$ satisfying the following condition: there exists $\eta>0$ such that for every essential disc, annulus or M\"obius band $(E,\partial E)$ properly embedded in $(M,F)$, we have $i(\lambda,\partial E)\geq\eta$.

Notice that if a measured geodesic lamination $L$ is doubly incompressible, then every measured geodesic lamination with support $L$ is also doubly incompressible.

Say that a gallimaufry $\Gamma=(F,m,L)$ is {\em doubly incompressible} if $L$ is doubly incompressible in $\partial M-F$.  The  end invariants of a compact $3$-manifold form a doubly incompressible gallimaufry, see \cite{bouts} and \cite{canbouts}.

We give this definition in the most general case but, since we will only consider orientable manifolds with incompressible boundary, we will not need to consider essential discs or M\"obius bands in the definition of doubly incompressibility. Notice that although an orientable $3$-manifold may contain some essential M\"obius bands, a regular neighborhood of such a band contains an essential annulus and thus for every essential M\"obius band $E\subset M$, there is an essential annulus $A\subset M$ such that $i(\lambda,\partial A)=2i(\lambda,\partial E)$.

\section{Algebraic convergence}
\label{algebric}

In this Section we  prove that the convergence of the  end invariants to a doubly incompressible gallimaufry implies the algebraic convergence of the corresponding representations (up to taking a subsequence).

\begin{proposition}    \label{algcon}
Let $M$ be a compact. orientable, hyperbolizable $3$-manifold with incompressible boundary. Let $\{\rho_n\}\subset {\rm AH}(M)$ be a sequence of representations uniformizing $M$ and let $\Gamma_n=(F_n,m_n,L_n)$ be the  end invariants of $\rho_n (\pi_1(M))$. Assume that $\{\Gamma_n\}$ converges in ${\cal GA}(\partial M)$ to a doubly incompressible  gallimaufry $\Gamma_\infty=(F_\infty,m_\infty,L_\infty)$. Then a subsequence of $\{\rho_n\}$ converges algebraically.
\end{proposition}

This result should be regarded as another extension of Thurston's Double Limit Theorem (\cite{thuii}). The improvement from previous results (for example from \cite{ohshika-lim} which seems to be the closest one) lies in the fact that we are considering the limit of $\{\Gamma_n\}$ in the space of gallimaufries rather than its projective limit (however it may be defined). It is not hard to construct an example where $\{\Gamma_n\}$ converges to a doubly incompressible gallimaufry even though  the projective limit of the end invariants is not doubly incompressible.

We  prove this proposition by contradiction, roughly following the plan of Otal's proof of the Double Limit Theorem, see \cite{meister2}. Assume that no subsequence of $\{\rho_n\}$ converges algebraically. By work of Morgan and Shalen, see \cite{mors1}, a subsequence of $\{\rho_n\}$ tends to a small minimal action of $\pi_1(M)$ on an $\R$-tree ${\cal T}$. Consider a component $L$ of the immoderate lamination $L_\infty$ of $\Gamma_\infty$. Using the assumption that $\{\Gamma_n\}$ converges to $\Gamma_\infty$ we  construct a sequence of laminations sufficiently close to $L$ whose lengths are sufficiently well controlled. We  then deduce from the work of Otal, see \cite{conti}, that $L$ cannot be realized in ${\cal T}$. On the other hand we  see that since $\Gamma_\infty$ is doubly incompressible, at least one component of its immoderate lamination is realized in ${\cal T}$. This  yields the expected contradiction.

\subsection{Cut and paste}  
\label{cut}

Before starting the proof of Proposition \ref{algcon}, we will describe a relatively straightforward cut and paste operation that will be used many times throughout this paper.

Consider a closed orientable surface $S$ and an essential open subsurface $F\subset S$ which is not a pair of pants. Let $c\subset S$ be a  simple closed curve that intersects $F$. We use a classical construction to get a simple closed curve $e\subset F$ that behaves somewhat like $c$.  If $c$ lies in $F$ then we take $e=c$. Otherwise, let $k$ be a component of $c\cap\overline{F}$; it is an arc joining two boundary components of $\overline{F}$ (which may not be distinct). Let ${\cal V}$ be a small neighborhood of the union of $k$ and of the components of $\partial \overline{F}$ containing the endpoints of $k$. The boundary of ${\cal V}\cap F$ contains one or two simple closed curves, depending on whether the endpoints of $k$ lie in different components of $\partial \overline{F}$. Since $F$ is not a pair of pants, at least one of these curves is not peripheral. Let $e$ be such a simple closed curve, namely $e$ is freely homotopic to a component of $\partial {\cal V}$ and is not peripheral in $\overline{F}$.

By construction we have:

\begin{claim}
The simple closed curve $e$ satisfies the inequalities $\ell_{s}(e)\leq 2 \ell_{s}(c)+\ell_{s}(\partial\overline{F})$ and $i(e,c)\leq i(c,\partial\overline{F})$.\hfill $\Box$
\end{claim}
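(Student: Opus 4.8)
The plan is to read off both inequalities directly from the explicit construction of $e$, after disposing of the trivial case $c\subset F$ (where $e=c$, so $\ell_s(e)=\ell_s(c)\le 2\ell_s(c)+\ell_s(\partial\overline F)$ because lengths are nonnegative, and $i(e,c)=i(c,c)=0\le i(c,\partial\overline F)$). So assume $c\not\subset F$. Since $\ell_s$ and $i(\cdot,\cdot)$ are homotopy invariants and the isotopy class of $e$ depends only on that of $c$, I may replace $c$ and $\partial\overline F$ by their $s$-geodesic representatives; these are then automatically in minimal position, $|c\cap\partial\overline F|=i(c,\partial\overline F)$, and the arc $k$ of the construction is a geodesic subarc of $c$, so $\ell_s(k)\le\ell_s(c)$. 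Write $p_1\in\partial_1$, $p_2\in\partial_2$ for the endpoints of $k$, where $\partial_1,\partial_2$ are the (possibly equal) components of $\partial\overline F$ containing them.

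For the length bound I would exhibit a loop freely homotopic to $e$ whose length is easy to estimate. The curve $e$ is a non-peripheral cuff of the pair of pants $W=\overline{{\cal V}\cap F}$, a regular neighbourhood in $\overline F$ of $\partial_1\cup\partial_2\cup k$; the existence of such a cuff is exactly the observation used in the construction, since if every cuff of $W$ were peripheral in $\overline F$ then $\overline F$ would itself be a pair of pants, contrary to hypothesis. When $\partial_1\ne\partial_2$, the cuffs of $W$ are $\partial_1$, $\partial_2$ and $e$, and the boundary relation of a pair of pants shows $e$ is freely homotopic to the loop running once around $\partial_1$, once along $k$, once around $\partial_2$ and once back along $k$; hence $\ell_s(e)\le\ell_s(\partial_1)+\ell_s(\partial_2)+2\ell_s(k)\le\ell_s(\partial\overline F)+2\ell_s(c)$. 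When $\partial_1=\partial_2=\partial_0$, the two non-$\partial_0$ cuffs of $W$ are freely homotopic to $\alpha\cup k$ and $\beta\cup k$, where $\alpha,\beta$ are the two subarcs into which $p_1,p_2$ cut $\partial_0$; either way $\ell_s(e)\le\ell_s(\partial_0)+\ell_s(k)\le\ell_s(\partial\overline F)+\ell_s(c)$. In all cases $\ell_s(e)\le 2\ell_s(c)+\ell_s(\partial\overline F)$.

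For the intersection bound I would put $c$ in minimal position with $\partial W$ and count $|c\cap e|$. Inside $W$ the curve $c$ consists of the arc $k$, which runs from the cuff $\partial_1$ to the cuff $\partial_2$ and is disjoint from $e$, together with one small arc crossing a collar for each point of $c\cap(\partial_1\cup\partial_2)$ other than $p_1,p_2$, and each such small arc meets $e$ at most once; hence $i(e,c)\le|c\cap e|\le|c\cap(\partial_1\cup\partial_2)|\le|c\cap\partial\overline F|=i(c,\partial\overline F)$. The one step that requires genuine care — and the main obstacle — is the local picture at the endpoints $p_1,p_2$ of $k$: one must check that there the strand of $e$ running parallel to $\partial_i$ detours around the tube about $k$, so that the arc of $c$ that enters $F$ along $k$ passes through the resulting gap and contributes nothing to $|c\cap e|$. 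The same verification is what handles the degenerate case $\partial_1=\partial_2$, where $e$ is one of the two candidate cuffs of $W$.
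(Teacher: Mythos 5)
Your argument is correct and is exactly the reading-off-from-the-construction that the paper intends: the paper states the claim with no proof beyond ``by construction,'' and your case analysis ($\partial_1\neq\partial_2$ versus $\partial_1=\partial_2$, the representative of $e$ built from push-offs of $k$ and of arcs of $\partial\overline{F}$, and the corner analysis at $p_1,p_2$ showing the strand of $c$ entering along $k$ contributes nothing to $|c\cap e|$) supplies precisely the missing details. The one step you flag as delicate is in fact handled by the local picture you describe, so nothing further is needed.
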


Consider now a sequence of simple closed curves $\{ c_n\}\subset S$ and the sequence of simple closed curves $\{ e_n\}\subset F$ produced by the operation above. Extract subsequences so that $\{ c_n\}$ and $\{ e_n\}$ converge in the Hausdorff topology to geodesic laminations $C$ and $E$ respectively. Assuming that a minimal sublamination $L$ of $C$ fills $F$, we have:

\begin{claim}   \label{limit}
The lamination $L$ is a sublamination of $E$.
\end{claim}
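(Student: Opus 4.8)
The goal is to show that the minimal sublamination $L$ of $C$ lying in $F$ is also contained in the Hausdorff limit $E$ of the curves $e_n$. The natural strategy is to track, for each $n$, a piece of $c_n$ that stays deep inside $F$ and compare it with the curve $e_n$ produced by the cut-and-paste. First I would fix a leaf $\ell$ of $L$ and a long subarc $\ell_0$ of it; since $L$ is a minimal sublamination of $C$ and $c_n \to C$ in the Hausdorff topology, for large $n$ the curve $c_n$ has a subarc $k_n$ that is uniformly close to $\ell_0$ (Hausdorff convergence of closed sets gives approximation of any compact piece of the limit). The key point is that such a subarc $k_n$, lying well inside $F$ and far from $\partial\overline{F}$, is untouched by the cut-and-paste operation: recall that $e_n$ is obtained from $c_n$ by modifying only near a component $k$ of $c_n\cap\overline F$ that is an arc meeting $\partial\overline F$, together with a small neighbourhood ${\cal V}$ of $k\cup\partial\overline F$. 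So if $k_n$ is chosen to avoid ${\cal V}$ — which it does once it is deep enough inside $F$ — then $k_n$ (or a subarc of it of comparable length) survives as a subarc of $e_n$, possibly as one of the arcs ``$A$'' in the boundary of ${\cal V}\cap F$.

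There is a subtlety: the cut-and-paste replaces $c_n$ by a curve $e_n$ that runs along $c_n$ but ``short-cuts'' across the collar annuli when $c_n$ exits $F$. A single component of $c_n\cap\overline F$ that is close to $\ell_0$ might itself be short-cut if both its endpoints happen to lie on $\partial\overline F$; to handle this I would instead argue that the long geodesic arcs of $c_n$ shadowing $\ell_0$ appear, after the modification, as geodesic arcs of $e_n$ that shadow $\ell_0$ up to an error going to zero — using that the modification only reroutes $c_n$ inside the $\eta$-collars of $\partial\overline F$, whose width is bounded, and that $\ell_0$ can be taken arbitrarily far from those collars. Concretely, pass to geodesic representatives: the geodesic lamination $E$ contains, for each fixed compact $\ell_0\subset L$, an arc within Hausdorff distance $\epsilon_n\to 0$ of $\ell_0$. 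Letting $\ell_0$ exhaust the leaf $\ell$ and taking a diagonal limit, the whole leaf $\ell$ lies in $E$; since this holds for every leaf of $L$ and $E$ is closed, $L\subset E$, as $L$ is the closure of the union of its leaves.

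The main obstacle, I expect, is making precise the claim that a deep subarc of $c_n$ really does persist into $e_n$. The cut-and-paste is defined one component of $c_n\cap\overline F$ at a time (and the statement as written describes producing a single curve $e$ from a single arc $k$), so for a sequence one must be careful about which arc $k_n$ is used at each stage and whether the resulting $e_n$ still contains the shadow of $\ell_0$. The cleanest route is probably to observe that $e_n$ is freely homotopic into an arbitrarily small neighbourhood of $c_n\cup\partial\overline F$ (by construction $e_n$ is freely homotopic to a component of $\partial{\cal V}$), so the geodesic representative of $e_n$ Hausdorff-converges to something contained in $C\cup\partial\overline F$; intersecting with the part of $F$ far from $\partial\overline F$, and using that $L$ is minimal (hence cannot be separated from the rest of $C\cup\partial\overline F$ except by leaving it entirely), forces $L\subseteq E$. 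I would also invoke the two inequalities of the preceding Claim — $\ell_s(e_n)\le 2\ell_s(c_n)+\ell_s(\partial\overline F)$ and $i(e_n,c_n)\le i(c_n,\partial\overline F)$ — to control the geometry and to rule out $e_n$ degenerating onto $\partial\overline F$ alone, which is the one way the argument could fail.
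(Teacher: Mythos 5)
Your main argument rests on a misreading of the cut-and-paste construction of Section \ref{cut}. The curve $e$ is \emph{not} obtained by rerouting $c$ inside the collars of $\partial\overline{F}$ each time $c$ leaves $F$: it is a non-peripheral component of the boundary of ${\cal V}\cap F$, where ${\cal V}$ is a neighbourhood of one \emph{single} component $k$ of $c\cap\overline{F}$ together with the one or two components of $\partial\overline{F}$ meeting $k$. So $e_n$ consists, up to isotopy, of one or two parallel copies of the chosen arc $k_n$ plus arcs running along $\partial\overline{F}$; the \emph{other} components of $c_n\cap\overline{F}$ --- in particular the long arcs of $c_n$ shadowing a fixed compact piece $\ell_0$ of a leaf of $L$ --- do not survive into $e_n$ at all unless $k_n$ happens to be one of them. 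The step ``a deep subarc of $c_n$ persists into $e_n$'' is therefore exactly what fails; the worry you raise about which arc $k_n$ is used is the heart of the matter, not a technicality. The shadowing argument does go through when $k_n$ is chosen to accumulate on $L$ (and this is how the construction is actually fed specific arcs $k_n$ in the proof of Lemma \ref{realise}), but it is not available for an arbitrary choice of component.

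Your fallback route is much closer to what the paper does, but you leave it unexecuted and misidentify the role of the key inequality. The paper's proof is purely an intersection-number computation: choose weights $u_n\to 0$ so that $u_n c_n\cap F$ converges in ${\cal ML}$ to a lamination $\lambda$ supported on $L$; then $u_n\, i(c_n,\partial\overline{F})\to 0$, so the inequality $i(e_n,c_n)\le i(c_n,\partial\overline{F})$ gives $u_n\, i(c_n,e_n)\to 0$; taking bounded weights $v_n$ with $v_n e_n\to\mu$ (whose support lies in $E$), continuity of $i$ yields $i(\lambda,\mu)=0$, so $\mu$ does not cross $L$, and one concludes $L\subset E$. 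Thus $i(e_n,c_n)\le i(c_n,\partial\overline{F})$ is the engine of the whole proof, not merely a device ``to rule out $e_n$ degenerating onto $\partial\overline{F}$'' --- and ruling out that single degeneration would not suffice anyway, since a priori $E$ could be a lamination in $\overline{F}$ disjoint from $L$ without being $\partial\overline{F}$. Note that even the intersection-number route needs one further input to upgrade ``$\mu$ does not cross $L$'' to ``$L\subset|\mu|$'', namely that $e_n$ runs along the arc $k_n$ and that these arcs accumulate on $L$; so in either approach the choice of $k_n$ cannot be ignored.
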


\begin{proof}
Put a weight $u_n$ on $c_n$ so that $\{u_n c_n\cap F\}$ converges to a measured geodesic lamination $\lambda$ supported by $L$. With such weights, we have $i(u_n c_n,\partial \overline{F})\longrightarrow 0$. Since we have $i(e_n,c_n)\leq i(c_n,\partial\overline{F})$  we get $i(u_n c_n,e_n)\longrightarrow 0$. Put a weight $v_n$ on $e_n$ so that $\{ v_n e_n\}$ converges to a measured geodesic lamination $\mu$. The support of $\mu$ is a sublamination of $E$. Since $\{ v_n\}$ is bounded, we have $i(\lambda,\mu)=\lim i(u_n c_n,v_n e_n)\longrightarrow 0$. Since we have assumed that the support $L$ of $\lambda$ fills $F$, then $\lambda$ and $\mu$ have the same support and $L$  is a sublamination of $E$.
\end{proof}  

Using this Claim, we will describe the behavior of a sequence of simple closed curves with bounded length in a sequence of metrics that degenerates.

\begin{lemma}    \label{shortdegen}
Let $\{m_n\}$ be a sequence of hyperbolic metrics on a closed surface $S$. Let $F\subset S$ be an incompressible subsurface such that the restrictions of the $\{m_n\}$ to $F$ converge to a geodesic lamination $L$ (in the sense of Section \ref{withboundary}) that fills $F$. Let $\{ c_n\}$ be a sequence of simple closed curves on $S$ such that $\{ \ell_{m_n}(c_n)\}$ is a bounded sequence and that $\{ c_n\}$ intersects $S(L)$ for $n$ large enough. Extract a subsequence such that $\{c_n\}$ converges in the Hausdorff topology to a geodesic lamination $C$. Then $L$ is a sublamination of $C$.  
\end{lemma}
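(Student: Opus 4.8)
The plan is to reduce this statement to Claim \ref{limit} by a cut-and-paste argument, exactly as the surrounding text suggests. First I would apply the construction of Section \ref{cut} to the sequence $\{c_n\}$ and the subsurface $F$, producing simple closed curves $e_n\subset F$ together with the estimates of the first Claim of Section \ref{cut}, namely $\ell_{m_n}(e_n)\leq 2\ell_{m_n}(c_n)+\ell_{m_n}(\partial\overline F)$ and $i(e_n,c_n)\leq i(c_n,\partial\overline F)$. Since by hypothesis $\{m_n\}$ restricted to $F$ tends to a lamination $L$ in the sense of Section \ref{withboundary}, we have in particular $\ell_{m_n}(\partial\overline F)\longrightarrow 0$, so that $\{\ell_{m_n}(e_n)\}$ is bounded (using the boundedness of $\{\ell_{m_n}(c_n)\}$).

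The next step is to control the $e_n$ in Teichm\"uller-theoretic terms on $F$. Since the restrictions of $\{m_n\}$ to $F$ tend to $L$, Theorem \ref{compactlam} (applied through the doubled-surface construction of Section \ref{withboundary}, which makes that theorem available for metrics on $F$ with short boundary) gives, after putting a suitable weight $w_n\longrightarrow 0$ on $e_n$, that $\{w_n e_n\}$ converges in ${\cal ML}(F)$ to a measured geodesic lamination $\mu$ whose support is contained in $L$; moreover, because $\{c_n\}$ (hence $\{e_n\}$, by Claim \ref{limit}-type reasoning — more precisely because $c_n$ crosses $S(L)$ so $e_n$ is nontrivial and crosses $S(L)$ as well for $n$ large) crosses the part of $F$ carrying $L$, the limit $\mu$ is nonzero and its support is in fact all of a minimal sublamination of $L$. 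Then I would extract a subsequence so that $\{e_n\}$ also converges in the Hausdorff topology to a geodesic lamination $E$; the support of $\mu$ is a sublamination of $E$, and since that support meets every minimal piece of $L$ (running the argument componentwise over the minimal sublaminations of $L$) we get that the recurrent part of $E$ contains $L$. Combined with boundedness of $\{w_n^{-1}\}$ being false — rather, one uses that $w_n\to 0$ only to normalize — the essential output is: a minimal sublamination of $L$ appears in the Hausdorff limit $E$ of $\{e_n\}$.

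Finally I would run the argument of Claim \ref{limit} in the reverse direction relative to $C$. We have $i(w_n e_n, c_n)\leq w_n\, i(c_n,\partial\overline F)$; choosing the original weights so that $\{c_n\}$ with weights $u_n$ converges to a measured lamination supported on a given minimal sublamination $L_0$ of $L$ — wait, the cleaner route: put weights $v_n\longrightarrow 0$ on $c_n$ so that $\{v_n c_n\}$ converges to a measured geodesic lamination $\nu$ with support a minimal sublamination of $C$ and crossing $S(L)$; then since $\ell_{m_n}(c_n)$ is bounded and $\{m_n|_F\}\to L$, the intersection $i(\nu, \cdot)$ against anything carried by $L$ vanishes, forcing $\nu$ and $\mu$ to have a common minimal piece, hence $L$ (being minimal on each component of $S(L)$ by the definition of tending to a lamination, and using that all of $L$ is forced) is a sublamination of $C$. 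To finish it suffices to treat each connected component of $L$ separately: the hypothesis that $\{c_n\}$ intersects $S(L)$ guarantees, via the estimate $i(v_n c_n, e_n)\to 0$ together with the crossing hypothesis, that no minimal piece of $L$ can be disjoint from $C$, and minimality of the pieces of $L$ then gives containment.

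The main obstacle I anticipate is precisely the bookkeeping needed to guarantee that \emph{all} of $L$ — not merely some minimal sublamination of it — lands in $C$. This is where the definition of "tends to a lamination" from Section \ref{compactification} must be used carefully: that definition only says subsequential projective limits are \emph{supported} by $L$, so if $L$ has several minimal components one must argue that $\{c_n\}$, having bounded length and crossing $S(L)$, is eventually forced to run alongside each of them — the cut-and-paste curve $e_n$ extracted near any given component $L_i$ of $L$ must, by the length bound on $e_n$ and Theorem \ref{compactlam} applied on $S(L_i)$, limit onto $L_i$, and then Claim \ref{limit} transports $L_i$ into $C$. Handling degenerate cases (when $F$ or a component of $S(L)$ is a pair of pants, when $c_n$ happens to lie in $F$, when $c_n$ is peripheral in $\overline F$) is routine but must be noted.
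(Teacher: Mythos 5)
Your overall strategy --- cut-and-paste to produce curves $e_n$ of bounded length inside the degenerating subsurface, then play the two estimates $i(e_n,c_n)\leq i(c_n,\partial\overline{\,\cdot\,})$ and $i(e_n,{\cal F}_P(m_n))\leq \ell_{m_n}(e_n)$ (via the doubled surface and Theorem \ref{compactlam}) against each other --- is the paper's. But two steps, as you have written them, would fail. First, you perform the cut-and-paste in $\overline F$ rather than in $\overline{S(L)}$. This matters twice over: (i) the construction takes an \emph{arbitrary} component $k$ of $c_n\cap\overline F$, so the resulting $e_n\subset F$ need not meet $S(L)$ at all, in which case no rescaled limit of $\{e_n\}$ sees $L$; and (ii) even when $e_n$ meets $S(L)$, the deduction that a rescaled limit $\mu$ of $\{e_n\}$ has support contained in $L$ follows from $i(\mu,L)=0$ only because every non-peripheral simple closed curve of $S(L)$ crosses $L$ --- this filling property holds in $S(L)$ by construction of the embraced surface but fails in $F$ in general, so in $F$ the limit $\mu$ could sit entirely in $F\setminus S(L)$, disjoint from $L$. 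The hypothesis that $c_n$ meets $S(L)$ is there precisely so that one can take $k$ to be a component of $c_n\cap\overline{S(L)}$ and run the whole construction inside $\overline{S(L)}$, as the paper does; your final paragraph gestures at this (``Theorem \ref{compactlam} applied on $S(L_i)$'') but the body of your argument is set in the wrong subsurface.

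Second, the concluding inference ``$i(\nu,\mu)=0$ \dots forcing $\nu$ and $\mu$ to have a common minimal piece'' is not valid: vanishing intersection number only rules out transverse crossing and is perfectly compatible with $|\nu|$ and $|\mu|$ being disjoint. (Relatedly, you cannot ``choose the weights so that $v_nc_n$ converges to a lamination crossing $S(L)$''; the rescaled limit of $c_n$ is whatever it is.) What actually closes the argument is the conjunction of: (a) $|\mu|=L$ exactly, obtained inside $S(L)$ from the vanishing of the intersection of $\mu$ with the limit of the rescaled $v_n{\cal F}_P(Dm_n)$ on the doubled surface, using again that $L$ fills $S(L)$; and (b) the mechanism of Claim \ref{limit}, which uses that $e_n$ contains a sub-arc of $c_n$ together with the estimate $i(e_n,c_n)\leq i(c_n,\partial\overline{S(L)})$, whose right-hand side dies under the rescaling, to place $|\mu|$ inside the Hausdorff limit $C$. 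With the construction moved into $\overline{S(L)}$ and the conclusion drawn as in (a) and (b), your outline becomes the paper's proof.
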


\begin{proof}
Using the cut and paste construction described above, we get a sequence of simple closed curves $\{ e_n\subset S(L)\}$ with $\ell_{m_n}(e_n)\leq 2 \ell_{m_n}(c_n)+\ell_{m_n}(\partial\overline{S(L)})$ and $i(e_n,c_n)\leq i(c_n,\partial\overline{S(L)})$. Consider a sequence $\{ u_n\}$ converging to $0$ such that the sequence $\{u_n e_n\}\subset{\cal ML}(S(L))$ converges to a measured geodesic lamination $\lambda$. By Claim \ref{limit}, the support of $\lambda$ is a sublamination of $C$.

Consider the double $D\overline{S(L)}$ of $\overline{S(L)}$ as defined in Section \ref{withboundary} and the metric $Dm_n$ on  $D\overline{S(L)}$ induced by the restriction of $m_n$ to $\overline{S(L)}$. Choose a pants decomposition $P$ of $D\overline{S(L)}$ as in Section \ref{withboundary} such that there is $\eps>0$ for which we have that $\ell_{Dm_n}(d)\geq\eps$ for every $n$ and every leaf $d$ of $P$ and that $P$ is invariant under the natural involution $\tau$ of $D\overline{S(L)}$. We have a measured geodesic lamination ${\cal F}_P(Dm_n)$ defined as in Section \ref{compactification}.

Extract a subsequence such that  $\{ |{\cal F}_P(Dm_n)|\}$ converges in the Hausdorff topology to a geodesic lamination $L'$ and we have $\tau(L')=L'$. Let us show that any component $d$ of $\partial\overline{S(L)}$ which is a leaf of $L'$ is an isolated leaf. Otherwise $L'$ would contain leaves spiraling toward $d$. Since $\tau(L')=L'$, $L'$ would contain leaves on both sides of $d$ spiraling in the same direction towards $d$. Such behavior cannot happen in a Hausdorff limit of measured geodesic laminations. It follows that any component $d$ of $\partial\overline{S(L)}$ which is a leaf of $L'$ is eventually a leaf of $\{ |{\cal F}_P(Dm_n)|\}$.

 We remove from ${\cal F}_P(Dm_n)$ every leaf that is a component of $\partial\overline{S(L)}$. Since the restrictions of the $m_n$ to $S(L)$ tend to $L$, up to extracting a subsequence there are $v_n\longrightarrow 0$ such that $\{ v_n {\cal F}_P(Dm_n)\}$ converges to a measured geodesic lamination $D\mu$. It follows from the previous paragraph that $D\mu$ is disjoint from $\partial\overline{S(L)}$. Since the restriction of $m_n$ to $F$ tends to $L$, the support of $D\mu$ is the ``double'' of $L$. By definition, we have $v_nl_{Dm_n}(\partial\overline{S(L)})\longrightarrow i(D\mu,\partial\overline{S(L)}))=0$. We denote by $e_n$ the curve defined by $e_n$ on the copy of $\overline{S(L)}$ comprising $D\overline{S(L)}$. From Theorem \ref{compactlam}, we get $i(e_n,v_n{\cal F}_P(Dm_n))\leq v_n l_{m_n}(c_n)+2v_n l_{m_n}(\partial\overline{S(L)})\longrightarrow 0$. It follows that $i(\lambda,D\mu)=\lim u_nv_n i(e_n,{\cal F}_P(Dm_n)) =0$. Hence the support of $\lambda$ is $L$. Now from the first paragraph, we can conclude that $L$ is a sublamination of $C$.
\end{proof}

\subsection{Length and realization}
\label{length realize}

Consider a component $L$ of the immoderate lamination $L_\infty$ of $\Gamma_\infty$. In this Section, we  show that $L$ cannot be realized in ${\cal T}$. To do that, we  construct a sequence of geodesic laminations with controlled lengths which are close enough (in a sense to be made precise) to $L$. We start by roughly approximating $L$ by simple closed curves of bounded length, using Lemma \ref{shortdegen}.

\begin{claim}     \label{boundedsequence}
There is a sequence $\{ c_n\}$ of simple closed curves on $S$ such that $\{ \ell_{\rho_n}(c_n^*)\}$ is bounded and so that, up to extracting a subsequence, $\{ c_n\}$ converges in the Hausdorff topology to a geodesic lamination containing $L$.
\end{claim}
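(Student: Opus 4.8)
The plan is to fix a component $L$ of the immoderate lamination $L_\infty$ of $\Gamma_\infty$, to consider the subsurface $S(L)\subset S$ embraced by $L$ (recall that $L$ fills $S(L)$), and to distinguish two cases according to whether $L$ stays moderate along the sequence: either (A) $L\subset F_n$ (equivalently $S(L)\subset F_n$) for infinitely many $n$, or (B) $L$ meets $S-F_n$ for all large $n$, so that $L$ lies, at least partly, in the immoderate part of $\rho_n$. In both cases I would produce simple closed curves of bounded $\rho_n$-length whose Hausdorff limit contains $L$, the concluding step being an application of Lemma \ref{shortdegen}.

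In case (A) I would pass to a subsequence with $S(L)\subset F_n$ for all $n$. Clause (iii) in the definition of convergence in ${\cal GA}(S)$ then says that the restrictions $m_n|_{S(L)}$ tend to $L$, and since $L$ is a non-empty lamination this forces the sequence $\{m_n|_{S(L)}\}$ to leave every compact subset of the Teichm\"uller space of $S(L)$; hence its $m_n$-systoles $\gamma_n\subset S(L)$ satisfy $\ell_{m_n}(\gamma_n)\longrightarrow 0$. As $m_n$ is the conformal structure at infinity of the geometrically finite end of $M_{\rho_n}$ facing $S(L)$, the standard uniform comparison between the conformal boundary and the induced metric on the boundary of the convex core $C_{\rho_n}$ (Sullivan, Epstein--Marden, Bridgeman), together with the fact that $\gamma_n^*$ is the shortest loop in its free homotopy class, gives $\ell_{\rho_n}(\gamma_n^*)\leq K\,\ell_{m_n}(\gamma_n)\longrightarrow 0$ for a universal $K$ (and $\ell_{\rho_n}(\gamma_n^*)=0$ when $\rho_n(\gamma_n)$ is parabolic); in particular $\{\ell_{\rho_n}(\gamma_n^*)\}$ is bounded. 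Applying Lemma \ref{shortdegen} --- in the double $D\overline{S(L)}$ if one wants a closed ambient surface, with subsurface $S(L)$, metrics induced by the $m_n$, and curves $\gamma_n$ --- then yields that $L$ is a sublamination of the Hausdorff limit $C$ of $\{\gamma_n\}$, and since the $\gamma_n$ lie in $S(L)\subset S$ this is also their Hausdorff limit in $S$. Thus $c_n:=\gamma_n$ works.

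In case (B) I would first extract from the structure of end invariants and the hypothesis $\Gamma_n\to\Gamma_\infty$ the fact that $L$ is, along the relevant subsequence, the Hausdorff limit of components $L_n^{j}$ of the ending laminations $L_n$ of geometrically infinite ends $E_n^{j}$ of $M_{\rho_n}$. For each such $n$, Thurston's construction of the ending lamination of $E_n^{j}$ (see \cite{notes}), in the refined form due to Bonahon \cite{bouts}, provides simple closed curves $c\subset S$ whose geodesic representatives $c^{*}$ exit every compact subset of $E_n^{j}$, which are realized on pleated surfaces pushed into the end so that $\ell_{\rho_n}(c^{*})$ is bounded by a universal constant, and which Hausdorff-converge to $L_n^{j}$ as $c^{*}$ is pushed deeper. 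Choosing $c_n$ to be such a curve within Hausdorff distance $1/n$ of $L_n^{j}$ and diagonalising, the convergence $L_n^{j}\longrightarrow L$ gives $c_n\longrightarrow L$ in the Hausdorff topology with $\{\ell_{\rho_n}(c_n^{*})\}$ bounded. (Alternatively, paralleling case (A), one may realize $S(L)$ by pleated surfaces mapping deep into $E_n^{j}$, pull their hyperbolic metrics back to a sequence $\sigma_n$ tending to $L$, take $c_n$ to be a $\sigma_n$-systole --- so that $\ell_{\rho_n}(c_n^{*})\leq\ell_{\sigma_n}(c_n)\longrightarrow 0$ --- and conclude with Lemma \ref{shortdegen}.) Combining the two cases proves the Claim.

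I expect case (A) to be essentially routine once the comparison between the conformal boundary and the convex-core boundary is invoked. The main obstacle should be case (B): extracting from convergence in ${\cal GA}(S)$ the precise sense in which a component $L$ of the limiting immoderate lamination is approximated by the ending laminations of the $M_{\rho_n}$ --- with the inclusion in the correct direction, so that the approximating curves accumulate on \emph{all} of $L$ and not merely near it --- and controlling the pleated surfaces exiting the geometrically infinite ends uniformly in $n$. This should follow from unwinding clause (ii) of the definition of convergence together with Thurston's and Bonahon's analysis of geometrically infinite ends.
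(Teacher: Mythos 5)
Your overall architecture --- the dichotomy according to whether $L$ stays in the moderate surface $F_n$, the use of Lemma \ref{shortdegen} together with an Ahlfors--Sullivan comparison in the moderate case, and Bonahon's exiting curves plus a diagonal extraction in the immoderate case --- is exactly the paper's. There is, however, one step that is false and recurs in both of your cases: the assertion that because $m_n|_{S(L)}$ tends to the filling lamination $L$ (equivalently, leaves every compact subset of Teichm\"uller space), the systoles satisfy $\ell_{m_n}(\gamma_n)\longrightarrow 0$. Divergence in Teichm\"uller space does not force short curves: the orbit $\{\phi^n_* m_0\}$ of a pseudo-Anosov $\phi$ tends to its stable lamination while its systole is constant, and likewise pleated surfaces exiting a bounded-geometry degenerate end have injectivity radius bounded below. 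What is true, and all that Lemma \ref{shortdegen} requires, is that $\ell_{m_n}(c_n)$ be \emph{bounded} and that $c_n$ meet $S(L)$; the paper secures this via Bers' theorem, taking pants decompositions $P_n$ of the complete finite-area surfaces $F_n$ with $\ell_{m_n}(P_n)$ uniformly bounded and selecting a leaf that meets $S(L)$. That route is preferable to the systole of the bordered subsurface $(\overline{S(L)},m_n)$, whose uniform boundedness is itself not immediate when the boundary lengths are not controlled. With ``tends to $0$'' replaced by ``is bounded'' your case (A) becomes the paper's first case (the comparison $\ell_{\rho_n}(c^*)\leq 2\ell_{m_n}(c)$ is cited there to Ahlfors).

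In case (B) your main line --- curves of uniformly bounded $\rho_n$-length Hausdorff-close to a component $L_n^{j}$ of the ending data, followed by a diagonal argument using $L_n^{j}\longrightarrow L$ --- is what the paper does, and you correctly identify the delicate point, namely unwinding convergence in ${\cal GA}(\partial M)$ to get $L$ \emph{inside} the Hausdorff limit of the $L'_n$ (the paper asserts this with no more detail than you offer). Two further remarks: your parenthetical alternative via systoles of pleated-surface metrics suffers from the same false ``$\to 0$'' claim and should likewise be downgraded to a boundedness statement; and you omit the sub-case in which the component of $L'_n$ accumulating on $L$ is one of the closed curves added over an annulus or M\"obius band component of $\partial_{\chi<0}M-F_n$. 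In that sub-case the curve is parabolic, its $\rho_n$-length is $0$ by convention, and one simply takes $c_n$ to be that curve, as the paper notes; it is a one-line fix, but your dichotomy as written does not cover it.
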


\begin{proof}
We first assume that  $L\subset F_n$ for $n$ sufficiently large, where $F_n$ is the moderate surface of $\Gamma_n$. It is a classical result of  Bers \cite{bers}  that there are pants decompositions $P_n$ of the $F_n$  so that $\{\ell_{m_n}(P_n)\}$ is a bounded sequence.  Extract a subsequence such that $\{ P_n\}$ converges in the Hausdorff topology to a geodesic lamination $P_\infty$. Since $P_n$ is a pants decomposition of $F_n$ and since $F$ is an incompressible subsurface of $F_n$, for every $n$ there is a leaf $c_n$ of $P_n$ that intersects $F$. By assumption, $\{ \ell_{m_n}(c_n)\}$ is bounded. Furthermore $\{ c_n\}$ converges in the Hausdorff topology to a sublamination $C$ of $P_\infty$. By Lemma \ref{shortdegen}, $L$ is a sublamination of $C$. By \cite{bers inequality}, we have $\ell_{\rho_n}(c_n^*)\leq 2\ell_{m_n}(c_n)$. In particular $\{ \ell_{\rho_n}(c_n^*)\}$ is bounded.

If we have $L\not\subset F_n$ then $L$ lies in the Hausdorff limit of every convergent subsequence of $\{L'_n\}$ where $L'_n$ is obtained by adding to $L_n$ a curve in each component of $\partial_{\chi<0} M-F_n$ which is an annulus. By the definition of $L'_n$, if a component $B_n$ of $L'_n$ intersecting $S(L)$ is not a closed leaf, it is an ending lamination of $\rho_n$. In this case, there are curves $c_k$ such that  $\ell_{\rho_n}(c_k^*)\leq Q$ for some $Q$ depending only on $\partial M$, such that $\{ c_k\}$ converges in the Hausdorff topology to a geodesic lamination containing $B_n$. Taking a diagonal sequence we get a sequence $\{ c_n\}$ of simple closed curves so that the sequence $\{\ell_{\rho_n}(c_n^*)\}$ is bounded and so that $\{ c_n\}$ converges in the Hausdorff topology to a geodesic lamination containing $L$. If $B_n$ is a closed curve, then  $\ell_{\rho_n}(B^*_n)=0$ and we are done by taking $c_n=B_n$. This concludes the proof of Claim \ref{boundedsequence}.
\end{proof}

Now that we have this sequence $\{c_n\}$, we  use the following proposition to deduce that no component of $L$ is realized in ${\cal T}$.

\begin{lemma}           \label{realise}
Let $M$ be a compact, orientable, hyperbolizable $3$-manifold with incompressible boundary.  Let $\{\rho_n\}\subset {\rm AH}(M)$ be a sequence tending to a small minimal action of $\pi_1(M)$ on an $\R$-tree ${\cal T}$. Let $\{c_n\}\subset\partial M$ be a sequence of simple closed curves such that $\{\ell_{\rho_n}(c_n^*)\}$ is bounded. Assume that $\{ c_n\}$ converges in the Hausdorff topology to a geodesic lamination $C_\infty$ and let $C$ be an irrational minimal sublamination of $C_\infty$. Then $C$ is not realized in ${\cal T}$.
\end{lemma}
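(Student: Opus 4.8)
The plan is to argue by contradiction, exploiting the tension between a bounded length sequence $\ell_{\rho_n}(c_n^*)$ and the unbounded growth of lengths under the rescaling that produces the $\R$-tree ${\cal T}$. Suppose that the irrational minimal lamination $C$ \emph{is} realized in ${\cal T}$; that is, there is a continuous $\pi_1(S)$-equivariant map $\Hp^2 \to {\cal T}$ (for $S$ the component of $\partial M$ containing $C$, acting via $i_*$ on the minimal invariant subtree) that is injective on every lift of a leaf of $C$. By Otal's work on realizations in $\R$-trees, see \cite{conti}, realization is an open condition in an appropriate sense and comes with a lower bound on the way length is ``spread'' along the lamination: more precisely, if $C$ is realized then there is a constant $\delta > 0$ and a neighborhood (in the Hausdorff/measure topology, built from a fixed train track $\tau$ carrying $C$) such that any weighted simple closed curve $\gamma$ carried by $\tau$ and sufficiently close to a measure on $C$ satisfies $\ell_{\cal T}(\gamma) \geq \delta \cdot w(\gamma)$, where $w(\gamma)$ is the total weight. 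The contradiction will come from producing curves close to $C$ whose rescaled $\rho_n$-lengths tend to $0$, violating $\varepsilon_n \ell_{\rho_n}(c_n^*) \to \ell_{\cal T}(c_n^*)$ bounded below.

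The key steps, in order, are as follows. First, fix a train track $\tau$ on $S$ that minimally carries $C$; since $C$ is irrational minimal, measures on $C$ correspond to weight systems on $\tau$, and any geodesic lamination Hausdorff-close to $C$ is carried by $\tau$. Since $\{c_n\}$ converges in the Hausdorff topology to $C_\infty \supset C$, for $n$ large the part of $c_n$ running near $C$ is carried by $\tau$; extracting and reweighting, choose $t_n \to 0$ so that $t_n c_n$ (restricted to the train track, discarding the bounded portion away from $C$) converges in ${\cal ML}(S)$ to a measured lamination $\lambda$ with $|\lambda|$ a sublamination of $C_\infty$ meeting $C$, hence, by minimality of $C$, with $|\lambda| \supseteq C$ (indeed $|\lambda| = C$ if we arrange the weights to concentrate on $C$). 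Second, use that $\{\rho_n\}$ tends to the action on ${\cal T}$: there are $\varepsilon_n \to 0$ with $\varepsilon_n \ell_{\rho_n}(g) \to \ell_{\cal T}(g)$ for every $g \in \pi_1(M)$, and by continuity of length functions and intersection numbers this passes to measured laminations, so $\varepsilon_n \ell_{\rho_n}(\mu^*) \to \ell_{\cal T}(\mu)$ for $\mu \in {\cal ML}(\partial M)$. Third, estimate: $\varepsilon_n \ell_{\rho_n}((t_n c_n)^*) = \varepsilon_n t_n \ell_{\rho_n}(c_n^*)$, and since $\ell_{\rho_n}(c_n^*)$ is bounded while $\varepsilon_n \to 0$ and $t_n \to 0$, this product tends to $0$. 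Passing to the limit, $\ell_{\cal T}(\lambda) = 0$. Fourth, derive the contradiction: a measured lamination $\lambda$ with $\ell_{\cal T}(\lambda) = 0$ has its support contained in the set of leaves fixed pointwise (or with zero translation length) by the action, which for a \emph{small minimal} action of a surface group, together with the hypothesis that $C \subseteq |\lambda|$ is realized, is impossible — realization of $C$ forces $\ell_{\cal T}(\lambda) > 0$ for any measure $\lambda$ with support containing $C$, because an equivariant map injective on leaf lifts cannot have the translation lengths of the corresponding group elements collapse to zero along the lamination (this is the content of Otal's realization criterion in \cite{conti}; see also \cite{meister2}).

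The main obstacle is the fourth step: making precise and invoking correctly the implication ``$C$ realized in ${\cal T}$'' $\Rightarrow$ ``$\ell_{\cal T}(\lambda) > 0$ for $\lambda$ supported on $C$''. This is not quite formal — one must use that a realizing map sends the lift of a leaf homeomorphically onto a geodesic arc in ${\cal T}$ and control how the transverse measure of $\lambda$ interacts with the metric on ${\cal T}$ along the image, so that a lower bound $\ell_{\cal T}(\lambda) \geq \delta \|\lambda\|$ holds uniformly for nearby weight systems. This is exactly where Otal's machinery from \cite{conti} (relating realizability, dual trees, and length functions via train-track neighborhoods) is needed, and the careful point is checking that $\lambda$ indeed lands in the relevant neighborhood, i.e. that the reweighted curves $t_n c_n$ are genuinely carried by the chosen $\tau$ and converge \emph{through} $\tau$, not merely in the Hausdorff topology. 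A secondary subtlety is the passage, in the first step, from the Hausdorff convergence $c_n \to C_\infty$ to a \emph{measured} limit supported exactly on $C$: one must discard the part of $c_n$ far from $C$ (which contributes only bounded $\rho_n$-length, hence is harmless after rescaling) and extract so that the surviving mass concentrates on the minimal piece $C$, using that $C$ is minimal so any nonzero sublimit-measure meeting $C$ is supported on all of $C$.
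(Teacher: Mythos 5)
Your overall strategy (contradiction via Otal's continuity machinery, using the boundedness of $\ell_{\rho_n}(c_n^*)$ against a lower length bound forced by realization) is the right one, but the pivotal step is not justified. In your third step you pass from $\varepsilon_n t_n \ell_{\rho_n}(c_n^*)\to 0$ to $\ell_{\cal T}(\lambda)=0$, where $\lambda=\lim t_n c_n$. This is a \emph{diagonal} limit: both the representation $\rho_n$ and the lamination $t_n c_n$ vary with $n$, and the hypothesis $\varepsilon_n\ell_{\rho_n}(g)\to\ell_{\cal T}(g)$ only gives convergence for each \emph{fixed} $g$ (or fixed measured lamination). The joint continuity $\varepsilon_n\ell_{\rho_n}(\mu_n^*)\to\ell_{\cal T}(\mu)$ when $\mu_n\to\mu$ is false in general — establishing a substitute for it is precisely the content of Otal's Continuity Theorem, and that theorem only delivers a one-sided estimate: for a lamination $E\supset C$ realized in ${\cal T}$ there is a Hausdorff neighborhood ${\cal V}(E)$, a constant $Q$ and an $n_0$ with $\varepsilon_n\ell_{\rho_n}(c^*)\geq Q\,\ell_{s_0}(c)$ for every simple closed curve $c\subset{\cal V}(E)$ and $n\geq n_0$. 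The contradiction must be extracted from this lower bound applied to honest simple closed curves at stage $n$ (whose reference lengths $\ell_{s_0}$ blow up while $\varepsilon_n\ell_{\rho_n}$ stays bounded), not from a computation of $\ell_{\cal T}(\lambda)$, which your argument never legitimately reaches.

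The second gap is in your first step, where you "discard the part of $c_n$ far from $C$" and declare it harmless. To feed Otal's theorem you need genuine simple closed curves $e_n\subset S(C)$ lying in the fixed neighborhood ${\cal V}(E)$, with $\ell_{\rho_n}(e_n^*)$ (length in the $3$-manifold $M_n$, not in a surface metric) uniformly controlled. Closing up an arc of $c_n\cap S(C)$ by running along $\partial\overline{S(C)}$ produces a curve whose geodesic representative in $M_n$ has no a priori relation to $\ell_{\rho_n}(c_n^*)$. The paper's proof spends most of its effort exactly here: it builds a pleated surface $f_n$ realizing a finite lamination spiraling onto $W(C)-S(C)$, invokes Thurston's efficiency of pleated surfaces to get $\ell_{f_n}(c_n\cap R_D)\leq\ell_{\rho_n}(c_n^*)+Q\,i(c_n,E)$, extracts an arc $k_n$ of bounded $f_n$-length, and only then applies the cut-and-paste to get $e_n$ with $\ell_{\rho_n}(e_n^*)\leq 2K+\ell_{\rho_n}(\partial\overline{S(C)}^*)$, hence $\varepsilon_n\ell_{\rho_n}(e_n^*)$ bounded. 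Without an argument of this kind your construction does not produce curves to which the realization criterion applies, and with it the detour through $\ell_{\cal T}(\lambda)=0$ becomes unnecessary.
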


\begin{proof}

The study of the behavior of the lengths of geodesic laminations that are realized in ${\cal T}$ has already been initiated by J.-P. Otal, see \cite{conti}. His results are stated under the assumption that $M$ is a handlebody, and he considers a sequence of geodesic laminations that converges in the Hausdorff topology. But a careful look at the proof yields the following statement.

\begin{theoremnum}[Continuity Theorem \cite{conti}]        \label{contin}
Let $M$ be a compact, orientable, hyperbolizable $3$-manifold. Let $\{\rho_n\}$ be a sequence of geometrically finite representations of $\pi_1(M)$ tending to a small minimal action of $\pi_1(M)$ on an $\R$-tree ${\cal T}$. Let $\varepsilon_n\longrightarrow 0$ be such that for all $g\in\pi_1(M)$, we have $\varepsilon_n \ell_{\rho_n}(g)\longrightarrow \ell_{\cal T}(g)$ and let $L\subset\partial_{\chi<0} M$ be a minimal geodesic lamination which is realized in ${\cal T}$. Consider a geodesic lamination $E\subset S(L)$ containing $L$. Then there exists a neighborhood ${\cal V}(E )$ of $E$ and constants $Q,n_0$ such that for every simple closed curve $c\subset {\cal V}(L )$ and for every $n\geq n_0$,
$$\varepsilon_n \ell_{\rho_n}(c^*)\geq Q  \ell_{s_0}(c).$$
\end{theoremnum}

In this statement, $s_0$ is a reference metric which is used to measure the "complexity" of the curve $c$. Any complete hyperbolic metric on $\partial M$ can be chosen and $Q$ will depend on this choice.

Theorem \ref{contin} is enough to conclude the proof of Lemma \ref{realise} when $\{c_n\}$ converges in the Hausdorff topology to $C$. In order to deal with the more general case, we use the cut and paste operation described in Section \ref{cut} in $\overline{S(C)}$ on the $c_n$. This provides us with a sequence of simple closed curves $\{e_n\}\subset S(C)$ satisfying: $\ell_{s}(e_n)\leq 2 \ell_{s}(c)+\ell_{s}(\partial\overline{F})$ and $i(e_n,c)\leq i(c,\partial\overline{F})$. Furthermore, by Lemma \ref{limit}, up to extracting a subsequence, $\{e_n\}$ converges in the Hausdorff topology to a geodesic lamination $E$ containing $C$.

Now it remains to control the length in $M_n=\Hp^3/\rho_n(\pi_1(M))$ of the sequence $\{e_n\}$ thus constructed. Let $S$ be the connected component of $\partial M$ containing $C$. Let $T\subset W(C)$ be the maximal multicurve that is disjoint from $C$, $T\subset W(C)-S(C)$. Denote by $\phi:S\rightarrow S$ the mapping class that performs one left Dehn twist along each component of $T$. For a fixed $n$, the sequence $\{ \phi^k(c_n)\}$ converges as $k\longrightarrow\infty$ to a finite geodesic lamination whose non-compact leaves spiral in $W(C)-S(C)$. Extend this lamination to a finite lamination $T_n$ whose non-compact leaves spiral in $W(C)-S(C)$ and whose complementary regions are ideal triangles.  Let $f_n:S\rightarrow M_n$ be a pleated surface (see definition in \cite[\textsection 5.1]{ceg}) homotopic to the inclusion map such that $f_n$ maps every leaf of $T_n$ to a geodesic of $M_n$. The existence of such a pleated surface follows from \cite[\textsection 5.3]{ceg}. We  denote by $\ell_{f_n}(d)$ the length of a closed geodesic $d$ of $S$ endowed with the metric induced by $f_n$. If a component $c$ of $\partial \overline{S(C)}$ corresponds to a parabolic isometry in $\rho_n(\pi_1(M))$, we consider a map $f_n:S-c\rightarrow M_n$ such that the cusps of $S-c$ are mapped to the corresponding cusps of $M_n$. Such a map $f_n$ is called a {\em noded pleated surface}, see \cite{mincom}, and we set $\ell_{f_n}(c)=0$ in this case.

Given $\eps >0$, it follows from the "efficiency of pleated surfaces", see \cite[Theorem 3.3]{thuii}, that there is a constant $Q = Q(\eps)$ such that  $\ell_{f_n}(c_n\cap R_D)\leq \ell_{\rho_n}(c_n^*)+Q i(c_n, T)$, where $R_D$ is the complement of the $\eps$-Margulis tubes around the components of $\partial \overline{S(C)}$ with short length (with respect to the metric induced by $f_n$). (Compare with \cite[p. 138]{mincom}.) It follows that there is a component $k_n$ of $c_n\cap S(C)\cap R_D$ such that $\{ \ell_{f_n}(k_n)\}$ is bounded by some constant $K>0$ depending on $Q$ and on the bound on the $\ell_{\rho_n}(c_n^*)$. Using these arcs $k_n$ in the construction described in Lemma \ref{cut}, we get a lamination $E\subset S(C)$ with $C\subset E$ and a sequence of simple closed curves $\{e_n\}\subset {\cal ML}(S(C))$ such that $\{ e_n\}$ converges to $E$ in the Hausdorff topology and such that  $\ell_{f_n}(e_n)\leq 2 \ell_{f_n}(k_n)+\ell_{f_n}(\partial\overline{S(C)})$.

By the choice of $k_n$, the sequence $\{\ell_{f_n}(k_n)\}$ is bounded by $K$, and so  $\ell_{f_n}(e_n)\leq 2K+\ell_{f_n}(\partial \overline{S(C)})$. Notice that since $f_n$ realizes $\partial\overline{S(C)}$, we have $\ell_{f_n}(\partial\overline{S(C)})=\ell_{\rho_n}(\partial\overline{S(C)}^*)$. Thus we get $\ell_{\rho_n}(e_n^*)\leq 2K+ \ell_{\rho_n}(\partial\overline{S(C)}^*)$. Since the action of $\rho_n(\pi_1(M))$ tends to the action of $\pi_1(M)$ on the $\R$-tree ${\cal T}$, there is a sequence $\eps_n\longrightarrow 0$ such that we have $\eps_n \ell_{\rho_n}(g)\longrightarrow \ell_{\cal T}(g)$ for every $g\in\pi_1(M)$.  In particular, we have $\eps_n \ell_{\rho_n}(\partial\overline{S(C)}^*)\longrightarrow \ell_{\cal T}(\partial\overline{S(C)}$. Thus $\eps_n \ell_{\rho_n}(e_n^*)\leq2\eps_nK+ \eps_n\ell_{\rho_n}(\partial\overline{S(C)}^*)$ is bounded.

On the other hand, since $\{ e_n\}$ converges to $E\supset C$, we have $\ell_{s_0}(e_n)\longrightarrow\infty$ for every complete hyperbolic metric $s_0$ on $S$. It follows then from Theorem \ref{contin} that $C\subset E$ is not realized in ${\cal T}$. This concludes the proof of Lemma \ref{realise}.
\end{proof}

Combining Claim \ref{boundedsequence} and Lemma \ref{realise}, we conclude that no component of $L_\infty$ is realized in ${\cal T}$.

\subsection{Double incompressibility and realization}
\label{double realize}

Now we show that when $\Gamma_\infty$ is doubly incompressible, at least one minimal sublamination of $L_\infty$ is realized. Thus we  get a contradiction with Claim \ref{boundedsequence} and Lemma \ref{realise}.

\begin{lemma}    \label{realisee}
Let $M$ be a compact, orientable, hyperbolizable $3$-manifold with incompressible boundary. Consider a sequence of representations $\{\rho_n\}\subset {\rm AH}(M)$ uniformizing $M$ that tends to a small minimal action of $\pi_1(M)$ on an $\R$-tree ${\cal T}$. Let  $\Gamma=(F,m,L)$ be a doubly incompressible gallimaufry. Assume that for every simple closed curve $c\subset F$, the sequence $\{\ell_{\rho_n}(c^*)\}$ is bounded. Then at least one component of $L$ is realized in ${\cal T}$.
\end{lemma}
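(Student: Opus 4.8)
The plan is to argue by contradiction, in the spirit of the proofs of the Double Limit Theorem by Thurston \cite{thuii} and Otal \cite{meister2}, with the double incompressibility of $\Gamma$ playing the role that the filling (or binding) hypothesis plays there. So assume that \emph{no} component of $L$ is realized in ${\cal T}$, and fix $\eps_n\longrightarrow 0$ with $\eps_n\ell_{\rho_n}(g)\longrightarrow\ell_{\cal T}(g)$ for every $g\in\pi_1(M)$. Since $\{\rho_n\}$ diverges and tends to an action on a genuine $\R$-tree, that action is minimal, small, and non-trivial (it has no global fixed point). Multiplying by $\eps_n$, the hypothesis that $\ell_{\rho_n}(c^*)$ is bounded for every simple closed curve $c\subset F$ gives $\ell_{\cal T}(c)=0$ for all such $c$.

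Next I would read the degeneration on the boundary. As $\partial M$ is incompressible, for each component $S_i$ of $\partial M$ the inclusion induces $\pi_1(S_i)\hookrightarrow\pi_1(M)$, and the action of $\pi_1(S_i)$ on its minimal invariant subtree ${\cal T}_i\subset{\cal T}$ is again small; by Skora's Theorem \cite{skora} it is dual to a measured geodesic lamination $\mu_i$ on $S_i$, with $\ell_{\cal T}(c)=i(\mu_i,c)$ for every simple closed curve $c\subset S_i$. I would then record two facts about $\mu_i$. First, $i(\mu_i,c)=\ell_{\cal T}(c)=0$ for all $c\subset F\cap S_i$, so $|\mu_i|$ crosses no simple closed curve of $F$ and can be isotoped off $F$. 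Second, were $|\mu_i|$ to cross a component $L'$ of $L\cap S_i$ (necessarily an irrational minimal lamination, since $L$ has no closed leaf), then $L'$ would be realized in ${\cal T}_i$ --- this is the realization criterion underlying Otal's work, that an irrational minimal lamination crossing the lamination dual to an action is realized in the dual tree, see \cite{meister2} (and \cite{conti}) --- against our assumption; hence $|\mu_i|$ crosses no component of $L$. Since the complementary regions of $F\cup L$ in $S_i$ are discs, annuli or M\"obius bands, and a measured geodesic lamination puts no mass on a disc and at most a weighted core curve on an annulus or M\"obius band, it follows that $|\mu_i|$ is contained in $L$ together with the cores of the complementary annuli and M\"obius bands of $F\cup L$ in $S_i$.

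The crux --- and the step I expect to be the main obstacle --- is to extract from this degenerate action an essential annulus, M\"obius band or disc $(E,\partial E)\subset(M,\partial M-F)$ whose boundary is disjoint from $|\lambda|$, where $\lambda$ is the measured geodesic lamination with $|\lambda|\subseteq L$ supplied by the double incompressibility of $\Gamma$. Indeed $\partial E$, being built from the complementary cores above, misses $L\supseteq|\lambda|$, so $i(\partial E,\lambda)=0$, which contradicts the requirement in the definition of double incompressibility that $i(\partial E,\lambda)$ be at least a fixed positive constant $\eta$, completing the proof. To produce $E$ I would invoke the Morgan--Shalen machinery \cite{mors1} that converts a small, minimal, non-trivial action of $\pi_1(M)$ on an $\R$-tree into a non-empty two-dimensional measured lamination (i.e. an essential embedded surface) $\mathfrak{L}$ in $M$ whose dual tree dominates ${\cal T}$: by the previous paragraph the boundary trace $\mathfrak{L}\cap\partial M$ meets neither $F$ nor $L$, so a component of $\mathfrak{L}$ is an essential annulus, M\"obius band or disc with boundary in $\partial M-F$ and disjoint from $|\lambda|$; alternatively, working curve-by-curve, each core $c$ of a complementary annulus or M\"obius band of $F\cup L$ that carries mass of some $\mu_i$ is essential in $\partial M$, has $\ell_{\cal T}(c)=0$ and is disjoint from $|\lambda|$, and one uses the incompressibility of $\partial M$ together with the characteristic submanifold of $M$ to see that such a $c$ bounds an essential annulus, M\"obius band or disc in $(M,\partial M-F)$. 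In either approach the delicate point --- and the reason one needs the full strength of double incompressibility, not merely incompressibility of $\partial M$ --- is to verify that the surface produced really is essential and has \emph{both} boundary curves in $\partial M-F$, rather than, say, being an annulus joining $\partial M-F$ to $F$ or a surface homotopic into the cusps.
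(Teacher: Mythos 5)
Your first half coincides with the paper's argument: restrict the action to each boundary subgroup, apply Skora's theorem to get a dual measured lamination $\beta$ on $\partial M$, observe that the boundedness of $\{\ell_{\rho_n}(c^*)\}$ for $c\subset F$ forces $i(\beta,c)=0$ and hence $|\beta|\subset\partial M-F$, and invoke Otal's criterion that a minimal lamination crossing $|\beta|$ is realized in ${\cal T}$ \cite{conti}. (Your contrapositive phrasing versus the paper's direct one is immaterial.) One omission already at this stage: you never establish that some $\mu_i$ is non-zero. The paper cites Morgan--Shalen \cite{mors3} for the non-triviality of $\beta$, and without it your route (b) has no cores carrying mass to work with, while in route (a) you must still argue that the Morgan--Shalen lamination has non-empty boundary trace.

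The genuine gap is at the step you yourself flag as the crux. Route (b) is not justified as stated: that a core curve $c$ of a complementary annulus of $F\cup L$ is essential in $\partial M$, has $\ell_{\cal T}(c)=0$ and carries an atom of $\beta$ does not, via ``incompressibility plus the characteristic submanifold,'' yield an essential annulus, M\"obius band or disc in $(M,\partial M-F)$ bounded by $c$; the passage from atoms of the dual lamination of a small $\pi_1(M)$-action to essential annuli is a substantive theorem about actions of $3$-manifold groups on $\R$-trees, not a topological fact about curves of vanishing translation length. Route (a) breaks in the case you do not address: nothing prevents $|\beta|$ from being entirely contained in $L$ (the action degenerating ``along'' an irrational component $L'$). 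Then no complementary core carries mass, the boundary trace of $\mathfrak{L}$ consists of curves tracking $L'$, the relevant component of $\mathfrak{L}$ need not be an annulus, and its boundary is certainly not disjoint from $|\lambda|$, so the intended contradiction $i(\partial E,\lambda)=0$ evaporates. The paper's resolution is \cite[Lemme 4.7]{espoir}: for the dual lamination $\beta$ of a small action of $\pi_1(M)$ there is a sequence of essential annuli or M\"obius bands $E_n$ whose boundaries converge in the Hausdorff topology to a sublamination $B$ of $|\beta|$, uniformly in all cases. Double incompressibility is then applied to the whole sequence: $i(\lambda,\partial E_n)\geq\eta>0$ together with $\partial E_n\to B$ forces $L$ to cross $B\subset|\beta|$, and Otal's criterion concludes. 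If you wish to keep your argument you must import (or reprove) that lemma; the two sketches you offer do not substitute for it.
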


\begin{proof}
Consider a component $S$ of $\partial_{\chi <0} M$. Since $\partial M$ is incompressible, we can view $\pi_1(S)$ as a subgroup of $\pi_1(M)$. Thus we have a small action of $\pi_1(S)$ on ${\cal T}$. Let ${\cal T}_S$ be the minimal sub-tree of ${\cal T}$ for this action. By Skora's Theorem, see \cite{skora}, this action is dual to a measured geodesic lamination $\beta$. Doing the same for each component of $\partial M$ we get a measured geodesic lamination $\beta\in{\cal ML}(\partial M)$. By \cite{mors3}, $\beta$ is not trivial. By \cite{conti}, if a minimal geodesic lamination crosses $|\beta|$, then it is realized in ${\cal T}$. So we have to show that one component of $L$ crosses $|\beta|$.

For a simple closed curve $c\subset F$, we have assumed that $\{ \ell_{\rho_n}(c^*)\} $ is bounded. Hence we have $i(c,\beta)=0$ for every simple closed curve $c\subset F$. This is possible only if $|\beta|$ lies in $\partial M-F$. Let us show that $|\beta|$ crosses $L$. 

By \cite{mors3}, $(M,S(\beta))$ is not acylindrical, i.e. there is at least one essential annulus with boundary in $S(\beta)$. In particular, if $\beta$ is a multi-curve, it contains the boundary of an essential annulus or M\"obius band. Since $L$ is doubly incompressible, it follows that $L$ crosses $\beta$.

Now we can assume that $\beta$ is not a multi-curve and denote by $\mu$ a connected sublamination of $\beta$ which is not a simple closed curve. By \cite{mors3}, $\beta$ lies in the boundary of an essential $I$-bundle $W\subset M$ and $\beta\cap W\cap \partial M$ factors through the fibration. Namely, if we denote by $B$ the base surface of $W$ and by $p:W\rightarrow B$ the projection along the fibers, then $p^{-1}(p(\mu))\cap\partial W$ is a sublamination of $\beta$ (compare with \cite{meister1}, see also  \cite[Lemme 4.7]{espoir}). Consider a sequence $e_n\subset B$ of simple closed curves that converge in the Hausdorff topology to the support of $p(\mu)$. Then $\{ E_n=p^{-1}(e_n)\}$ is a sequence of 
essential annuli or M\"obius bands such that $\{\partial E_n\}$ converges in the Hausdorff topology to a sublamination of $|\beta|$. Since $L$ is doubly incompressible in $(M,\partial M-F)$, $L$ crosses $|\beta|$.


Thus we have proved that $|\beta|$ crosses $L$. By \cite[Theorem 3.1.4]{conti} this concludes the proof of Lemma  \ref{realisee}
\end{proof}

We can now conclude the proof of Proposition  \ref{algcon}.

\begin{proof}[Proof of Proposition \ref{algcon}]
  Assume that the conclusion is not satisfied, namely that a subsequence of $\{\rho_n\}$ tends to an action of $\pi_1(M)$ on an $\R$-tree ${\cal T}$. By Claim \ref{boundedsequence} and Lemma \ref{realise} no component of $L_\infty$ is realized in ${\cal T}$. By the definition of the topology on ${\cal GA}(\partial M)$, for every simple closed curve $c\subset F_\infty$, the sequence $\{\ell_{m_n}(c)\}$ is bounded. By \cite{bers inequality}, the sequence $\{\ell_{\rho_n}(c^*)\}$ is also bounded. Thus the hypotheses of Lemma \ref{realisee} are fulfilled. It follows that at least one component of $L_\infty$ is realized in ${\cal T}$. We conclude from this contradiction that a subsequence of $\{\rho_n\}$ converges algebraically (up to conjugacy).
\end{proof}

\section{Strong convergence}
\label{strong}

Let $M$ be a compact, orientable, hyperbolizable $3$-manifold, let $\{\rho_n\}\subset {\rm SH}(M)$ be a sequence of representations uniformizing $M$ and let $\Gamma_n=(F_n,m_n,L_n)$ be the  end invariants of $\rho_n$. Assume that $\{\Gamma_n\}$ converges to a doubly incompressible gallimaufry $\Gamma_\infty=(F_\infty,m_\infty, L_\infty)$. We proved in the preceding Section that a subsequence of $\{\rho_n\}$ converges algebraically. We  now show that this subsequence converges strongly to its algebraic limit $\rho_\infty(\pi_1(M))$, by showing that the convex cores of the $\rho_n (\pi_1(M))$ converge to the convex core of $\rho_\infty (\pi_1(M))$.

\subsection{The ends of $M_\infty$}
\label{ends m inf}

 We  start with the geometrically finite ends of $\rho_\infty  (\pi_1(M))$.

\begin{lemma}           \label{geof}
Let $F$ be a connected component of the moderate surface $F_\infty$ of $\Gamma_\infty$. Then there is a convex pleated surface $f_\infty:F\rightarrow M_\infty=\Hp^3/\rho_\infty(\pi_1(M))$ homotopic to the inclusion $F\subset M$.
\end{lemma}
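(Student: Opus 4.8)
The plan is to produce, for each component $F$ of the moderate surface $F_\infty$, a pleated surface $f_\infty\colon F\to M_\infty$ homotopic to the inclusion whose induced metric on $F$ is (isotopic to) $m_\infty$, and then to check that such a pleated surface is convex, i.e. that it faces a geometrically finite end of $M_\infty$. The starting point is the algebraic convergence $\rho_n\to\rho_\infty$ obtained in Section~\ref{algebric}, together with the hypothesis $\Gamma_n\to\Gamma_\infty$ in ${\cal GA}(\partial M)$, which by condition~i) of the definition of convergence of gallimaufries says that $F_\infty\subset F_n$ for all $n$ and that the restrictions of the conformal metrics $m_n$ to $F_\infty$ converge to $m_\infty$. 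Since $F$ is contained in the moderate surface $F_n$ of $\Gamma_n$, the component of $\Omega_{\rho_n}/\rho_n(\pi_1(M))$ containing $F$ carries the hyperbolic metric $m_n$, so for each $n$ there is a geometrically finite end of $M_n$ facing $F$, and one can realize $m_n|_F$ by a convex pleated surface $f_n\colon F\to M_n$ homotopic to the inclusion (take, for instance, the pleated surface spanning a maximal geodesic lamination containing a bounded-length pants decomposition $P_n$ of $(F,m_n|_F)$; here I use Bers' theorem as in Claim~\ref{boundedsequence} to keep $\ell_{m_n}(P_n)$ bounded).

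First I would pass to a subsequence so that the bounded-length pants decompositions $P_n$ converge in the Hausdorff topology to a lamination $P_\infty$, which by Lemma~\ref{shortdegen}-type reasoning (using that $m_n|_F\to m_\infty$) one can arrange to be a pants decomposition of $(F,m_\infty)$ of bounded $m_\infty$-length; completing it to a fixed maximal lamination $T$ on $F$, I get pleated surfaces $f_n\colon F\to M_n$ all pleated along (a lamination Hausdorff-close to) $T$. The induced metrics $f_n^*(\text{hyp})$ on $F$ are then uniformly bounded below on the curves of $P_n$, so the family $\{f_n\}$ has no degeneration; by the standard compactness of pleated surfaces with a lower bound on injectivity radius along the bending locus (Thurston, \cite{notes}; Canary--Epstein--Green, \cite{ceg}), together with the convergence $\rho_n\to\rho_\infty$ of the ambient manifolds on larger and larger balls (as in the proof of Lemma~\ref{topo}), a subsequence of the $f_n$ converges to a pleated surface $f_\infty\colon F\to M_\infty$ homotopic to the inclusion. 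The key point is then to identify the induced metric: since the domain metrics $f_n^*(\text{hyp})$ are, up to bounded error, comparable to $m_n|_F$ on a bounded-length pants decomposition and $m_n|_F\to m_\infty$, the limiting induced metric is $m_\infty$; and $f_\infty$ inherits convexity from the $f_n$ because convexity (facing a geometrically finite end, equivalently the image being on the boundary of the convex core side) is a closed condition under this kind of geometric convergence.

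The main obstacle I expect is exactly this passage to the limit under two simultaneous degenerations — the ambient groups $\rho_n(\pi_1(M))$ converge only algebraically, not a priori strongly, and the surfaces $F$ sit inside pieces of $\partial M$ that may be developing new cusps (the condition $\ell_{m_n}(\partial\overline F_\infty)\to 0$ means the boundary curves of $F_\infty$ are pinching). To handle the pinching one should work with noded pleated surfaces as in \cite{mincom}: on the subsurface $F_\infty\subset F_n$ the boundary curves $\partial\overline{F}_\infty$ have $m_n$-length tending to $0$, so in the limit they become cusps of $F$, and the correct object is a noded pleated surface $f_\infty\colon F\to M_\infty$ sending those cusps to cusps of $M_\infty$. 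One must check the bounded-geometry hypothesis needed for pleated-surface compactness survives after removing thin collars around these pinching curves — this is where the argument is delicate and where I would invoke the efficiency-of-pleated-surfaces estimates of \cite[Theorem 3.3]{thuii} and the Margulis lemma, much as in the proof of Lemma~\ref{realise}, to control the geometry away from the cusps. Once the limiting noded convex pleated surface $f_\infty$ is in hand, the fact that it is homotopic to the inclusion $F\subset M$ follows from the corresponding property of the $f_n$ and the algebraic convergence, completing the proof.
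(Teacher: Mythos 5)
Your overall skeleton matches the paper's proof: build convex pleated surfaces $f_n$ in the approximating manifolds facing the end that contains $F$, use the convergence $m_n|_F\to m_\infty$ to rule out degeneration of the induced metrics, extract a limit by Arzela--Ascoli compactness of pleated surfaces, and conclude that convexity persists in the limit (the paper cites \cite{meister1} and \cite{nuite} for exactly this last point). However, two of your steps, as written, do not work.

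First, your explicit construction of the $f_n$ --- ``the pleated surface spanning a maximal geodesic lamination containing a bounded-length pants decomposition $P_n$'' --- does not produce a \emph{convex} pleated surface, and its induced metric is not $m_n|_F$. A pleated surface realizing an arbitrary maximal lamination is in general not a boundary component of the convex core and need not bound a convex region. The paper instead takes $f_n$ to be the corresponding component of $\partial C_{\rho_n}$ itself (which is convex by definition) and invokes Sullivan's theorem (Epstein--Marden, \cite{sull}) to say that the induced metric on $\partial C_{\rho_n}$ is uniformly bilipschitz to the conformal metric $m_n$ at infinity; this is what turns the hypothesis $m_n|_F\to m_\infty$ into the bounded-geometry input ($\ell_{f_n}(c)$ bounded for $c\subset F$, $\ell_{f_n}(\partial\overline F)\to 0$) needed for compactness. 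Relatedly, your insistence that the limiting induced metric \emph{is} $m_\infty$ is both unnecessary for the lemma and false in general: the convex core boundary metric is only bilipschitz to the conformal metric at infinity, and the paper only claims convergence of the induced metrics to \emph{some} complete hyperbolic metric.

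Second, your appeal to ``the convergence of the ambient manifolds on larger and larger balls (as in the proof of Lemma \ref{topo})'' is circular at this point in the argument: those bilipschitz approximations exist only under \emph{geometric} convergence, and establishing geometric (hence strong) convergence is precisely the goal of this section; so far only algebraic convergence is available. The compactness argument must therefore be run purely from algebraic convergence. The paper does this by noting that for a curve $c\subset F$ with $\rho_\infty(c)$ hyperbolic, the bounded length $\ell_{f_n}(c)$ forces $f_n(c)$ to stay at uniformly bounded distance from the closed geodesic $c^*\subset M_n$, whose lifts converge by algebraic convergence; this anchors the lifted equivariant maps $\widetilde f_n$ in $\Hp^3$ so that Arzela--Ascoli (as in \cite{ceg}) applies. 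Your instinct to use noded pleated surfaces for the pinching boundary curves of $F$ is reasonable and consistent with the statement that the limit is a \emph{complete} (cusped) pleated surface, but it is the anchoring step above, not the bilipschitz maps of Lemma \ref{topo}, that makes the limit exist.
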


\begin{proof}
For $n$ sufficiently large, $F$ is a subsurface of $F_n$;  let $H_n$ be the connected component of $F_n$ containing $F$. We know that the restrictions of the $m_n$ to $F$ converge to a complete hyperbolic metric $m$ on $F$. Let $C_{\rho_n}$ be the convex core of $\rho_n(\pi_1(M))$. Since $H_n$ is a component of $F_n$, there is a convex pleated surface $f_n: H_n\rightarrow \partial C_{\rho_n}$. Furthermore, by \cite{sull}, $H_n$ endowed with the metric induced by $f_n$ is bilipschitz to $(H_n,m_n)$ with a uniform bilipschitz constant. It follows that  $\ell_{f_n}(\partial\overline{F})\longrightarrow 0$ and that $\{\ell_{f_n}(c)\} $ is bounded for every simple closed curve $c\subset F$, where $\ell_{f_n}$ is the length function of the metric induced by $f_n$. From this we deduce that there is a subsequence such that the restrictions to $F$ of the metrics induced by the $f_n$ converge to a complete hyperbolic metric.

Let $c\subset F$ be a simple closed curve such that $\rho_\infty(c)$ is a hyperbolic isometry. Since $\{ \ell_{f_n}(c)\}$ is bounded, the distance between $c^*\subset M_n$ and $f_n(c)$ is  bounded uniformly in $n$. Using Arzela-Ascoli's Theorem as in \cite{ceg} we can extract a subsequence of $\{f_n\}$ that converges to a pleated surface $f_\infty:F\rightarrow C_{\rho_\infty}$ homotopic to the inclusion $F\subset M$. Since the $f_n$ are convex surfaces, by \cite{meister1}, $f_\infty$ is a convex surface as well, see also \cite{nuite}.
\end{proof}

Next we  show that each component of the immoderate lamination $L_\infty$ of $\Gamma_\infty$ is an ending lamination of $M_\infty$.

\begin{lemma}   \label{ire}
Let $L$ be a connected component of $L_\infty$. Then there is a geometrically infinite end $E$ of $M_\infty=\Hp^3/{\rho_\infty}(\pi_1(M))$ such that $E$ is homeomorphic to $S(L)\times [0,\infty)$, the inclusion $E\rightarrow M_\infty$ is homotopic to the inclusion $S(L)\hookrightarrow M$ and $L$ is the ending lamination of $E$.
\end{lemma}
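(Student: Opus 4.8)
The plan is to use the sequence $\{c_n^*\}$ of short geodesics approximating $L$ (supplied by Claim \ref{boundedsequence} via the definition of convergence in ${\cal GA}(\partial M)$, case $L\not\subset F_n$) together with the algebraic convergence $\rho_n\to\rho_\infty$ to show that the curves approximating $L$ exit every compact subset of $M_\infty$, and that the complementary end has the correct topology. First I would fix the compact core $K_\infty$ of $M_\infty$ provided by Lemma \ref{topo} and the bilipschitz almost-isometries $\phi_n\colon M_\infty\to M_n$ on larger and larger balls $B(x_\infty,R_n)$ from the geometric convergence (strong convergence gives geometric convergence here; note that algebraic convergence alone is not enough, which is why the hypothesis is $\rho_n\in{\rm SH}(M)$). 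I would then take a sequence of simple closed curves $c_{n,k}$ on $S(L)$ whose geodesic realizations in $M_n$ exit the end of $M_n$ facing $S(L)$ (these exist since, by the definition of convergence in ${\cal GA}(\partial M)$ and case iii), the restrictions $m_n|_{S(L)}$ tend to $L$, so $L$ is, for $n$ large, the ending lamination of a geometrically infinite end of $M_n$ homeomorphic to $S(L)\times[0,\infty)$, or a component $B_n$ of $L'_n$ accounting for an annulus cusp). Using the $\phi_n^{-1}$ to pull these curves back and a diagonal argument, I would produce a sequence of curves $d_j\subset S(L)$ with $\ell_{\rho_\infty}(d_j^*)$ bounded whose geodesic representatives $d_j^*\subset M_\infty$ leave every compact set. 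This is exactly the hypothesis of Bonahon's criterion for the existence of a geometrically infinite end, so there is a geometrically infinite end $E$ of $M_\infty$ whose ending lamination is the Hausdorff limit of the $d_j$, which contains $L$.

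The next step is to pin down the homotopy and homeomorphism types. Since $L$ is a component of the immoderate lamination of a gallimaufry, the complementary components of $S(L)$ in the ambient boundary component are discs, annuli or M\"obius bands; combined with the incompressibility of $\partial M$ and the uniqueness of the compact core (\cite{mms}), the subsurface of $\partial K_\infty$ facing $E$ must be isotopic to $S(L)$, so $E\approx S(L)\times[0,\infty)$ and the inclusion $E\hookrightarrow M_\infty$ is homotopic to $S(L)\hookrightarrow M$. For the ending lamination: by construction the ending lamination of $E$ contains $L$; conversely, by Lemma \ref{geof} the subsurface $F_\infty$ already accounts for all geometrically finite ends, so by a counting/exhaustion argument (the ends of $M_\infty$ are finite in number and $\partial K_\infty$ is covered by $F_\infty$ together with the $S(L)$ as $L$ ranges over components of $L_\infty$) the end $E$ faces exactly $S(L)$, forcing its ending lamination to equal $L$ — here one uses that an ending lamination of a geometrically infinite end is minimal and fills $S(L)$, together with property ii) of convergence in ${\cal GA}$ to rule out extra components.

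The main obstacle I expect is showing that the approximating curves $d_j^*$ genuinely exit every compact subset of $M_\infty$, rather than merely having bounded length: one must transfer the "exiting" behaviour from $M_n$ (where it holds because $m_n|_{S(L)}\to L$ and $L$ is the ending lamination of $M_n$) across the almost-isometries $\phi_n$ while letting both $n$ and the curve index vary. This requires a careful diagonalization: for each fixed compact $Z\subset M_\infty$ one needs $n$ large enough that $\phi_n$ is defined and nearly isometric on a neighbourhood of $Z$, and then $k=k(n)$ large enough that $c_{n,k}^*$ has left $\phi_n(Z)$ inside $M_n$; continuity of geodesic representatives under the bilipschitz maps (with constants tending to $1$) then gives that the corresponding geodesic in $M_\infty$ has left $Z$. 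A secondary subtlety is the case where the relevant component $B_n$ of $L'_n$ comes from an annulus cusp (a closed leaf): then $\rho_n(B_n)$ is parabolic, and one must check that in the limit this still produces a geometrically infinite end with ending lamination $L$ rather than merely a rank-one cusp — this is handled because $L$ is, by the gallimaufry hypotheses, a non-closed recurrent lamination, so the closed curves $B_n$ approximating it must themselves exit, and the limit end cannot be a cusp.
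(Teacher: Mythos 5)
There is a genuine gap, and it is a circularity. At the point where Lemma \ref{ire} sits, the only thing known about the sequence $\{\rho_n\}$ is that a subsequence converges \emph{algebraically} to $\rho_\infty$ (Proposition \ref{algcon}); the whole purpose of Section \ref{strong}, of which this lemma is a step, is to upgrade that to strong convergence. Your argument opens by taking the compact core of $M_\infty$ ``provided by Lemma \ref{topo}'' and the almost-isometric comparison maps $\phi_n\colon B(x_\infty,R_n)\to M_n$ with $R_n\to\infty$, $q_n\to 1$ ``from the geometric convergence''. Neither is available: Lemma \ref{topo} has strong convergence of the sequence as a hypothesis, and the comparison maps exist only once one knows that $\{\rho_n(\pi_1(M))\}$ converges geometrically to $\rho_\infty(\pi_1(M))$ --- a priori the geometric limit could be strictly larger than the algebraic limit, in which case the maps go between the wrong manifolds. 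The hypothesis $\rho_n\in{\rm SH}(M)$ only records which topology is placed on the ambient set; it does not assert that the sequence converges in that topology. A second, independent problem: your source of exiting curves in $M_n$ fails in the main case. When $L\subset F_n$ (e.g.\ $\rho_n$ geometrically finite and minimally parabolic, so $F_n=\partial_{\chi<0}M$ and $L_n=\emptyset$), the end of $M_n$ facing $S(L)$ is geometrically \emph{finite}; $L$ is not an ending lamination of $M_n$ for any $n$, and there is no sequence $c_{n,k}^*$ exiting that end of $M_n$. The degeneration $m_n|_{S(L)}\to L$ is a statement about the conformal structures at infinity, not about geodesics leaving compact sets in $M_n$.

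The paper's proof stays entirely within algebraic convergence and so avoids both issues: it restricts to $\sigma_n=\rho_n|_{\pi_1(S)}$ for the boundary component $S$ containing $L$, uses Claim \ref{boundedsequence} to produce simple closed curves $c_n$ with $\{\ell_{\rho_n}(c_n^*)\}$ bounded and Hausdorff limit containing $L$, and then applies the continuity of the length function \cite{brock} to conclude $\ell_{\sigma_\infty}(\lambda^*)=0$ for a measure $\lambda$ supported on $L$. Hence $L$ is unrealized in $\Hp^3/\sigma_\infty(\pi_1(S))$ and is the ending lamination of an end $E'$ of that cover; Canary's covering theorem \cite{canbouts} then shows $E'$ finitely covers an end $E$ of $M_\infty$, and the topological identification $E\approx S(L)\times[0,\infty)$ is obtained by showing this covering has degree one. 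If you want to salvage your outline, you must replace every use of the maps $\phi_n$ by an argument available under algebraic convergence alone; the length-function-plus-covering-theorem route is the standard way to do that.
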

\begin{proof}
Let $S$ be the component of $\partial M$ containing $L$ and let $\sigma_n:\pi_1(S)\rightarrow {\rm PSL}(2,\C)$ be the representation induced from $\rho_n$ by the inclusion map. We note that  $\ell_{\sigma_n}(\mu^*)=\ell_{\rho_n}(\mu^*)$ for every measured geodesic lamination $\mu\in{\cal ML}(S)\subset{\cal ML}(\partial M)$.

By Claim \ref{boundedsequence}, there is a sequence $\{c_n\}$ of simple closed curves converging in the Hausdorff topology to a geodesic lamination containing $L$ such that $\{\ell_{\sigma_n}(c^*_n)\}$ is a bounded sequence. Choose a transverse measure $\lambda$ supported by $L$. Since $\{\ell_{\sigma_n}(c^*_n)\}$ is a bounded sequence, it follows from the continuity of the length function, see \cite{brock}, that  $\ell_{\sigma_\infty}(\lambda^*)=0$. This means that $L$ is not realized in $\Hp^3/\sigma_{\infty}(\pi_1(S))$. It follows that $L$ is an ending lamination of an end $E'$ of $\Hp^3/\sigma_\infty(\pi_1(S))$

By \cite{canbouts}, the end $E'$ covers an end $E$ of $M_\infty$ and the covering $E'\rightarrow E$ is finite-to-one. On the other hand, $E'$ is homeomorphic to $S(L)\times [0,\infty)$ (this comes from the fact that the ending lamination has to "fill up" the surface defining the end) and if we consider the surface $S(L)\times \{1\}$ its image in $E$ under the covering $E'\rightarrow E$ is homotopic to the inclusion $S(L)\hookrightarrow M$. Therefore the covering $E'\rightarrow E$ is a homeomorphism. Thus we have proved that there is a geometrically infinite end $E$ of $M_\infty=\Hp^3/{\rho_\infty}(\pi_1(M))$ such that $E$ is homeomorphic to $S(L)\times [0,\infty)$, the inclusion $E\hookrightarrow M_\infty$ is homotopic to the inclusion $S(L)\hookrightarrow M$ and $L$ is the ending lamination of $E$.

\end{proof}

\subsection{Reconstructing the convex core}
\label{reconstructing}

In this Section we  show how the results of the preceding Section allow us to describe the convex core of $M_\infty$.

Let $L$ be a component of $L_\infty$. By Lemma \ref{ire}, $M_\infty$ has a geometrically infinite end homeomorphic to $S(L)\times[0,\infty)$ with ending lamination $L$. Choose some $p >0$ and consider the map $f_\infty: S(L)\rightarrow S(L)\times\{p\}$. Each cusp of $S(L)$ is mapped under $f_\infty$ to a cusp of $M_\infty$. Let $G_\infty$ be the union of the surfaces $S(L)$ when $L$ runs through all the components of $L_\infty$. We have thus constructed an embedding $f_\infty:G_\infty\rightarrow M_\infty$ which is homotopic to the inclusion map.

In Lemma \ref{geof},  we defined a map $f_\infty:F_\infty\rightarrow C_{\rho_\infty}$ which is a homeomorphism onto its image and is homotopic to the inclusion map. Now we have a map $f_\infty:F_\infty\cup G_\infty\rightarrow M_\infty$. The complementary regions of $F_\infty\cup G_\infty$ in $\partial_{\chi<0} M$ are annuli. By Lemmas \ref{geof} and \ref{ire}, the simple closed curve in the homotopy class defined by each of these annuli corresponds to a (maximal) parabolic conjugacy class of $\rho_\infty(\pi_1(M))$. Furthermore, since $\Gamma_\infty$ is doubly incompressible, to each such parabolic element of $\rho_\infty(\pi_1(M))$ there corresponds exactly one component of $\partial M-(F_\infty\cup G_\infty)$. To each such component (which is an annulus) corresponds two cusps of $F_\infty\cup G_\infty$ whose images under $f_\infty$ are two homotopic non-compact annuli lying in a cusp of $M_\infty$. Remove from $f_\infty(F_\infty\cup G_\infty)$ these two non-compact annuli and connect the boundary components so created by a compact annulus. Perform the same operation for all the components of $\partial_{\chi<0} M-(F_\infty\cup G_\infty)$. We get a compact surface $S_\infty\subset M_\infty$.

Change $f_\infty$ to get a homeomorphism $g_\infty:\partial_{\chi<0} M\rightarrow S_\infty$ (this only involves making the correct choice of the Dehn twisting in $\partial_{\chi<0} M-(F_\infty\cup G_\infty)$ so that $g_\infty$ is homotopic to the inclusion $\partial_{\chi<0} M\hookrightarrow M_\infty$). Adjoin to $S_\infty$ the boundary of the rank $2$ cusps of $M_\infty$ and extend $g_\infty$ to $\partial M$. Now we have a homeomorphism $g_\infty:\partial M\rightarrow S_\infty$ which is homotopic to the identity. Since $\partial M$ bounds a compact $3$-manifold, so does $g_\infty(\partial M)$. We deduce easily from this that $f_\infty (F_\infty\cup G_\infty)$ bounds a subset $C_\infty$ of $M_\infty$ which has a finite volume (compare with \cite[Lemme 21]{meister1}). Since $f_\infty(S(L_\infty))$ bounds an union of geometrically infinite ends and since $f_\infty(F_\infty)$ is an union of convex pleated surfaces, the union of $C_\infty$ and of the  geometrically infinite ends contains the convex core $C_{\rho_\infty}$ of $\rho_\infty$. If one component of $\partial C_{\rho_\infty}$ were to lie in ${\rm int}(C_\infty)$, the corresponding geometrically finite end would lie inside $C_\infty$. This would contradict the fact that $C_\infty$ has finite volume (see \cite[Lemme 21]{meister1} for more details). It follows that the union of $C_\infty$ and of the geometrically infinite ends of $M_\infty$ is the convex core of $\rho_\infty$.

Extend $g_\infty$ to a homotopy equivalence $h$ from $M$ to the compact set bounded by $g_\infty(\partial M)$. By \cite{bois}, $h$ is homotopic to a homeomorphism. It follows that $\rho_\infty$ uniformizes $M$.

\subsection{Domain of discontinuity and strong convergence}
\label{domain strong}

Now that we know the geometrically finite ends and the behavior of the $m_n$, we can deduce from \cite{gero} that the algebraically convergent sequence $\{\rho_n\}$ converges strongly. On the other hand, as we have already proved the convergence of the convex cores, it is not hard now to conclude directly that we have strong convergence. So we briefly describe here why the sequence $\{\rho_n\}$ converges strongly.

The main tool is a result of \cite{jorgmar} (see also \cite{jap}):
\begin{theoremnum}
Let $\{\rho_n\}\subset {\rm AH}(M)$ be a sequence of representations that converges algebraically to $\rho_\infty$. Assume that $\Omega_{\rho_\infty}$ is not empty. If $\{\Omega_{\rho_n} \}$ converges to $\Omega_{\rho_\infty}$ in the sense of Carath\'eodory, then $\{\rho_n\}$ converges strongly to $\rho_\infty$.
\end{theoremnum}

Recall that $\{\Omega_{\rho_n}\}$ converges to $\Omega_{\rho_\infty}$ in the sense of Carath\'eodory if and only if $\{ \Omega_{\rho_n}\}$ satisfies the two following conditions:
\begin{enumerate}[- ]
\item every compact subset $K\subset\Omega_{\rho_\infty}$ lies in $\Omega_{\rho_n}$ for all sufficiently large $n$;
\item every open set $O$ that lies in $\Omega_{\rho_n}$ for infinitely many $n$ also lies in $\Omega_{\rho_\infty}$.
\end{enumerate}

We show that the $\Omega_{\rho_n}$ and $\Omega_{\rho_\infty}$ satisfy these two conditions.

\begin{lemma}   \label{gege}
Under the hypotheses of Proposition \ref{algcon}, a subsequence of $\{\rho_n\}$ converges strongly to $\rho_\infty$.
\end{lemma}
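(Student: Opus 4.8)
The plan is to reduce the statement to the criterion of \cite{jorgmar} quoted just above: once we know that $\{\Omega_{\rho_n}\}$ converges to $\Omega_{\rho_\infty}$ in the sense of Carath\'eodory, strong convergence follows. I would first dispose of the degenerate case $\Omega_{\rho_\infty}=\emptyset$, which occurs exactly when $F_\infty=\emptyset$ and every end of $M_\infty$ is geometrically infinite; the criterion does not apply then, but strong convergence follows from \cite{gero} together with the description of the ends of $M_\infty$ given by Lemmas \ref{geof} and \ref{ire} and the analysis of $C_{\rho_\infty}$ carried out in the previous subsection. So from now on I would assume $\Omega_{\rho_\infty}\neq\emptyset$ and set about verifying the two bulleted Carath\'eodory conditions.

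The second condition --- every open set lying in $\Omega_{\rho_n}$ for infinitely many $n$ also lies in $\Omega_{\rho_\infty}$ --- is the easy one. It is equivalent to the inclusion $\Lambda_{\rho_\infty}\subset\liminf\Lambda_{\rho_n}$, and this holds for any algebraically convergent sequence: the loxodromic fixed points of the elements $\rho_\infty(g)$, $g\in\pi_1(M)$, are dense in $\Lambda_{\rho_\infty}$, and since $\rho_n(g)\to\rho_\infty(g)$ each such element is loxodromic for $n$ large with fixed points in $\Lambda_{\rho_n}$ converging to those of $\rho_\infty(g)$.

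The real content is the first condition: every compact $K\subset\Omega_{\rho_\infty}$ is contained in $\Omega_{\rho_n}$ for all large $n$, equivalently $\limsup\Lambda_{\rho_n}\subset\Lambda_{\rho_\infty}$. This is exactly the inclusion that fails for a general algebraically convergent sequence, and I would prove it using everything known about the convex cores. A point $z\in\Omega_{\rho_\infty}$ lies over a geometrically finite end of $M_\infty$ bounded by a component $F$ of $F_\infty$ carrying the conformal structure $m_\infty|_F$; for $n$ large one has $F\subset F_n$, and by the proof of Lemma \ref{geof} the convex pleated surfaces $f_n\colon F_n\to\partial C_{\rho_n}$ restricted to $F$ converge to the convex pleated surface $f_\infty\colon F\to\partial C_{\rho_\infty}$ while $m_n|_F\to m_\infty|_F$. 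The step I would carry out is to upgrade this to genuine geometric convergence of the geometrically finite ends together with their conformal boundaries at infinity, so that an entire neighbourhood of $z$ in $\widehat\C$ lies in $\Omega_{\rho_n}$ for $n$ large. Equivalently, working in $\Hp^3$, I would show that $H_{\rho_n}$ does not grow past $H_{\rho_\infty}$: outside a compact part, $M_n$ consists of the geometrically finite ends just discussed, whose geometry out to infinity is controlled by $f_n\to f_\infty$ and $m_n|_F\to m_\infty|_F$, and of geometrically infinite ends, whose geometry near the thick part is pinned down by the Hausdorff convergence $L_n\to L_\infty$ and by the bounded-length curves exiting them provided by Claim \ref{boundedsequence} and Lemma \ref{ire}; double incompressibility of $\Gamma_\infty$, already used to identify $\partial C_{\rho_\infty}$, prevents any residual pinching along the annuli of $\partial_{\chi<0}M-(F_\infty\cup G_\infty)$. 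With $\limsup\Lambda_{\rho_n}\subset\Lambda_{\rho_\infty}$ in hand, the first Carath\'eodory condition holds and \cite{jorgmar} gives strong convergence of $\{\rho_n\}$ to $\rho_\infty$.

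The hard part will be precisely this first Carath\'eodory condition, that is, showing that no new limit points are created in the geometric limit over the geometrically finite ends of $M_\infty$; it is here that the convergence of the convex pleated surfaces (Lemma \ref{geof}), the structure of the geometrically infinite ends (Lemma \ref{ire}), and the double incompressibility of $\Gamma_\infty$ must all be combined. Once that is settled, the remainder is a routine invocation of \cite{jorgmar}.
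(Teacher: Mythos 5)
Your overall strategy coincides with the paper's: dispose of the case $\Omega_{\rho_\infty}=\emptyset$ by citing \cite{gero} (the paper cites \cite{canbouts} via \cite{gero}), reduce the remaining case to the Carath\'eodory criterion of \cite{jorgmar}, and verify the second Carath\'eodory condition by the density of loxodromic fixed points in $\Lambda_{\rho_\infty}$ together with algebraic convergence. Those parts are correct and essentially identical to what is written in the paper.

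The gap is in the first Carath\'eodory condition, which you yourself single out as ``the hard part'' and then leave as a programme rather than a proof. You say you would ``upgrade'' the convergence of the pleated surfaces to geometric convergence of the geometrically finite ends with their conformal boundaries, or equivalently show that $H_{\rho_n}$ ``does not grow past'' $H_{\rho_\infty}$, invoking the structure of the geometrically infinite ends and double incompressibility --- but no mechanism is given for converting the convergence $f_n\to f_\infty$ of pleated surfaces into the statement that a given compact $K\subset\Omega_{\rho_\infty}$ lies in $\Omega_{\rho_n}$ for large $n$. The paper's argument here is short but relies on one specific idea that is absent from your proposal: \emph{convexity} of the pleated surface $f_\infty(F_\infty)=\partial C_{\rho_\infty}$ means that any support plane $\Pi$ meeting it in a non-degenerate subsurface has ideal boundary a round circle bounding a disc $D\subset\widehat\C$ with ${\rm int}(D)\subset\Omega_{\rho_\infty}$; a compact connected $K\subset\Omega_{\rho_\infty}$ is covered by the interiors of finitely many such discs $D_i$; and since $f_n\to f_\infty$, each $D_i$ is the limit of discs $D_{i,n}\subset\Omega_{\rho_n}$ bounded by the ideal boundaries of support planes of $f_n(F_n)$, so the $D_{i,n}$ cover $K$ for $n$ large. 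Note that this uses only Lemma \ref{geof} (convergence of the convex pleated surfaces over the geometrically finite ends); the geometrically infinite ends and double incompressibility, which your sketch leans on, play no further role at this stage. Without the support-plane device or a concrete substitute for it, the central step of the lemma is not proved.
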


\begin{proof}
When $\Omega_{\rho_\infty}$ is empty, $\{\rho_n\}$ converges strongly to $\rho_\infty$ by \cite{canbouts} (see \cite{gero}).  Assume now that $\Omega_{\rho_\infty}$ is not empty.

Consider the convex pleated surface $f_n:F_n\rightarrow C_{\rho_n}$. Let $\Pi$ be a hyperbolic plane that intersects $f_\infty(F_\infty)$ in a non-degenerate subsurface. The ideal boundary of $\Pi$ in $\widehat\C = \partial\overline{\Hp}^3$ is a circle which bounds a disc $D\subset\partial\overline{\Hp}^3$ such that  ${\rm int}(D)\subset \Omega_{\rho_\infty}$.

Let $K\subset\Omega_{\rho_\infty}$ be a compact connected subset. Such a compact set is covered by the interiors of finitely many discs $D_i$ defined as above. Since $\{ f_n\}$ converges to $f_\infty$, each such disc $D_i$ is the limit of a sequence $\{D_{i,n}\}$ where $D_{i,n}\subset \Omega_{\rho_n}$ is the disc bounded by the ideal boundary of a support plane for $f_n(F_n)$. It follows that for $n$ sufficiently large, $K$ is covered by the $D_{i,n}$. In particular, we have $K\subset\Omega_{\rho_n}$ for $n$ sufficiently large.

Now, we will prove, by contradiction, that every open set $O$ that lies in $\Omega_{\rho_n}$ for infinitely many $n$ also lies in $\Omega_{\rho_\infty}$.
Let $O$ be an open set lying in  $\Omega_{\rho_n}$ for infinitely many $n$ and let $w\in O\cap \Lambda_{\rho_{\infty}}$. Since the fixed points of hyperbolic isometries are dense in $\Lambda_{\rho_{\infty}}$ and since $\{\rho_n\}$ converges to $\rho_\infty$, there is a sequence $\{w_n\}$ of points converging to $w$ such that $w_n\in \Lambda_{\rho_n}$. For $n$ sufficiently large, we have $w_n\in O\cap \Lambda_{\rho_n}$, contradicting our hypothesis that $O\subset\Omega_{\rho_n}$ for infinitely many $n$.
\end{proof}

Finally, we have now proved that the ending map is proper, namely:

\begin{proposition}    \label{strocon}
Let $M$ be a compact, orientable, hyperbolizable $3$-manifold with incompressible boundary.  Let $\{\rho_n\}\subset {\rm AH}(M)$ be a sequence of representations uniformizing $M$ and let $\Gamma_n=(F_n,m_n,L_n)$ be the  end invariants of $\rho_n$. Assume that $\{\Gamma_n\}$ converges in ${\cal GA}(\partial M)$ to a doubly incompressible  gallimaufry $\Gamma_\infty=(F_\infty,m_\infty,L_\infty)$. Then a subsequence of $\{\rho_n\}$ converges strongly.  \hfill $\Box$
\end{proposition}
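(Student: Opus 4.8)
The plan is short, since the statement is essentially an assembly of results already established in Sections~\ref{algebric} and~\ref{strong}. First I would apply Proposition~\ref{algcon}: under the stated hypotheses a subsequence of $\{\rho_n\}$ converges algebraically to some representation $\rho_\infty$. A priori $\rho_\infty$ need only be a discrete faithful representation of $\pi_1(M)$, but the analysis of Section~\ref{strong} shows it uniformizes $M$, so it lies in ${\rm AH}(M)$. Replacing $\{\rho_n\}$ by this subsequence, I would then invoke Lemma~\ref{gege}, which is proved precisely under the hypotheses of Proposition~\ref{algcon}, to extract a further subsequence converging strongly to $\rho_\infty$. A subsequence of a subsequence being a subsequence, this gives the conclusion.

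All the real content is thus carried by Lemma~\ref{gege} and the structural input it uses, so let me recall how those fit together, since that is where any difficulty lies. Lemmas~\ref{geof} and~\ref{ire} describe the ends of $M_\infty=\Hp^3/\rho_\infty(\pi_1(M))$: each component $F$ of the moderate surface $F_\infty$ bounds a geometrically finite end carrying a convex pleated surface $f_\infty\colon F\to M_\infty$ obtained as an Arzel\`a--Ascoli limit of the convex pleated surfaces $f_n\colon H_n\to\partial C_{\rho_n}$, and each component $L$ of the immoderate lamination $L_\infty$ is the ending lamination of a geometrically infinite end homeomorphic to $S(L)\times[0,\infty)$. Reconstructing the convex core then shows $f_\infty(F_\infty\cup G_\infty)$ bounds a finite-volume region $C_\infty$ whose union with the geometrically infinite ends is $C_{\rho_\infty}$, from which $\rho_\infty$ uniformizes $M$.

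With that in hand I would follow Lemma~\ref{gege}: if $\Omega_{\rho_\infty}=\emptyset$, strong convergence is \cite{canbouts} (see \cite{gero}); otherwise one checks the two Carath\'eodory conditions for $\{\Omega_{\rho_n}\}\to\Omega_{\rho_\infty}$ --- a compact $K\subset\Omega_{\rho_\infty}$ is covered by finitely many discs cut off by support planes of $f_\infty(F_\infty)$, which are limits of discs cut off by support planes of the $f_n(F_n)$, so $K\subset\Omega_{\rho_n}$ for large $n$; and density of fixed points of hyperbolic elements in $\Lambda_{\rho_\infty}$ together with algebraic convergence rules out an open set lying in infinitely many $\Omega_{\rho_n}$ yet meeting $\Lambda_{\rho_\infty}$ --- and then the theorem of \cite{jorgmar} upgrades algebraic to strong convergence. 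The only genuine obstacle anywhere in this chain is the control of the limiting convex pleated surface $f_\infty$ and the ensuing convergence of support planes, but that is exactly what Lemmas~\ref{geof} and~\ref{gege} already supply; for the present proposition nothing new is required beyond concatenating Proposition~\ref{algcon} with Lemma~\ref{gege}.
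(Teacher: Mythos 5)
Your proposal is correct and follows exactly the paper's route: Proposition \ref{strocon} is stated with no separate proof precisely because it is the concatenation of Proposition \ref{algcon} (algebraic convergence of a subsequence) with the analysis of Section \ref{strong} culminating in Lemma \ref{gege} (upgrade to strong convergence via Carath\'eodory convergence of the domains of discontinuity and the J{\o}rgensen--Marden criterion). Your recap of the supporting chain --- Lemmas \ref{geof} and \ref{ire}, the reconstruction of the convex core showing $\rho_\infty$ uniformizes $M$, and the two Carath\'eodory conditions --- matches the paper's argument.
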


\section{Necessary conditions}
\label{continuity}

\label{the converse}
In this Section we  prove that the end invariants of a sequence $\{\rho_n\}$ converge to the end invariants of the limit.

\begin{proposition}     \label{concon}
Let $M$ be a compact, orientable, hyperbolizable $3$-manifold with incompressible boundary.  Let $\{ \rho_n\}$ be a sequence of representations that uniformize $M$. Assume that $\rho_n$ is geometrically finite and minimally parabolic for all $n$ and let $\Gamma_n=(\partial_{\chi<0} M,m_n,\emptyset)$ be its end invariants. If $\{\rho_n\}$ converges strongly to a representation $\rho_\infty$, then $\{\Gamma_n\}$ converges in ${\cal GA}(\partial M)$ to the end invariants $\Gamma_\infty$ of $\rho_\infty$.
\end{proposition}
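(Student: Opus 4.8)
The plan is to verify, directly and in turn, the three conditions (i), (ii), (iii) defining convergence of $\{\Gamma_n\}$ to $\Gamma_\infty$ in ${\cal GA}(\partial M)$. By Lemma \ref{topo} the strong limit $\rho_\infty$ again uniformises $M$, so its end invariants form a well-defined gallimaufry $\Gamma_\infty=(F_\infty,m_\infty,L_\infty)$. Since $F_n=\partial_{\chi<0} M$ for every $n$, the inclusion $F_\infty\subset F_n$ is automatic; and since $L_n=\emptyset$ and $\partial_{\chi<0} M-F_n=\emptyset$, the auxiliary laminations $L'_n$ are all empty, so the Hausdorff limit appearing in (ii) is empty and (ii) holds trivially. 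Every component of $L_\infty$ lies in every $F_n$, so (iii) must be established for each of them. Throughout I would keep in play the bi-Lipschitz geometric approximations $\phi_n\colon(M_\infty,x_\infty)\to(M_n,x_n)$ furnished by strong convergence (as in the proof of Lemma \ref{topo}) and the convex pleated surfaces $f_n\colon\partial_{\chi<0} M\to\partial C_{\rho_n}$.

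Condition (i) splits into a ``pinching'' part and a ``convergence'' part. Each component $c$ of $\partial\overline{F}_\infty$ is parabolic for $\rho_\infty$, so algebraic convergence gives $\ell_{\rho_n}(c^*)\to0$; since $\rho_n$ is minimally parabolic, $c^*$ is the core of a Margulis tube of radius tending to infinity, and standard results relating the geometry of a geometrically finite manifold to that of its conformal boundary (Sullivan, \cite{sull},\cite{sulli}) then yield $\ell_{m_n}(c)\to0$. For the second part one needs $\ell_{m_n}(c)\to\ell_{m_\infty}(c)$ for every non-peripheral $c\subset F_\infty$: when $\Omega_{\rho_\infty}\neq\emptyset$, strong convergence is equivalent to Carath\'eodory convergence $\Omega_{\rho_n}\to\Omega_{\rho_\infty}$ (\cite{jorgmar},\cite{jap}), and since $\rho_n\to\rho_\infty$ algebraically the Riemann surfaces carried by the components of $\Omega_{\rho_\infty}$ facing the geometrically finite ends of $M_\infty$ are the limits of the corresponding components of $\Omega_{\rho_n}/\rho_n(\pi_1(M))$, which gives $m_n|_F\to m_\infty|_F$ on each component $F$ of $F_\infty$; when $\Omega_{\rho_\infty}=\emptyset$ we have $F_\infty=\emptyset$ and there is nothing to prove. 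This part I expect to be essentially routine.

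Condition (iii) is the heart of the matter. Fix a component $L$ of $L_\infty$. By the definition of the end invariants of $\rho_\infty$ together with the Tameness Theorem (cf.\ Lemma \ref{ire}), $M_\infty$ has a geometrically infinite end $E_L\approx S(L)\times[0,\infty)$ with ending lamination $L$; in each $M_n$ the subsurface $S(L)\subset\partial_{\chi<0} M$ faces a geometrically finite end whose conformal structure is $m_n|_{S(L)}$, and strong convergence together with the tameness of $M_\infty$ forces that, between the convex pleated surface $f_n|_{S(L)}$ and the compact core of $M_n$, there sits a product region homeomorphic to $S(L)\times[0,T_n]$ with $T_n\to\infty$ whose geometry is modelled on the geometrically infinite end $E_L$. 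The goal is to show that $m_n|_{S(L)}$ tends to $L$, i.e.\ that any subsequence of $\{[m_n|_{S(L)}]\}$ has a further subsequence converging in ${\cal PML}(S(L))$ to a measured lamination carried by $L$. By Sullivan's bi-Lipschitz comparison this is equivalent to the same statement for the metrics induced on $f_n|_{S(L)}$. Arguing by contradiction, suppose a subsequence of these metrics tends to $[\nu]$ with $|\nu|$ not carried by $L$; since $L$ fills $S(L)$ this means $i(\nu,\mu)>0$ for a transverse measure $\mu$ supported on $L$. Choose simple closed curves $d_k\subset S(L)$ whose geodesic representatives $d_k^*\subset M_\infty$ leave every compact subset of $E_L$, with $\ell_{\rho_\infty}(d_k^*)$ bounded and $[d_k]\to[\mu]$ in ${\cal PML}(S(L))$ (\cite{bouts}). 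On the one hand, algebraic convergence gives $\ell_{\rho_n}(d_k^*)=\ell_{\rho_n}(d_k)\to\ell_{\rho_\infty}(d_k^*)$, so these lengths stay bounded uniformly in $n$ and $k$. On the other hand, a \emph{relative} version of Bonahon's Intersection Lemma --- the analogue, for an end modelled on the proper subsurface $S(L)\subset\partial M$, of the estimates of \cite{bouts}, obtained by running Bonahon's argument through the product region $S(L)\times[0,T_n]$ relative to $\partial\overline{S(L)}$, using noded pleated surfaces at the short boundary curves (\cite{mincom}), the efficiency of pleated surfaces (\cite[Theorem 3.3]{thuii}), and the foliation description of $\overline{{\cal T}(S(L))}$ from Section \ref{withboundary} --- would force $\ell_{\rho_n}(d_k^*)$ to grow once $k$ and then $n$ are large enough, a contradiction. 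Hence $m_n|_{S(L)}$ tends to $L$, and (iii) holds for every component of $L_\infty$; assembling (i), (ii) and (iii) then gives $\Gamma_n\to\Gamma_\infty$ in ${\cal GA}(\partial M)$.

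The difficulty I expect to be concentrated almost entirely in formulating and applying the relative Intersection Lemma of the previous paragraph --- in particular in controlling the geometry of the deepening product regions $S(L)\times[0,T_n]$ in the presence of the parabolic or short boundary curves $\partial\overline{S(L)}$ --- while the reduction and condition (i) are standard. An alternative, more formal route would combine the properness of the ending map (Proposition \ref{strocon}) with its injectivity (the Ending Lamination Theorem): if $\{m_n\}$ remains in a compact subset of ${\cal T}(\partial_{\chi<0} M)$ one concludes at once by Ahlfors--Bers continuity, while for an unbounded subsequence one would still need the same product-region analysis to show that $\{\Gamma_n\}$ subconverges in ${\cal GA}(\partial M)$ to a doubly incompressible gallimaufry, after which Proposition \ref{strocon} and injectivity identify the limit with $\Gamma_\infty$.
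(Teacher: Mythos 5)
Your overall architecture is the same as the paper's: condition (ii) is vacuous since the $\Gamma_n$ are interior points, condition (i) is obtained from Carath\'eodory convergence of the domains of discontinuity and the resulting uniform convergence of the Poincar\'e metrics (this is exactly Lemma \ref{cond1}), and condition (iii) is attacked via a relative version of Bonahon's intersection lemma applied to a sequence of bounded-length curves exiting the geometrically infinite end with ending lamination $L$ (the paper's $e_k$ and Lemma \ref{ineq}). One small caution on (i): deducing $\ell_{m_n}(c)\to 0$ for $c\subset\partial\overline{F}_\infty$ from $\ell_{\rho_n}(c^*)\to 0$ ``by Sullivan'' is not available, since the comparison between conformal length and geodesic length only goes one way ($\ell_{\rho_n}(c^*)\le 2\ell_{m_n}(c)$); the pinching must instead be read off from the convergence of the Poincar\'e metrics, where $c$ is homotopic into a puncture of $\Omega_{\rho_\infty}/\rho_\infty(\pi_1(M))$.

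The genuine gap is in how you propose to close condition (iii). You want the relative intersection lemma to ``force $\ell_{\rho_n}(d_k^*)$ to grow,'' i.e.\ to bound the geodesic length of $d_k$ in $M_n$ from \emph{below} by intersection data with the degenerating conformal structures. The intersection lemma only produces inequalities in the opposite direction: for a deep curve of bounded geodesic length it bounds $i(e_k,d)$ \emph{above} by (roughly) the conformal length of $d$ (Lemma \ref{ineq}: $i(e_k,d_n)\le Q(\ell_{m_n}(d_n)+2\pi)$ for $n\ge N_k$). There is no converse estimate --- a curve may have tiny geodesic length in $M_n$ while its $m_n$-length and its intersection with ${\cal F}_P(m_n)$ are enormous --- so no contradiction with the uniform boundedness of $\ell_{\rho_n}(d_k^*)$ can be extracted along the route you describe. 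The missing step is the one the paper supplies: feed Bers-short pants decompositions $P_n$ (so $\ell_{m_n}(P_n)$ is bounded) into the intersection bound to get $i(e_k,P_n)$ uniformly bounded; since $e_k\to\mu$ with $|\mu|=L$ irrational, this forces the Hausdorff limit of $\{P_n\}$ to contain $L$ (Claim \ref{shortpants}); the Collar Lemma then gives $\ell_{m_n}(c)\to\infty$ for every non-peripheral $c\subset S(L)$, and Lemma \ref{shortdegen} identifies the projective limit of $m_n|_{S(L)}$ as supported on $L$. Note also that your contradiction hypothesis presupposes that $m_n|_{S(L)}$ already diverges to some $[\nu]$ in Thurston's boundary; the possibility that these restricted metrics stay bounded in Teichm\"uller space must be excluded as well, and the pants-decomposition argument disposes of both cases at once.
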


\begin{proof}
First recall that we have seen in Lemma \ref{topo} that $\rho_\infty$ uniformizes $M$. Let us also recall the topology of ${\cal GA}(\partial M)$ as described in Section \ref{gall}. Consider our sequence of gallimaufries $\{\Gamma_n=(\partial_{\chi<0} M,m_n,\emptyset)\}$. Then $\{\Gamma_n\}$ converges to $\Gamma_\infty=(F_\infty,m_\infty,L_\infty)$ in ${\cal GA}(\partial M)$ if we have the following:
\begin{enumerate}[]
\item i) for every $n$ we have $F_\infty \subset F_n$ and the restrictions of the $m_n$ to $F_\infty$ converge to $m_\infty$, namely for every closed curve $c\subset F_\infty$, we have $\ell_{m_n}(c)\longrightarrow \ell_{m_\infty}(c)$,
\item iii) if a component $L$ of $L_\infty$ lies in infinitely many of the $F_n$, then the restrictions of the $m_n$ to $S(L)$ tend to $L$.
\end{enumerate}

Notice that since the $\Gamma_n$ lie in ${\rm int}({\cal GA}(\partial M))$ the definitions are simpler than in the general case.  In particular, part $(ii)$ of the definition is trivially satisfied.

The first property can be deduced from the convergence of the limit sets.

\begin{lemma}   \label{cond1}
Consider the moderate surface $F_\infty$ of $\Gamma_\infty$ equipped with its complete hyperbolic metric $m_\infty$. For every closed curve $c\subset F_\infty$, we have $\ell_{m_n}(c)\longrightarrow \ell_{m_\infty}(c)$.
\end{lemma}

\begin{proof}
Let $F$ be the component of $F_\infty$ containing $c$ and let $S$ be the component of $\partial M$ containing $F$. Since $\{ \rho_n\}$ converges strongly, as proved in \cite{kt} and \cite{ohshika-strong}, the limit sets $\{ \Lambda_{\rho_n}\}$ of the Kleinian groups $\rho_n(\pi_1(S))$ converge to the limit set of $\rho_\infty(\pi_1(S))$ in the Hausdorff topology. It follows that $\{\Omega_{\rho_n}\}$ converges to $\Omega_{\rho_\infty}$ in the sense of Carath\'eodory, and hence that the Poincar\'e metrics on the $\Omega_{\rho_n}$ converge to the Poincar\'e metric on $\Omega_{\rho_\infty}$.   By this, we mean that if we consider a point $x\in\Omega_{\rho_\infty}$, and a sequence $\{x_n\}\subset\Omega_{\rho_n}$ converging to $x$ (whose existence is guaranteed by the convergence of the $\Lambda_{\rho_n}$) the Poincar\'e metric on $\Omega_{\rho_n}$ at $x_n$ converges to the Poincar\'e metric on $\Omega_{\rho_\infty}$ at $x$, see \cite{hejhal}. 

Consider a component $O$ of $\Omega_{\rho_\infty}$ covering $F$ and an arc $\widetilde{c}$ that is a lift of $c$ to $O$.  Let $G_O\subset\pi_1(M)$ be the subgroup such that $\rho_\infty(G_O)$ is the stabilizer of $O$.  If we let $x$ be one endpoint of $\widetilde{c}$, then the other endpoint is the point $\rho_\infty(g)(x)$, where $\rho_\infty(g)$ is the element of $\rho_\infty(G_O)$ (up to conjugacy) corresponding to $c$.  Let $K$ be the closure of the $\varepsilon$-neighborhood of $\widetilde{c}$ in the Poincar\'e metric on $O$, and note that $K$ is a compact subset of $O$.  In particular, $K\subset \Omega_{\rho_n}$ for all $n$ sufficiently large.  Since the Poincar\'e metrics on the $\Omega_{\rho_n}$ converge to the Poincar\'e metric on $\Omega_{\rho_\infty}$ uniformly on compact subsets, since $\rho_n(g)(x)$ lies in $K$ for $n$ sufficiently large and since $\{\rho_n(g)(x)\}$ converges to $\rho_\infty(g)(x)$, we see that $\ell_{m_n}(c)\longrightarrow \ell_{m_\infty}(c)$, as desired.
\end{proof}

Next we prove that for a connected component $L$ of $L_\infty$, up to extracting a subsequence, the restrictions of the $m_n$ to $S(L)$ tend to a measured geodesic lamination supported by $L$.  To prove this property, we  use some of the ideas of the "lemme d'intersection" \cite[Proposition 3.4]{bouts}. Consider a component $L$ of the immoderate lamination of $\Gamma_\infty$. By definition of the end invariants of $\rho_\infty$, $L$ is the ending lamination of a geometrically infinite end of $M_\infty$. We want to show that, up to extracting a subsequence, the restrictions of the $m_n$ to $S(L)$ tend to a measured geodesic lamination supported by $L$.

\begin{lemma}           \label{cond2}
Consider a connected component $L$ of $L_\infty$. Then, up to extracting a subsequence, the restrictions of the $m_n$ to $S(L)$ tend to a measured geodesic lamination supported by $L$. 
\end{lemma}

Before starting the proof of Lemma \ref{cond2} recall that it finishes the proof of Proposition \ref{concon}.
Putting Lemmas \ref{cond1} and \ref{cond2} together yields that given a strongly convergent sequence $\{\rho_n\}$ of representations that uniformize $M$ with  end invariants $\Gamma_n=(\partial_{\chi<0} M,m_n,\emptyset)$, we have that $\{\Gamma_n\}$ converges in ${\cal GA}(\partial M)$ to the end invariants of the limit $\rho_\infty$ . This concludes the proof of Proposition \ref{concon}.
\end{proof}

\begin{proof}[Proof of Lemma \ref{cond2}]
Let $S$ be the connected component of $\partial M$ that contains $L$. Since $L$ is a minimal component of the immoderate part of $L_\infty$ it is the ending lamination of a geometrically infinite end of $M_\infty$. Then there is a sequence $\{e_k\}\subset S(L)$ of simple closed curves  such that $\{e_k\}$ converges in ${\cal PML}(\partial M)$ to a projective measured geodesic lamination supported by $L$, the geodesic representatives $e^*_{k,\infty}$ of $e_k$ in $M_\infty$ exit every compact subset of $M_\infty$, and $\{ \ell_{\rho_\infty}(e^*_{k,\infty}) \}$ is bounded. This follows directly from \cite[Lemma 7.9]{elc1} but one can also write a simpler proof using a sequence of pleated surfaces exiting the end  faced by $S(L)$ and Bonahon's Intersection Lemma \cite[Proposition 3.4]{bouts}.

Since $\{\rho_n\}$ converges strongly to $\rho_\infty$, there are sequences $q_n\longrightarrow 1$ and $R_n\longrightarrow\infty$, a point $x_\infty\in M_\infty$, and a sequence of diffeomorphisms $\psi_n:B(x_\infty,R_n)\subset M_\infty\rightarrow M_n$ such that $\psi_n(x_\infty)=x_n$ and that $\psi_n$ is a $q_n$-bilipschitz diffeomorphism onto its image. Choose $\eps$ such that for every $k$ either $e^*_{k,\infty}$ is disjoint from the $\eps$-thin part of $M_\infty$ or $e^*_{k,\infty}$ is the core of an $\eps$-Margulis tube.

Consider a sequence $\{P_n\}$ of pants decompositions  of $S$ such that $\{\ell_{m_n}(P_n)\}$ is a bounded sequence. We show that, up to extracting a subsequence, $\{P_n\}$ converges in the Hausdorff topology to a geodesic lamination containing $L$. Then we show that this can happen only when the $m_n$ degenerate transversely to $L$. First we control the intersection numbers between $P_n$ and $e_k$ using the following Lemma.

\begin{lemma}   \label{ineq}
Consider a sequence $\{d_n\}\subset S$ of simple closed curves. There is $Q>0$ such that for every $k$ sufficiently large there is $N_k$ such that  $i(e_k,d_n)\leq Q(\ell_{m_n}(d_n)+2\pi)$ for $n\geq N_k$.
\end{lemma}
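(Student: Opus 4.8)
The plan is to transfer the problem to the geometry of a single auxiliary surface. Fix $k$ and let $e^*_{k,\infty}$ be the geodesic representative of $e_k$ in $M_\infty$, of length at most some $B$, with $B$ independent of $k$ since $\{\ell_{\rho_\infty}(e^*_{k,\infty})\}$ is bounded. Because $\{\rho_n\}$ converges strongly to $\rho_\infty$, the $q_n$-bilipschitz maps $\psi_n$ carry $e^*_{k,\infty}$, for $n$ larger than some $N_k$, to a closed curve in $M_n$ homotopic to $e_k$ of length at most $q_nB$; hence $e_k$ has a geodesic representative $e^*_{k,n}$ in $M_n$ with $\ell_{\rho_n}(e^*_{k,n})\leq 2B$ for $n\geq N_k$, and, since $e^*_{k,\infty}$ exits every compact subset of $M_\infty$, $e^*_{k,n}$ lies deep in the corresponding geometrically infinite end of $M_n$. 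Now extend $d_n$ to a finite geodesic lamination $T_n$ triangulating $S$ and let $f_n\colon S\to M_n$ be a pleated surface (noded near the parabolics, in the sense of \cite{mincom}) homotopic to the inclusion and realizing $T_n$, so that $f_n$ maps $d_n$ to its geodesic $d_n^*$; equip $S$ with the induced metric $\sigma_n$. Then $\ell_{\sigma_n}(d_n)=\ell_{\rho_n}(d_n^*)\leq 2\ell_{m_n}(d_n)$ by \cite{ahl}, and by Gauss--Bonnet the area of $(S,\sigma_n)$ is $2\pi|\chi(S)|$; this last fact is the source of the $2\pi$ term, with $|\chi(S)|$ to be absorbed into $Q$.

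The core of the argument is then to bound $i(e_k,d_n)=\#\bigl(e_k^{\sigma_n}\cap d_n^{\sigma_n}\bigr)$, where $e_k^{\sigma_n}$ is the $\sigma_n$-geodesic representative of $e_k$, by a uniform multiple of $\ell_{\sigma_n}(d_n)+2\pi|\chi(S)|$. Fix a Margulis constant $\eps_0$ for surfaces homeomorphic to the components of $\partial M$. The intersection points lying in the $\eps_0$-thick part of $(S,\sigma_n)$ are pairwise separated along $d_n^{\sigma_n}$ by a definite distance $\delta=\delta(\eps_0)>0$ — two consecutive such points would otherwise bound a thin near-bigon, which cannot occur in the thick part — so there are at most $\ell_{\sigma_n}(d_n)/\delta+1$ of them, which is at most a uniform multiple of $\ell_{m_n}(d_n)+2\pi$. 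For the intersection points lying in the $\eps_0$-thin part one uses the Collar Lemma: $e_k^{\sigma_n}$ enters only the Margulis tubes of the $\sigma_n$-short geodesics that it actually crosses. Since $f_n$ realizes $T_n$, the $\sigma_n$-short geodesics are $M_n$-short, hence by strong convergence they are, for $n\geq N_k$, close to the cusps of $M_\infty$ and to curves lying deep in its geometrically infinite end; using that $e_k\subset S(L)$ is non-peripheral and that $e^*_{k,\infty}$ is deep in the end, this collection is controlled, and a Margulis-type estimate inside each relevant collar, together with the efficiency of pleated surfaces \cite[Theorem 3.3]{thuii}, bounds the remaining contribution.

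I expect the thin-part analysis to be the main obstacle, and this is precisely where the relative version of Bonahon's Intersection Lemma (\cite[Proposition 3.4]{bouts}) is needed: a priori a sequence $\{d_n\}$ could be chosen with $\ell_{m_n}(d_n)$ small while $i(e_k,d_n)$ is large, and excluding this uniformly in $k$ is not a purely two-dimensional matter — it uses that, for $n\geq N_k$, the short geodesic $d_n^*$ of $M_n$ and the geodesic $e^*_{k,n}$ deep in the geometrically infinite end are positioned so that $d_n$ cannot cross $e_k$ many times. Making Bonahon's argument work in this relative setting, with the subsurface $S(L)$ and the cusps of $M_\infty$ around $\partial\overline{S(L)}$, and with constants independent of $k$, is the technical heart of the lemma; the remaining estimates are routine applications of Gauss--Bonnet, the Collar Lemma, and \cite[Theorem 3.3]{thuii}.
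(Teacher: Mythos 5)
There is a genuine gap, and in fact the one concrete estimate you do supply is false. Your thick-part claim --- that the intersection points of $e_k^{\sigma_n}$ with $d_n^{\sigma_n}$ lying in the $\eps_0$-thick part are pairwise $\delta$-separated \emph{along} $d_n^{\sigma_n}$ --- does not hold: two nearby crossings on $d_n^{\sigma_n}$ typically come from two different strands of the (very long) geodesic $e_k^{\sigma_n}$, and the arc of $e_k^{\sigma_n}$ joining them has uncontrolled length, so no near-bigon is produced. Indeed, if it were true it would give $i(e_k,d_n)\leq \ell_{\sigma_n}(d_n)/\delta+1$ on an entirely thick surface, with a bound independent of $k$ and requiring no hypothesis $n\geq N_k$; but for fixed $n$ and $d_n$ crossing $L$ one has $i(e_k,d_n)\to\infty$ as $k\to\infty$ since $e_k$ converges projectively to a lamination supported on $L$. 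The honest two-dimensional thick-part bound is quadratic, of order $\ell_{\sigma_n}(d_n)\,\ell_{\sigma_n}(e_k)$, and $\ell_{\sigma_n}(e_k)$ is not controlled: the pleated surface $f_n$ realizes $d_n$, not $e_k$, so the bounded quantity $\ell_{\rho_n}(e_{k,n}^*)$ says nothing about the length of $e_k$ in the induced metric $\sigma_n$. This is exactly why the lemma cannot be proved on a single surface.

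The remaining content --- your thin-part discussion and the closing sentence that ``making Bonahon's argument work in this relative setting \dots is the technical heart of the lemma'' --- defers the entire point of the statement to an argument you do not carry out. For the record, the paper's proof is genuinely three-dimensional and does not pass through a surface intersection count at all: one chooses compact cores $K_n=\psi_n(K_\infty)\subset C_{\rho_n}$, notes that $e_{k,n}^*$ has uniformly bounded length and satisfies ${\rm d}(e_{k,n}^*,K_n)\geq C_k\to\infty$, and bounds $i(e_k,d_n)$ by $i(e_{k,n}^*,A_{d_n})$ via Bonahon's Lemme 3.2, where $A_{d_n}$ is an annulus joining $\psi_n(d_n)$ to $f_n(d_n)\subset\partial C_{\rho_n}$ built by coning from a point $x_n$ near $K_n$. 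The part of $A_{d_n}$ near the core is missed by $e_{k,n}^*$ for $k$ large, and the rest, $H_n=F_n\cup G_n$, has ${\rm area}(H_n)\leq \ell(f_n(d_n))+2\pi$ (the $2\pi$ is the area of two geodesic triangles, not a Gauss--Bonnet term on $S$); a volume-packing argument in a tube of definite radius about the bounded-length geodesic $e_{k,n}^*$ then bounds the number of its intersections with $H_n$ by a uniform multiple of ${\rm area}(H_n)$, and $\ell(f_n(d_n))\leq 2\ell_{m_n}(d_n)$ finishes. If you want to salvage your write-up, you would need to actually execute this three-dimensional packing argument; the surface-level reductions you propose cannot produce a constant $Q$ independent of $k$.
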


\begin{proof}
The basis of this proof comes from \cite[Proposition 3.4]{bouts}.

Let $K_\infty\subset C_{\rho_\infty}\subset M_\infty=\Hp^3/\rho_\infty(\pi_1(M))$ be a compact core for $M_\infty$. Since $\{\rho_n\}$ converges strongly to $\rho_\infty$, for $n$ sufficiently large $\psi_n(K_\infty)$ is a compact core for $M_n$ (see Lemma \ref{topo}). By \cite{kt} or \cite{ohshika-strong}, $\{ \Lambda_{\rho_n}\}$ converges to $\Lambda_{\rho_\infty}$ in the Hausdorff topology. By \cite{brian}, the convex hulls $\{ H_{\rho_n}\}\subset \Hp^3$ converge to $H_{\rho_\infty}\subset\Hp^3$ in the Hausdorff topology. It follows that we have $\psi_n(K_\infty)\subset C_{\rho_n}$ for $n$ large enough. We use the notation $K_n=\psi_n(K_\infty)$. Notice that the induced metrics on the $\partial K_n$ converge to the induced metric on $\partial K_\infty$.

Recall that $e^*_{k,n}$ is the geodesic representative of $e_k$ in $M_n$. Since $e^*_{k,\infty}$ exits every compact set, ${\rm d}(e^*_{k,\infty},K_\infty)\geq 3C_k$ for some $C_k\longrightarrow\infty$. Fix $k$. For $n$ sufficiently large, $e^*_{k,\infty}$ lies in $B(x_\infty,R_n)\subset M_\infty$. Furthermore $\ell_{\rho_n}(\psi_n(e^*_{k,\infty}))\leq q_n \ell_{\rho_\infty}(e^*_{k,\infty})$ where $q_n$ is the bilipschitz constant of $\psi_n$. So  $\ell (e^*_{k,n})\leq q_n \ell_{\rho_\infty}(e^*_{k,\infty})$ for $n$ sufficiently large. Since the length of $e^*_{k,\infty}$ is bounded, there are $Q_1$ and $N_k$ such that for $n\geq N_k$, we have $\ell_{\rho_n}(e^*_{k,n})\leq Q_1$ (note that $Q_1$ depends on the sequence $\{ e_k\}$ but not on $k$). For $n$ sufficiently large, ${\rm d}(\psi_n(e^*_{k,\infty}),K_n)\geq 2C_k$. Since $\psi_n$ is $q_n$-bilipschitz, $\psi_n(e^*_{k,\infty})$ is a $q_n^2$-quasi-geodesic. It follows that ${\rm d}(e^*_{k,n},K_n)\geq C_k$ for $n$ sufficiently large.

In what follows, we view $d_n$ and $e_k$ as simple closed curves in $S\subset \partial K_\infty$. Let $f_n:S \rightarrow\partial C_{\rho_n}$ be the map sending $S\subset\partial M$ to the corresponding part of the boundary of the convex core of $M_n=\Hp^3/\rho_n(\pi_1(M))$. Consider an annulus $A_{d_n}$ joining $\psi_n(d_n)$ to $f_n(d_n)$ in $C_{\rho_n}$ and an annulus $A_{e_{k,n}}$ joining $\psi_n(e_k)$ to $e^*_{k,n}$ in ${\rm int}(C_{\rho_n})$. Since $K_n$ is a compact core for $M_n$ and $\partial M$ is incompressible, $M_n-K_n$ is homeomorphic to $\partial M\times\R$ (see \cite{mms}). It follows that we can choose $A_{d_n}$ and $A_{e_{k,n}}$ so that they intersect $K_n$ only along $\psi_n(d_n)$ and $\psi_n(e_k)$ respectively. By \cite[Lemme 3.2]{bouts}, we have $i(e_k,d_n)\leq i(f_n(d_n), A_{e_{k,n}})+i(e^*_{k,n}, A_{d_n})$ where the first term is the geometric intersection number in $S$ and the two others are geometric intersection numbers in $M_n$ (as defined in \cite[\textsection III]{bouts}). Since $e^*_{k,n}\subset{\rm int}(C_{\rho_n})$, we can choose $A_{e_{k,n}}$ so that it lies in ${\rm int}(C_{\rho_n})$. In particular $A_{e_{k,n}}$ does not intersect $\partial C_{\rho_n}$. It follows then that  $i(f_n(d_n), A_{e_{k,n}})=0$. Hence we only need to bound $i(e^+_{k,n},A_{d_n})$.

Notice that $i(e^*_{k,n},A_{d_n})$ is invariant by homotopy (of $e^*_{k,n}$ or $A_{d_n}$) as long as $e^*_{k,n}$ does not intersect $\partial A_{d_n}$ through the homotopy (\cite[Lemma 3.1]{bouts}). We will now modify $A_{d_n}$ to make computation easier. For each $n$, we choose a point $t_n\in A_{d_n}$ such that $\{{\rm d}(K_n,t_n)\}$ is a bounded sequence. We also choose points $y_n\in\psi_n(d_n)$ and $z_n\in f_n(d_n)$. Start with the annulus $A_{d_n}\subset C_{\rho_n}$ joining $\psi_n(d_n)$ to $f_n(d_n)$ that was used in the previous paragraph. Let us change $A_{d_n}$ by a homotopy so that it contains $t_n$ and a geodesic segment $k_n$ joining $y_n$ to $z_n$. Lift this annulus to an infinite band $\tilde A_{d_n}$ in the universal cover $\Hp^3$ of $M_n$. The annulus $A_{d_n}$ is the quotient of $\tilde A_{d_n}$ under a covering transformation $\rho_n(a_n)$. The part of $\tilde A_{d_n}$ between two lifts $\tilde k_n$ and $\tilde k'_n=\rho_n(a_n)(\tilde k_n)$ of $k_n$ is a disc $\tilde D_n$ containing a lift $\tilde t_n$ of $t_n$. We change $\tilde D_n$ by a homotopy to the geodesic cone from $\tilde t_n$ to $\partial\tilde D_n$. Then we replace $\tilde A_{d_n}$ by $\bigcup_{j\in\Z}\rho_n(a_n^j)(\tilde D_n)$ and $A_{d_n}$ by the quotient of this new band. The new annulus $A_{d_n}$ is the union of the set $E_n$ made up of geodesic segments joining $\psi_n(d_n)$ to $t_n$, of the set $F_n$ made up of geodesic segments joining $k_n$ to $t_n$ and of the set $G_n$ of geodesic segments joining $f_n(d_n)$ to $t_n$. Notice that $\partial A_{d_n}$ has not been moved during these homotopies, hence $i(e^*_{k,n},A_{d_n})$ has not changed. According to the previous paragraph, we have $i(e_k,d_n)\leq i(e^*_{k,n}, A_{d_n})$.

By construction, for $n$ sufficiently large, $E_n\subset \psi_n(B(x_\infty,r_n))$ and $\psi_n^{-1}(E_n)$ lies in a compact set (independent of $n$). It follows that for $k$ sufficiently large, $e^*_{k,n}$ does not intersect $E_n$. Thus for $k$ sufficiently large, $e^*_{k,n}$ only intersects $F_n\cup G_n$. Since $d(\psi_n^{-1}(E_n),e^*_{k,\infty})\longrightarrow \infty$ as $k\longrightarrow\infty$, we even have some $\eps>0$ such that for $n$ sufficiently large the intersections between $e_{k,n}^*$ and $F_n\cup G_n$ are at distance at least $\eps$ from the two geodesic segments in $F_n\cup G_n$ joining $t_n$ to $y_n$. Furthermore, since $\{ H_{\rho_n}\}$ converges to $H_{\rho_\infty}$ in the Hausdorff topology, we have $d(e^*_{k,n},\partial C_{\rho_n})\geq\eps$, for $n$ large enough. Thus the intersections between $e_{k,n}^*$ and $F_n\cup G_n$ are at distance at least $\eps$ from $\partial (F_n\cup G_n)$.

By construction, $F_n$ is the union of two geodesic triangles. Thus we have ${\rm area}(F_n)\leq 2\pi$. Since $f_n(d_n)$ is piecewise geodesic, $G_n$ is a union of geodesic triangles. It is well known that the area of a hyperbolic triangle is less than the length of each of its edges, see \cite[Lemma 9.3.2]{notes}. Thus we have  ${\rm area}(G_n)\leq \ell (f_n(d_n))$.

From the proof of \cite[Proposition 3.4]{bouts}, we get:

\begin{enumerate}[-]
\item If $\ell (e^*_{k,n})>\eps$, then $i(e_k,d_n)\leq Q_2^{-1} \ell_{\rho_n} ( e^*_{k,n}){\rm area}(F_n\cup G_n)$, where $Q_2$ is the volume of the ball with radius $\frac{\eps}{8}$ in $\R\times\Hp^2$;
\item If $\ell (e^*_{k,n})\leq\eps$, then we have $i(e_k,d_n)\leq Q_3^{-1}{\rm area}(F_n\cup G_n)$, where $Q_3$ is the area of a hyperbolic disc with radius $\frac{\eps}{8}$.
\end{enumerate}

We have seen above the inequality 
$${\rm area}(F_n\cup G_n)\leq \ell (f_n(d_n))+2\pi$$
By \cite{ahl}, we have $\ell (f_n(d_n))\leq 2\ell_{m_n}(d_n)$. Consider $Q=\max\{2Q_1Q_2^{-1},2 Q_3^{-1}\}$ and notice that $Q$ depends on the choice of the sequence $e_k$. For $k$ sufficiently large and for $n$ sufficiently large (depending on $k$), we have  $i(e_k,d_n)\leq Q (\ell_{m_n}(d_n)+2\pi)$.
\end{proof}

We can now show that every sequence of short pants decompositions, with respect to the $m_n$, converges to $L$. More precisely, we have:

\begin{claim}           \label{shortpants}
Let $\{P_n\}$ be a sequence of pants decompositions of $S$ such that $\{\ell_{m_n}(P_n)\}$ is a bounded sequence. Extract a subsequence such that $\{ P_n\}$ converges in the Hausdorff topology to a geodesic lamination $P_\infty\subset S$. Then $L$ is a sublamination of $P_\infty$.
\end{claim}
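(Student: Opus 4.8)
The plan is to argue by contradiction: suppose $L\not\subset P_\infty$. Since $L$ is a connected component of $L_\infty$ it is a minimal irrational lamination with no closed leaf, and it fills $S(L)$; in particular $L$ either crosses $P_\infty$ or is disjoint from it. The first preparatory step is to extract a uniform bound on $i(e_k,P_n)$. I would write $P_n=d_n^1\cup\cdots\cup d_n^N$, where $N$ is the fixed number of curves in a pants decomposition of $S$, and apply Lemma \ref{ineq} to each sequence $\{d_n^j\}_n$. Since the constant $Q$ produced by that lemma depends only on the sequence $\{e_k\}$ (not on the curves to which it is applied) and since $\ell_{m_n}(d_n^j)\le\ell_{m_n}(P_n)$ is bounded, this yields a single $Q'>0$ and indices $N_k$ with $i(e_k,P_n)=\sum_j i(e_k,d_n^j)\le Q'$ for all $n\ge N_k$. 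Next, since $\{e_k\}$ converges in ${\cal PML}(\partial M)$ to a lamination supported by the minimal irrational $L$, I would pick weights $t_k>0$ with $t_k e_k\to\lambda$ in ${\cal ML}(S)$ and $|\lambda|=L$; irrationality of $L$ forces $t_k\to 0$. Passing to a subsequence I may also assume the geodesic representatives $e_k^*$ converge in the Hausdorff topology to a lamination $L''$, so that $L\subset L''$, $L$ is the recurrent part of $L''$, and no leaf of $L''$ crosses $L$. Finally, because Lemma \ref{ineq} only controls $i(e_k,P_n)$ for $n\ge N_k$ with $N_k\to\infty$, I would pass to a diagonal subsequence $P_{n_k}$ with $n_k\uparrow\infty$ and $n_k\ge N_k$; it still converges to $P_\infty$ in the Hausdorff topology and satisfies $i(e_k,P_{n_k})\le Q'$ for every $k$.

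The heart of the argument — and the step I expect to be the main obstacle — is to show that $L$ cannot cross $P_\infty$: if some leaf $\ell$ of $L$ crossed some leaf $g$ of $P_\infty$ transversally, at a point $y$, then $i(e_k,P_{n_k})$ would be forced to tend to infinity. I would work in a small square chart around $y$ in which every leaf of $L''$ meeting the chart appears as a nearly vertical arc joining top to bottom, while $g$ appears as a single nearly horizontal arc joining left to right. On one hand, Hausdorff convergence $P_{n_k}\to P_\infty$ forces $P_{n_k}$ to contain, for $k$ large, a nearly horizontal left-to-right arc across the chart; on the other hand, convergence $t_k e_k\to\lambda$ with $|\lambda|=L$ of full support forces $e_k^*$ to cross a short transversal to $\ell$ at $y$ in on the order of $1/t_k$ points, and these (since $e_k^*\to L''$ and leaves of $L''$ near $y$ cannot cross $\ell$) lie on on the order of $1/t_k$ distinct nearly vertical top-to-bottom arcs of $e_k^*$ inside the chart; each such arc meets the horizontal $P_{n_k}$-arc inside the chart, so $i(e_k,P_{n_k})$ grows at least like $1/t_k$, contradicting $i(e_k,P_{n_k})\le Q'$. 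The delicate point is to run this local count rigorously while juggling two genuinely different topologies — the weak-$*$ topology on transverse measures (for the $e_k$) and the Hausdorff topology (for the $P_n$) — in the neighbourhood of a single intersection point, together with the diagonal bookkeeping; I expect this to be where the real work lies.

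It remains to upgrade ``$L$ does not cross $P_\infty$'' to ``$L\subset P_\infty$''. Here I would first record the elementary fact that no complementary region $R$ of $P_\infty$ can contain an essential simple closed curve $c$ that is non-peripheral in $R$: such a $c$ lies at positive distance from $P_\infty$, hence is disjoint from $P_n$ for $n$ large, hence is homotopic into a pair of pants of $S\setminus P_n$ and therefore homotopic to a curve of $P_n$; since $P_n\to P_\infty$ in the Hausdorff topology, $c$ would then be homotopic to a leaf of $P_\infty$, i.e. peripheral in $R$ — a contradiction.

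Granting this, suppose $L\not\subset P_\infty$. Since $L$ does not cross $P_\infty$, the lamination $L\cup P_\infty$ is geodesic and some leaf $\ell_0$ of $L$ lies in the interior of a complementary region $R$ of $P_\infty$; then $W(L)\subset\overline{R}$ and, since the curves of $\partial R$ are disjoint from $L$, one gets $S(L)\subset R$. But $L$ is irrational, so $S(L)$ is not a disc, annulus or pair of pants and hence contains an essential simple closed curve non-peripheral in $S(L)$, and so non-peripheral in $R$; this contradicts the previous paragraph. Therefore $L\subset P_\infty$, which is the assertion of Claim \ref{shortpants}.
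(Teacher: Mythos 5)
Your proof is correct and follows essentially the same route as the paper's: bound $i(e_k,P_n)$ via Lemma \ref{ineq}, rule out $P_\infty$ crossing $L$ because irrationality of $L$ and the projective convergence of the $e_k$ would force $i(e_k,P_{n_k})\to\infty$, and rule out disjointness because the complementary regions of a Hausdorff limit of pants decompositions carry no essential non-peripheral simple closed curve. The paper compresses the crossing step and the disjointness step into one sentence each; your version merely supplies the diagonal subsequence and the local flow-box count that the paper leaves implicit.
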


\begin{proof}
Since $L$ is an irrational geodesic lamination, if $P_\infty$ crosses $L$, then we have $i(e_k,P_n)\longrightarrow\infty$. By Lemma \ref{ineq}, $\{ i(e_k,P_n)\}$ is bounded. It follows that either $L$ is disjoint from $P_\infty$ or $L$ is a sublamination of $P_\infty$. Since $P_n$ is a pants decomposition for every $n$, no geodesic lamination is disjoint from $P_\infty$. So we conclude that $L$ is a sublamination of $P_\infty$.
\end{proof}

Now we can conclude the proof of Lemma \ref{cond2}. As we have seen, there is a constant $K$ depending only on $S$ such that for every complete hyperbolic metric $m$ on $S$, there is a pants decomposition $P_S$ with $\ell_m(P_S)\leq K$ (see \cite{bers}). Consider a sequence $\{P_n\}$ of pants decompositions such that $\{ \ell_{m_n}(P_n)\}$ is bounded. Extract a subsequence such that $\{ P_n\}$ converges in the Hausdorff topology to a geodesic lamination $P_\infty$. By Claim \ref{shortpants}, $L$ is a sublamination of $P_\infty$. Let $c\subset S(L)$ be a non-peripheral simple closed curve. Since $c$ crosses $L$, we have $i(P_n,c)\longrightarrow\infty$. It follows then from the Collar Lemma that  $\ell_{m_n}(c)\longrightarrow\infty$. In particular the restrictions of the $m_n$ to $S(L)$ are unbounded. By Lemma \ref{shortdegen}, the restrictions of the $m_n$ to $S(L)$ tend to $L$. This concludes the proof of Lemma \ref{cond2} and of Proposition \ref{concon}.
\end{proof}

Using the Ending Lamination Classification and Propositions \ref{strocon} and \ref{concon}, we can now prove Theorem \ref{conf}.

\setcounter{theorem}{3}

  \begin{theorem} 
 Let $M$ be a compact, orientable, hyperbolizable $3$-manifold with incompressible boundary.
 The ending map that to a representation uniformizing $M$ associates its  end invariants is a homeomorphism from ${\rm SH}(M)$ into the set of doubly incompressible gallimaufries.
 \end{theorem}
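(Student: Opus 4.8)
The ending map $\Phi\colon{\rm SH}(M)\to{\cal GA}(\partial M)$ takes values in the set $\mathcal{DI}$ of doubly incompressible gallimaufries on $\partial M$ by \cite{bouts} and \cite{canbouts}, and is injective by the Ending Lamination Classification, \cite{elc1} and \cite{elc2}. The plan is to prove that $\Phi$ is continuous, that $\Phi({\rm SH}(M))=\mathcal{DI}$, and that $\Phi^{-1}$ is continuous. Properness of $\Phi$ is Proposition \ref{strocon}; I would first record the sharper statement that its proof actually yields: if $\{\Gamma_n\}$ converges in $\mathcal{DI}$ to $\Gamma_\infty=(F_\infty,m_\infty,L_\infty)$ and $\rho_n=\Phi^{-1}(\Gamma_n)$, then the strongly convergent subsequence of $\{\rho_n\}$ provided by Proposition \ref{strocon} converges to a representation $\rho_\infty$ uniformizing $M$ (by Lemma \ref{topo}) with $\Phi(\rho_\infty)=\Gamma_\infty$: indeed Lemma \ref{geof} together with the Carath\'eodory convergence of the domains of discontinuity (Lemma \ref{gege}) matches the geometrically finite ends of $M_{\rho_\infty}$ and their conformal structures with $(F_\infty,m_\infty)$, while Lemma \ref{ire} matches the geometrically infinite ends and their ending laminations with the components of $L_\infty$. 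Call this the \emph{sharp form} of Proposition \ref{strocon}. For surjectivity, given $\Gamma=(F,m,L)\in\mathcal{DI}$ I would build a sequence $\{m_n\}\subset{\cal T}(\partial_{\chi<0}M)$ with $(\partial_{\chi<0}M,m_n,\emptyset)\to\Gamma$ in ${\cal GA}(\partial M)$ by the standard Teichm\"uller-space recipe: pinch the curves of $\partial\overline{F}$, converge to $m$ on $F$, and let the restriction to the surface embraced by $L$ degenerate transversely to $L$, so that conditions (ii) and (iii) of convergence in ${\cal GA}(\partial M)$ hold trivially or by construction and (i) holds by construction. By the Ahlfors--Bers parametrization ${\rm int}({\rm SH}(M))\cong{\cal T}(\partial_{\chi<0}M)$, each $m_n$ is realized by a geometrically finite minimally parabolic $\rho_n\in{\rm SH}(M)$, so $\Phi(\rho_n)\to\Gamma$, and the sharp form of Proposition \ref{strocon} produces $\rho_\infty\in{\rm SH}(M)$ with $\Phi(\rho_\infty)=\Gamma$.

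The same mechanism gives that ${\rm int}({\rm SH}(M))$ is dense in ${\rm SH}(M)$ for the strong topology: any $\rho\in{\rm SH}(M)$ has $\Phi(\rho)\in\mathcal{DI}$, which is approximated as above by $\Phi(\sigma_n)$ with $\sigma_n$ geometrically finite and minimally parabolic, and the sharp form of Proposition \ref{strocon} together with injectivity forces a subsequence of $\{\sigma_n\}$ to converge strongly to $\rho$. Continuity of $\Phi$ then follows from Proposition \ref{concon}, which handles sequences in ${\rm int}({\rm SH}(M))$, by a diagonal argument. Suppose $\rho_n\to\rho_\infty$ strongly in ${\rm SH}(M)$; for each $n$, pick $\sigma_{n,k}\in{\rm int}({\rm SH}(M))$ with $\sigma_{n,k}\to\rho_n$ strongly as $k\to\infty$, so that Proposition \ref{concon} gives $\Phi(\sigma_{n,k})\to\Phi(\rho_n)$. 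Using that the strong topology and the topology of ${\cal GA}(\partial M)$ are metrizable, choose $k(n)$ so that $\sigma_{n,k(n)}$ is strongly $\tfrac1n$-close to $\rho_n$ and $\Phi(\sigma_{n,k(n)})$ is within $\tfrac1n$ of $\Phi(\rho_n)$. Then $\sigma_{n,k(n)}\to\rho_\infty$ strongly, Proposition \ref{concon} gives $\Phi(\sigma_{n,k(n)})\to\Phi(\rho_\infty)$, and therefore $\Phi(\rho_n)\to\Phi(\rho_\infty)$.

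For the inverse, suppose $\Gamma_n\to\Gamma_\infty$ in $\mathcal{DI}$ and set $\rho_n=\Phi^{-1}(\Gamma_n)$. By the sharp form of Proposition \ref{strocon}, every subsequence of $\{\rho_n\}$ has a further subsequence converging strongly to a representation $\rho'$ uniformizing $M$ with $\Phi(\rho')=\Gamma_\infty$; injectivity gives $\rho'=\Phi^{-1}(\Gamma_\infty)$, so the whole sequence converges strongly to $\Phi^{-1}(\Gamma_\infty)$ and $\Phi^{-1}$ is continuous. Hence $\Phi$ is a homeomorphism of ${\rm SH}(M)$ onto $\mathcal{DI}$. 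The main obstacle in this scheme is the continuity of $\Phi$ --- concretely, controlling how the ending laminations of the geometrically infinite ends move under strong limits; the diagonal reduction above serves only to concentrate that difficulty in Proposition \ref{concon}, whose proof rests on the relative intersection estimate Lemma \ref{ineq} and Claim \ref{shortpants}, a relative form of Bonahon's intersection lemma.
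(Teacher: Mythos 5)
Your proposal follows essentially the same route as the paper: injectivity from the Ending Lamination Classification, properness from Proposition \ref{strocon} (whose proof, as you correctly note, actually identifies the end invariants of the strong limit via Lemmas \ref{geof} and \ref{ire} and the reconstruction of the convex core), density of the geometrically finite minimally parabolic representations in ${\rm SH}(M)$, and continuity by reducing to Proposition \ref{concon} through a diagonal approximation, with inverse continuity then following from properness plus injectivity. The only genuine addition is your surjectivity claim onto all of $\mathcal{DI}$; the theorem as stated only asserts a homeomorphism onto the image, so this is not needed, though your realization argument (approximate $\Gamma$ by metrics in ${\cal T}(\partial_{\chi<0} M)$, realize these by Ahlfors--Bers, and take a strong limit) is sound and consistent with the paper's machinery.
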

 
 \begin{proof}
 By the Ending Lamination Classification, see \cite{elc1} and \cite{elc2}, the ending map is injective. By Proposition \ref{strocon}, this map is proper. Combining Propositions \ref{strocon} and \ref{concon}, we get that a sequence $\{\rho_n\}\subset {\rm SH}(M)$ with end invariants $\{\Gamma_n=(\partial_{\chi<0} M, m_n,\emptyset)\}$ converges strongly to $\rho_\infty$ with end invariants $\Gamma_\infty=(F_\infty, m_\infty, L_\infty)$ if and only if $\{\Gamma_n\}$ converges to $\Gamma_\infty$ in ${\cal GA}(\partial M)$. In particular it follows that every $\rho_\infty\in {\rm SH}(M)$ can be approximated by geometrically finite minimally parabolic representations in the strong topology, namely ${\rm SH}(M)$ is the closure of its interior. By using such approximations we can drop the assumption in Proposition \ref{concon} that $\rho_n$ is geometrically finite and minimally parabolic. It follows that the ending map is continuous. Thus the ending map is injective, continuous and proper: it is a homeomorphism onto its image.
 \end{proof}

From this Theorem and from Lemma \ref{locom2}, we deduce that ${\rm SH}(\pi_1(M))$ is not locally compact (Lemma \ref{locom}).

\setcounter{section}{1}
\setcounter{proposition}{0}

\begin{lemma}          
Let $M$ be a compact, orientable, hyperbolizable $3$-manifold with incompressible boundary. Then the space ${\rm SH}(M)$ is not locally compact.\hfill $\Box$
\end{lemma}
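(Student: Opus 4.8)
The plan is to transport the argument of Lemma~\ref{locom2} through the homeomorphism of Theorem~\ref{conf}. By that theorem the ending map identifies ${\rm SH}(M)$ with a set ${\cal DI}(\partial M)$ of doubly incompressible gallimaufries on $\partial M$, equipped with the subspace topology inherited from ${\cal GA}(\partial M)$; moreover ${\rm int}({\rm SH}(M))={\cal T}(\partial_{\chi<0} M)$ corresponds to the gallimaufries of the form $(\partial_{\chi<0} M,m,\emptyset)$. Interior points of ${\rm SH}(M)$ have compact neighbourhoods (${\rm int}({\rm SH}(M))$ is open in ${\rm SH}(M)$ and is homeomorphic to a Teichm\"uller space, hence locally compact), so it suffices to produce one \emph{boundary} point of ${\rm SH}(M)$ that has no compact neighbourhood in ${\cal DI}(\partial M)$.

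First I would choose the point. Because $\partial_{\chi<0} M\neq\emptyset$, the space ${\rm SH}(M)$ contains a geometrically finite representation $\rho_\infty$ having at least one rank-one cusp and no geometrically infinite end; this is classical, cf.~\cite{thui} and~\cite{marden}. Its end invariant is a gallimaufry $\Gamma_\infty=(F_\infty,m_\infty,\emptyset)$, and since the end invariants of any hyperbolic $3$-manifold form a doubly incompressible gallimaufry (see~\cite{bouts},~\cite{canbouts}), we have $\Gamma_\infty\in{\cal DI}(\partial M)$. The complement $\partial_{\chi<0} M\setminus F_\infty$ is a non-empty union of annuli; fix one of them, $A$, with core curve $c$, an essential non-peripheral simple closed curve in $\partial_{\chi<0} M$. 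The key feature is that every simple closed curve contained in $F_\infty$ is disjoint from $A$, hence from $c$; consequently the Dehn twist $\phi$ along $c$ fixes, up to isotopy, every curve carried by $F_\infty$, while $\ell_{\phi^k_*m}(c)=\ell_m(c)$ for every metric $m$ and every $k$.

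Then I would rerun the construction of Lemma~\ref{locom2} inside ${\cal DI}(\partial M)$. Since ${\rm SH}(M)$ is the closure of its interior (Theorem~\ref{conf}), $\Gamma_\infty$ is a limit of gallimaufries $(\partial_{\chi<0} M,m_n,\emptyset)$ with $m_n\in{\cal T}(\partial_{\chi<0} M)$; by the definition of convergence in ${\cal GA}(\partial M)$ this forces $\ell_{m_n}(c)\to0$, because $c$ is freely homotopic to a component of $\partial\overline{F}_\infty$. Now consider the twisted metrics $\phi^k_*m_n$. For every $k$, and for \emph{any} choice of indices, $(\partial_{\chi<0} M,\phi^k_*m_n,\emptyset)$ still converges to $\Gamma_\infty$ as $n\to\infty$, because the only lengths entering the definition of convergence — those of $c$ and of curves inside $F_\infty$ — are unaffected by $\phi^k$. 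On the other hand, for a fixed $n$ the surfaces $(\partial_{\chi<0} M,\phi^k_*m_n)$, $k\in\N$, all have the same positive systole, so $\{(\partial_{\chi<0} M,\phi^k_*m_n,\emptyset)\}_k$ cannot converge to a gallimaufry with a cusp, and it cannot converge to an interior point either, since $\{\phi^k_*m_n\}_k$ leaves every compact subset of ${\cal T}(\partial_{\chi<0} M)$ as $k\to\infty$; hence this sequence has no convergent subsequence in ${\cal GA}(\partial M)$. Therefore, given any compact $K\subset{\cal DI}(\partial M)$, for each $n$ only finitely many of the $\phi^k_*m_n$ lie in $K$, so we may choose $k_n$ with $(\partial_{\chi<0} M,\phi^{k_n}_*m_n,\emptyset)\notin K$; the resulting sequence lies entirely outside $K$ and yet converges to $\Gamma_\infty$. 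Since $K$ was arbitrary, $\Gamma_\infty$ has no compact neighbourhood, and ${\rm SH}(M)\cong{\cal DI}(\partial M)$ is not locally compact.

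The one step that is more than bookkeeping is the existence, for every $M$ as in the statement, of the cusped geometrically finite representation $\rho_\infty$: one pinches an essential non-peripheral simple closed curve $c\subset\partial_{\chi<0} M$ and must know that the resulting pared manifold is hyperbolizable, equivalently that $M$ admits no essential annulus or M\"obius band with both boundary components in a single annular neighbourhood of $c$. This is automatic when $M$ is acylindrical, and when $M$ is an $I$-bundle it still holds for every such $c$: although $c$ then bounds an essential annulus, its two boundary circles lie on the two different sides of the bundle, not in one annular neighbourhood of $c$. Apart from this point, the proof is a faithful transcription of that of Lemma~\ref{locom2}, with the sole extra observation that every gallimaufry appearing in the construction is either of Teichm\"uller type or equal to $\Gamma_\infty$, hence lies in ${\cal DI}(\partial M)$, so the whole argument stays inside the subspace homeomorphic to ${\rm SH}(M)$.
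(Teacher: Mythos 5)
Correct, and essentially the paper's own (one-line) proof: you transport the construction of Lemma~\ref{locom2} through the homeomorphism of Theorem~\ref{conf}, and you rightly supply the one point the paper leaves implicit, namely that the whole twisted sequence must consist of doubly incompressible gallimaufries, which you arrange by basing the construction at a geometrically finite representation with a rank-one cusp. The only soft spot is your justification that a pinchable curve $c$ exists: the dichotomy ``acylindrical or $I$-bundle'' is not exhaustive (for a general cylindrical $M$ one should take $c$ not isotopic into the characteristic submanifold), and for a \emph{twisted} $I$-bundle not every $c$ works since a curve bounding a vertical M\"obius band cannot be pinched --- but the existence of such cusped geometrically finite representations is standard, so this does not affect the argument.
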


\setcounter{section}{5}

\section{Self bumping and local connectivity}
\label{secselb}

By constructing appropriate paths in ${\cal GA}(\partial M)$ we  study the local connectivity of the set of doubly incompressible gallimaufries.  We note that by \cite{espoir}, the set of doubly incompressible gallimaufries is an open subset of ${\cal GA}(\partial M)$.

A note about the notation in this Section: The approximating gallimaufries all have the same moderate surface, namely $\partial_{\chi <0} (\partial M)$, and the same immoderate lamination, namely the empty lamination.  Therefore, we identify each approximating gallimaufry with the metric on $\partial_{\chi <0} (\partial M)$.  In particular, we speak of sequences of metrics on the moderate surface as converging to a limiting gallimaufry.

\begin{proposition}             \label{segments}
Let $M$ be a compact, orientable, hyperbolizable $3$-manifold with incompressible boundary. Let $\Gamma=(F,m,L)$ be a doubly incompressible gallimaufry and let $\{(\partial_{\chi<0} M,m_n,\emptyset)\}$ and $\{(\partial_{\chi<0} M,s_n,\emptyset)\}$ be two sequences ${\rm int}({\cal GA}(\partial M))$ converging to $\Gamma$. Then, up to passing to a subsequence, there is an arc $k_n\subset {\rm int}({\cal GA}(\partial M))$ joining $m_n$ to $s_n$ such that every sequence of points $\{z_n\in k_n\}$ converges to $\Gamma$.
\end{proposition}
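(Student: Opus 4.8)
The plan is to produce $k_n$ by interpolating $m_n$ and $s_n$ in a system of coordinates on ${\cal T}(\partial_{\chi<0} M)={\rm int}({\cal GA}(\partial M))$ adapted to $\Gamma=(F,m,L)$, chosen so that conditions i)--iii) of the definition of convergence in ${\cal GA}(\partial M)$ (Section \ref{gall}) are manifestly stable along the path. After passing to a subsequence I stabilise the data. Let $\Sigma\subset\partial_{\chi<0} M$ be the multi-curve along which $\{m_n\}$, equivalently $\{s_n\}$, is pinched; using i) and iii) one checks that $\Sigma$ consists exactly of the curves freely homotopic into $\partial\overline F$, into $\partial\overline{S(L_0)}$ for a connected component $L_0$ of $L$, or into the core of an annular component of $\partial_{\chi<0} M-(F\cup S(L))$ — a curve whose $m_n$-length stays in a compact subset of $(0,\infty)$ would enlarge the moderate surface of the limit, and a curve crossing one of the above has $m_n$-length $\to\infty$. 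Fix a pants decomposition $P$ of $\partial_{\chi<0} M$ with $\Sigma\subset P$ that restricts to a pants decomposition $P_F$ of $\overline F$ and to pants decompositions $P_{L_0}$ of the $\overline{S(L_0)}$; every curve of $P$ lying outside $F\cup S(L)$ is then freely homotopic into $\Sigma$. As coordinates on ${\cal T}(\partial_{\chi<0} M)$ we take the Fenchel--Nielsen coordinates relative to $P\setminus\bigcup_{L_0}{\rm int}(\overline{S(L_0)})$, together with, for each component $L_0$ of $L$, the coordinate ${\cal F}_{P_{L_0}}(\,\cdot\,|_{\overline{S(L_0)}})$ of Section \ref{withboundary} (well defined, since along the path the boundary lengths of $\overline{S(L_0)}$ will stay as small as we wish for $n$ large). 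Each $L_0$ is minimal and irrational, being a connected recurrent lamination with no closed leaf, so the set $PM(L_0)$ of projective transverse measures supported on $L_0$ is a compact simplex; we extract further so that $[{\cal F}_{P_{L_0}}(m_n|_{S(L_0)})]$ and $[{\cal F}_{P_{L_0}}(s_n|_{S(L_0)})]$ converge in ${\cal PML}(S(L_0))$ to points of $PM(L_0)$, which is exactly the unwinding of ``$m_n|_{S(L_0)}$ tends to $L_0$''.

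Now I interpolate coordinate by coordinate. For the curves of $P$ outside $\bigcup_{L_0}{\rm int}(\overline{S(L_0)})$: those in $P_F$ have lengths and twists of both endpoints converging to those of $m$, so I interpolate them linearly and the $\overline F$-part of $k_n(t)$ converges to $m$ uniformly in $t$; the remaining curves are homotopic into $\Sigma$, have both endpoint lengths $\to0$, and I interpolate their lengths and twists keeping the length between the two endpoint values, so their $k_n(t)$-lengths $\to0$ uniformly in $t$ and, by the Collar Lemma, every curve crossing one of them has $k_n(t)$-length $\to\infty$ uniformly in $t$. For each $L_0$: for $n$ large both ${\cal F}_{P_{L_0}}(m_n|_{S(L_0)})$ and ${\cal F}_{P_{L_0}}(s_n|_{S(L_0)})$ are carried by a common train track $\tau^{(n)}$ minimally carrying $L_0$, and $\tau^{(n)}$ may be chosen (by compactness of $PM(L_0)$ and the existence of a train track neighbourhood basis around $L_0$, see \cite{penner}) so that every lamination carried by $\tau^{(n)}$ projects into a neighbourhood of $PM(L_0)$ shrinking to $L_0$ as $n\to\infty$; writing $w_n,w_n'$ for the associated weight systems, I take $tw_n+(1-t)w_n'$, which is again a weight system for $\tau^{(n)}$ because the switch conditions are linear, which assigns positive intersection number to every leaf of $P_{L_0}$ because intersection with a fixed curve is a linear functional on the weight cone (hence lies in the image of ${\cal F}_{P_{L_0}}$), and whose $\ell^1$-norm is at least $\min(\|w_n\|,\|w_n'\|)\to\infty$. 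Assembling these interpolations yields a path $k_n\colon[0,1]\to{\rm int}({\cal GA}(\partial M))$ with $k_n(0)=m_n$ and $k_n(1)=s_n$; a generic small perturbation makes it an embedded arc without affecting the convergence below.

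It remains to check that an arbitrary sequence $z_n=k_n(t_n)$ converges to $\Gamma$ in ${\cal GA}(\partial M)$, i.e.\ satisfies i)--iii) of Section \ref{gall}. Condition ii) is vacuous since each $z_n$ has empty immoderate lamination; condition i) holds because $z_n|_F\to m$ and $\ell_{z_n}(\partial\overline F)\to0$. For iii): for each component $L_0$ of $L$, the coordinate ${\cal F}_{P_{L_0}}(z_n|_{S(L_0)})$ is the measured lamination carried by $\tau^{(n)}$ with weight system $t_nw_n+(1-t_n)w_n'$, of norm $\to\infty$ and projecting into a neighbourhood of $PM(L_0)$ shrinking to $L_0$; by the construction of Section \ref{withboundary} (which rests on Theorem \ref{compactlam}) this means $z_n|_{S(L_0)}$ tends to $L_0$. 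Convexity of the simplex $PM(L_0)$ is exactly what makes this go through when $[{\cal F}_{P_{L_0}}(m_n|_{S(L_0)})]$ and $[{\cal F}_{P_{L_0}}(s_n|_{S(L_0)})]$ converge to non-proportional measures on $L_0$, which can occur since $L_0$ need not be uniquely ergodic. As a consistency check, no curve outside $F$ enters the moderate surface of $\lim z_n$: curves crossing $\Sigma$, and curves non-peripheral in some $\overline{S(L_0)}$, have $z_n$-length $\to\infty$, while curves homotopic into $\Sigma$ have $z_n$-length $\to0$.

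The delicate step is the $\overline{S(L_0)}$-part: interpolating two divergent sequences of metrics on $S(L_0)$ must still produce metrics that degenerate precisely onto $L_0$, and a naive interpolation of Teichm\"uller or Fenchel--Nielsen coordinates will not do, because length functions and twists are badly behaved near the Thurston boundary. Routing the interpolation through the linear weight cone of a train track that carries $L_0$ and splits down onto it is what makes the convergence transparent, and the minimality of $L_0$ (equivalently the convexity of $PM(L_0)$) is precisely what absorbs the possible failure of unique ergodicity. Everything else is routine once this coordinate system is in place.
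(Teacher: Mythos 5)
Your overall architecture is the same as the paper's: interpolate Fenchel--Nielsen data on the moderate part and measured-lamination data (via the ${\cal F}_P$ coordinates of Section \ref{withboundary}) on $S(L)$, then verify conditions i)--iii) for an arbitrary diagonal sequence $k_n(t_n)$. The gap is in the one step you yourself flag as delicate: the existence, for each $n$, of a \emph{single} train track $\tau^{(n)}$ that carries both ${\cal F}_{P_{L_0}}(m_n|_{S(L_0)})$ and ${\cal F}_{P_{L_0}}(s_n|_{S(L_0)})$ and whose entire carried cone is projectively close to $PM(L_0)$. The justification offered --- compactness of $PM(L_0)$ plus ``a train track neighbourhood basis around $L_0$'' --- does not deliver this. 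The cone of measured laminations carried by a train track is a closed cone whose projectivization is in general \emph{not} a neighbourhood of $PM(L_0)$ in ${\cal PML}(S(L_0))$: a sequence with $[\mu_n]\to[\lambda_0]\in PM(L_0)$ can have supports $|\mu_n|$ converging in the Hausdorff topology to $L_0$ together with extra non-recurrent leaves (diagonals of the complementary polygons of $L_0$), and such $\mu_n$ are not carried by a track that is merely a small Hausdorff neighbourhood of $L_0$, nor by one ``minimally carrying $L_0$''. Worse, the Hausdorff limits $R$ of $|{\cal F}_{P}(m_n)|$ and $R'$ of $|{\cal F}_{P}(s_n)|$ can be \emph{different} laminations containing $L_0$, with mutually incompatible extra leaves, so no track adapted to one of them carries the tail of the other sequence. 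Enlarging $\tau^{(n)}$ until its cone becomes a genuine ${\cal PML}$-neighbourhood of $PM(L_0)$ (which essentially forces completeness of the track and an invariance-of-domain argument) is in tension with the requirement that \emph{every} lamination it carries be projectively close to $PM(L_0)$; reconciling the two requires a full-splitting argument along $L_0$ that you neither state nor prove, and that is not an off-the-shelf statement of \cite{penner}.

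The paper's proof is built precisely to avoid this dilemma. It does not interpolate directly between ${\cal F}_P(m_n)$ and ${\cal F}_P(s_n)$: it routes the path through the midpoint $K_n\lambda$, where $\lambda$ is a fixed transverse measure supported on $L$ and $K_n\to\infty$, and uses \emph{two} families of train tracks --- one forming a Hausdorff neighbourhood basis of $R=\lim |{\cal F}_P(m_n)|$ for the half-path from ${\cal F}_P(m_n)$ to $K_n\lambda$, and one adapted to $R'$ for the other half. Since $L\subset R$ and $L\subset R'$, the lamination $\lambda$ is carried by both families, carriage of each single sequence's tail is automatic (that is what Hausdorff neighbourhood bases do give, following \cite{conti}), and Claim \ref{hausdlim} then shows every interpolant has Hausdorff limit $R$, $R'$ or $L$, hence containing $L$. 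If you want to keep your single-track version, you must either prove the neighbourhood-basis statement for $PM(L_0)$ in ${\cal PML}$ via complete tracks and full splittings, or adopt the paper's two-step detour through $K_n\lambda$. The remaining parts of your argument (the identification of the pinched multicurve $\Sigma$, the Fenchel--Nielsen interpolation on $G$, the linearity of the switch conditions and of intersection numbers on the weight cone, and the verification of i)--iii) via Theorem \ref{compactlam}) match the paper and are fine.
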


\begin{proof}
The main difficulty is that on $F$ we need to have precise control on the behavior of $z_n$, whereas on $S(L)$ we need to control the large scale behavior of $z_n$. To deal with that issue, we use Fenchel-Nielsen coordinates to control the behavior of $z_n$ on $F$ and the measured geodesic lamination ${\cal F}_P(z_n)$ as defined in Section \ref{withboundary} to control the large scale behavior of $z_n$ on $S(L)$.

 We denote by $G$ the surface obtained by adding to $\overline{F}$ every component of $\partial M-\overline{F}$ that is an essential annulus. For each $n$ and $t$ we will define the metric $k_n(t)$ independently on $G$ and on the closure $H$ of $\partial_{\chi<0} M-G$.

Choose $\eps>0$ and consider a pants decomposition $P$ of $\partial M$ so that every component of $\partial\overline{F}$ is homotopic to a component of $P$ and so that $\ell_{m_n}(c)\geq\eps$ and $\ell_{s_n}(c)\geq\eps$ for every $n$ and every component $c$ of $P$ that is non-peripheral in $\overline{F}$.

Let us first define the restriction of $k_n(t)$ to $H$. In $(\partial_{\chi<0} M,m_n,\emptyset)$ and $(\partial_{\chi<0} M,s_n,\emptyset)$ we view $H$ as a subsurface with geodesic boundary. Since $\{ s_n\}$ and $\{ m_n\}$ tend to $\Gamma$,  we have $\ell_{m_n}(\partial H)\longrightarrow 0$ and $\ell_{s_n}(\partial H)\longrightarrow 0$. Then for $n$ large enough, there is associated to the restriction of $m_n$, respectively $s_n$, to $H$ a measured geodesic lamination ${\cal F}_P(m_n)$, respectively ${\cal F}_P(s_n)$ (see Section \ref{withboundary}). Furthermore the restriction of $m_n$ to $H$ is uniquely defined by ${\cal F}_P(m_n)$ and $\{\ell_{m_n}(c)| c\mbox{ is a component of } \partial H\}$.

Extract a subsequence such that $\{{\cal F}_P(m_n)\}$ converges to a geodesic lamination $R$ in the Hausdorff topology. Since the restriction of $m_n$ to $S(L)$ tends to $L$, then any subsequence of ${\cal F}_P(m_n)$ that converges projectively has a projective limit supported by $L$. It follows that $L$ is a sublamination of $R$. Consider a sequence $\{ \tau_n\}$ of train tracks carrying $R$ so that $\{\tau_n\}$ is a basis of neighborhoods of $R$ for the Hausdorff topology. Namely every sequence of laminations $\{R_n\}$ such that $R_n$ is minimally carried by $\tau_n$ converges to $R$ in the Hausdorff topology (for the existence of such train tracks, see \cite{conti}). Notice that if $R_n$ is minimally carried by $\tau_n$ then up to extracting a subsequence $\{R_n\}$ converges to a sublamination of $R$. Since $L$ is a sublamination of $R$, we may choose the $\tau_n$ so that $L$ is minimally carried by a sub-track of $\tau_n$ for every $n$.

 Since $\{ {\cal F}_P(m_n)\}$ converges to $R$, up to changing the indices, we may assume that ${\cal F}_P(m_n)$ is carried by $\tau_n$. In particular ${\cal F}_P(m_n)$ defines a weight system ${\cal F}_P(m_n)(\tau_n)$ on $\tau_n$. Let $\lambda$ be a measured geodesic lamination with support $L$, and note that ${\cal F}_P(m_n)$ also defines a weight system $\lambda(\tau_n)$ on $\tau_n$.  Let $K_n\rightarrow\infty$ be a sequence of positive real numbers. For $t\in[0,\frac{1}{2}]$, the weight system $2(\frac{1}{2}-t) {\cal F}_P(m_n)(\tau_n)+2 tK_n\lambda(\tau_n)$ defines a measured geodesic lamination $\mu_n(t)$ carried by $\tau_n$ such that $\mu_n(0)={\cal F}_P(m_n)$ and $\mu_n(\frac{1}{2})=K_n\lambda$. For $t\neq\frac{1}{2}$, $\mu_n$ is minimally carried by $\tau_n$. It follows that we have:  

\begin{claim}    \label{hausdlim}
Consider a sequence $\{t_n\}\in[0,\frac{1}{2}]^\N$. Then the Hausdorff limit of every subsequence of $\{\mu_n(t_n)\}$ is either $R$ or $L$. In particular it contains $L$.
\end{claim}

Consider a simple closed curve $c\subset G$.  The intersection number $i(c,\mu_n(t))$ is given by the following formula:

\begin{claim}    \label{interloin}
For $n$ large enough, depending on $c$, we have $i(c,\mu_n(t))=2(\frac{1}{2}-t)i(c,{\cal F}_P(m_n))+2t K_ni(c,\lambda)$ for all $t\in[0,1]$.
\end{claim}

\begin{proof}
Since $\{ \tau_n\}$ is a basis of neighborhoods of $R$ for the Hausdorff topology, for $n$ large enough, depending on $c$, we can change $\tau_n$ and $c$ by homotopies so that for every measured geodesic lamination $\lambda$ carried by $\tau_n$, $i(c,\lambda)$ is the sum of the weights of the branches of $\tau_n$ that $c$ intersects transversely, counted with multiplicity. In particular, for $n$ large enough, we have $i(c,\mu_n(t))=2(\frac{1}{2}-t)i(c,{\cal F}_P(m_n))+2t K_ni(c,\lambda)$.
\end{proof}

Using a similar construction, replacing $m_n$ by $s_n$ we get a path of measured geodesic laminations $\mu_n(t)$, $t\in [\frac{1}{2},1]$ with $\mu_n(\frac{1}{2})=K_n\lambda$ and $\mu_n(1)={\cal F}_P(s_n)$.  Hence, given a sequence $\{t_n\}\in[\frac{1}{2},1]^n$, the Hausdorff limit of  every subsequence of $\{\mu_n(t_n)\}$ contains $L$.

 We define the restriction of $k_n(t)$ to $H$ for $t\in[0,1]$ as follows: $\ell_{k_n(t)}(c)=(1-t)\ell_{m_n}(c)+t\ell_{s_n}(c)$ for every component $c$ of $\partial H$ and ${\cal F}_P(k_n(t))=\mu_n(t)$.

Next we define the restriction of $k_n(t)$ to $G$. Consider the Fenchel-Nielsen coordinates $\{(\ell_c,\theta_c)| c \mbox{ a component of } P\}$ associated to $P$. Namely for a metric $m$, $\ell_c(m)\in]0,\infty[$ is the $m$-length of $c$ and $\theta_c\in\R$ is the twist angle along $c$ (taking the convention that $\theta_c(m')=\theta_c(m)+2\pi$ corresponds to a full Dehn twist along $c$). On $G$ we define the restriction of $k_n(t)$ as follows: each component $c$ of $\partial G$ is a closed geodesic with length $\ell_c(k_n(t))=(1-t)\ell_c(m_n)+t\ell_c(s_n)$; the Fenchel-Nielsen coordinates of $k_n(t)$ along each non-peripheral component $c$ of $P\cap G$ are $\{(1-t)\ell_c(m_n)+t\ell_c(s_n),(1-t)\theta_c(m_n)+t\theta_c(s_n)\}$. This defines a metric with geodesic boundary on $G$.

Notice that a component $d$ of $\partial G$ is also a component of $\partial H$ and that the length given to $d$ by the restriction of $k_n(t)$ to $G$ is the same as the length given by the restriction to $H$. We just need to define how we glue together the metrics defined on $G$ and $H$ to get a complete definition of $k_n(t)$. For a component $c$ of $\partial G$, the Fenchel-Nielsen coordinates of $k_n(t)$ along $c$ are $\{(1-t)\ell_c(m_n)+t\ell_c(s_n),(1-t)\theta_c(m_n)+t\theta_c(s_n)\}$.

Now a complete hyperbolic metric $k_n(t)$ has been defined on $\partial_{\chi<0} M$ for every $n\in\N$ and all $t\in[0,1]$. It follows easily from the definition that, for a fixed $n$, $k_n(t)$  is a continuous path joining $m_n$ to $s_n$. The following two Claims will conclude the proof of Proposition \ref{segments}.

\begin{claim}           \label{tiersborne}
For every sequence $\{t_n\}\subset [0,1]^\N$ the restrictions of the $k_n(t_n)$ to $G$ tend to the complete hyperbolic metric $m$ on $F$.
\end{claim}

\begin{proof}
Since $\{ m_n\}$ and $\{ s_n\}$ both converge to $m$ on $F$, for every component $c$ of $P$ that is homotopic to a cusp of $F$, we have $\ell_{m_n}(c)\longrightarrow 0$ and $\ell_{s_n}(c)\longrightarrow 0$. Thus we get $\ell_{k_n(t_n)}(c)=t_n\ell_c(m_n)+(1-t_n)\ell_c(s_n)\longrightarrow 0$. If $c$ is a component of $P\cap G$ that is not homotopic to a cusp of $F$, then  $\{\ell_c(m_n),\theta_c(m_n)\}$ and $\{\ell_c(s_n),\theta_c(s_n)\}$ both converge to $\{\ell_c(m),\theta_c(m)\}$. Hence $\{\ell_c(k_n(t_n)),\theta_c(k_n(t_n))\}=\{t_n\ell_c(m_n)+(1-t_n)\ell_c(s_n),t_n\theta_c(m_n)+(1-t_n)\theta_c(s_n)\}$ converges to $\{\ell_c(m),\theta_c(m)\}$. It follows that the restrictions of the $k_n(t)$ to $G$ tend to the complete hyperbolic metric $m$ on $F$.
\end{proof}

\begin{claim}   \label{tiersunborne}
Let $L_1$ be a component of $L$. Then for every sequence $\{t_n\}\in [0,1]^\N$, the restrictions of the $k_n(t_n)$ to $S(L)$ tend to $L$.
\end{claim}

\begin{proof}
We will assume that we have $\{t_n\}\in [0,\frac{1}{2}]^\N$. The proof is similar if we assume $\{t_n\}\in [\frac{1}{2},1]^\N$ and these two results are enough to conclude the proof of Proposition \ref{segments}.
Consider a non-peripheral simple closed curve $c\subset S(L_1)$. Since the restrictions of the $m_n$ to $S(L_1)$ tend to $L_1$, we have $i({\cal F}_P(m_n),c)\longrightarrow\infty$ and $\frac{i({\cal F}_P(m_n),\partial\overline{S(L_1)})}{i({\cal F}_P(m_n),c)}\longrightarrow 0$. We also have that $K_n i(\lambda,c)\longrightarrow\infty$ and $K_n i(\lambda,d)=0$, where $d$ is as defined in the second paragraph before the statement of Claim \ref{tiersborne}. From Claim \ref{interloin} and Theorem \ref{compactlam}, we get $\ell_{k_n(t_n)}(c)\longrightarrow\infty$ and $\frac{\ell_{k_n(t_n)}(\partial\overline{S(L_1)})}{\ell_{k_n(t_n)}(c)}\longrightarrow\infty$. Hence, up to extracting a subsequence the restrictions of the $k_n(t_n)$ to $S(L_1)$ tend to a projective measured geodesic lamination. By Claim \ref{hausdlim}, the support of this projective lamination is $L_1$. This concludes the proof of Claim \ref{tiersborne}.
\end{proof}

By Claims \ref{tiersborne} and \ref{tiersunborne}, for every sequence $\{t_n\}\in [0,1]^\N$, $\{k_n(t_n)\}$ converges to $\Gamma$.
\end{proof}

Using Theorem \ref{conf} and Proposition \ref{segments}, we can now prove Theorems \ref{locon} and \ref{selfb}.

\setcounter{theorem}{0}

\begin{theorem}
Let $M$ be a compact, orientable, hyperbolizable $3$-manifold with incompressible boundary. Then the space ${\rm SH}(\pi_1(M))$ is locally connected.
\end{theorem}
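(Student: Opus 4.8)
The plan is to peel off the components of ${\rm SH}(\pi_1(M))$, transport the question to the space of doubly incompressible gallimaufries via Theorem~\ref{conf}, and there run the arc construction of Proposition~\ref{segments}.

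First I would reduce to a single homeomorphism type. By Lemma~\ref{topo} each set ${\rm SH}(M')$, as $M'$ ranges over the compact orientable hyperbolizable $3$-manifolds (all with incompressible boundary) homotopy equivalent to $M$, is closed in ${\rm SH}(\pi_1(M))$ and the ${\rm SH}(M')$ are pairwise disjoint; moreover the compact-core argument in the proof of Lemma~\ref{topo} --- which uses only that the representations are tame and converge strongly --- shows that a strongly convergent sequence whose limit lies in ${\rm SH}(M')$ has all but finitely many terms in ${\rm SH}(M')$, so each ${\rm SH}(M')$ is also open. Thus ${\rm SH}(\pi_1(M))$ is the topological disjoint union of the ${\rm SH}(M')$ and it suffices to show each one is locally connected. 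Fixing $M'$, Theorem~\ref{conf} identifies ${\rm SH}(M')$ with the set ${\cal DIG}$ of doubly incompressible gallimaufries on $\partial M'$; by \cite{espoir} this is open in ${\cal GA}(\partial M')$, it contains the Teichm\"uller space ${\cal T}:={\rm int}({\cal GA}(\partial M'))$ (the gallimaufries with empty immoderate lamination, a connected manifold), and ${\cal T}$ is dense in ${\cal DIG}$, this last being precisely the assertion, obtained in the proof of Theorem~\ref{conf}, that ${\rm SH}(M')$ is the closure of its interior.

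The heart of the matter is the following consequence of Proposition~\ref{segments}: \emph{for every open $U\ni\Gamma$ in ${\cal DIG}$ there is an open $W$, $\Gamma\in W\subseteq U$, such that any two points of $W\cap{\cal T}$ are joined by an arc in $U\cap{\cal T}$.} Were this false, a nested neighbourhood basis $\{W_n\}$ of $\Gamma$ would produce $p_n,q_n\in W_n\cap{\cal T}$ (nonempty, by density of ${\cal T}$) admitting no arc joining them in $U\cap{\cal T}$; but Proposition~\ref{segments}, applied to the sequences $\{p_n\}$ and $\{q_n\}$, which both converge to the doubly incompressible gallimaufry $\Gamma$, furnishes (after passing to a subsequence) arcs $k_n\subset{\cal T}$ joining $p_n$ to $q_n$ along which every choice $z_n\in k_n$ converges to $\Gamma$, hence $k_n\subseteq U$ for $n$ large --- a contradiction. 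Granting this, I would conclude using the standard criterion that a space is locally connected iff the connected component of every point in every open set is open. So let $U\subseteq{\cal DIG}$ be open, $\Gamma\in U$, and let $C_\Gamma$ be the connected component of $\Gamma$ in $U$. Choose $W\subseteq U$ as above, fix $p_0\in W\cap{\cal T}$, and let $\Sigma$ be the union of $W\cap{\cal T}$ with arcs in $U\cap{\cal T}$ joining $p_0$ to each point of $W\cap{\cal T}$; then $\Sigma$ is connected and $\Sigma\subseteq U$. By density of ${\cal T}$ every point of $W$ is a limit of points of $W\cap{\cal T}\subseteq\Sigma$, so $W\subseteq\overline\Sigma$; in particular $\overline\Sigma\cap U$ is a connected subset of $U$ containing $\Gamma$, whence $\overline\Sigma\cap U\subseteq C_\Gamma$ and therefore $W\subseteq C_\Gamma$. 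Thus $C_\Gamma$ is a neighbourhood of $\Gamma$, and since the same argument applies to every point of $C_\Gamma$ (with the same component $C_\Gamma$ in $U$), $C_\Gamma$ is open. This proves ${\cal DIG}$, and with the reduction above ${\rm SH}(\pi_1(M))$, is locally connected.

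The genuine difficulty is entirely in Proposition~\ref{segments}: building interpolating metrics $k_n(t)$ that keep tight Fenchel--Nielsen control on the moderate surface while forcing the part embraced by the immoderate lamination to degenerate transversely to $L$, uniformly enough that \emph{every} trajectory $\{z_n\in k_n\}$ still converges to $\Gamma$; this is where double incompressibility of $\Gamma$ enters, through the minimal carrying of $L$ by the train tracks $\tau_n$ and the control of intersection numbers. Everything after that is soft point-set topology, whose only subtlety is that Proposition~\ref{segments} controls arcs only between interior (geometrically finite, minimally parabolic) gallimaufries, so one routes through the dense subset ${\cal T}$ and then returns to a genuine neighbourhood by taking the closure of a connected set of interior points.
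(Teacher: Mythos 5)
Your proposal is correct and follows essentially the same route as the paper: localize to a single ${\rm SH}(M')$ via Lemma~\ref{topo}, transport to gallimaufries via Theorem~\ref{conf}, and extract a connected neighborhood from the arcs of Proposition~\ref{segments}. The only difference is that you spell out the point-set step the paper leaves implicit (routing through the dense interior ${\cal T}$ and taking the closure of the union of arcs), which is a legitimate and careful filling-in rather than a new idea.
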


\begin{proof}
Consider a representation $\rho\in {\rm SH}(\pi_1(M))$.  By the tameness of $\rho(\pi_1(M))$, see \cite{bouts}, there is a compact manifold $M'$ such that $\rho$ uniformizes $M'$. By Lemma \ref{topo}, every sufficiently small neighborhood ${\cal V}\subset {\rm SH}(\pi_1(M))$ of $\rho$ lies in ${\rm SH}(M')$. Let $\Gamma\in{\cal GA}(\partial M')$ be the end invariants of $\rho$. By Theorem \ref{conf}, ${\cal V}$ is homeomorphic to a neighborhood ${\cal W}\subset {\cal GA}(\partial M')$ of $\Gamma$. By Proposition \ref{segments}, ${\cal W}$ contains a connected neighborhood ${\cal W}'$ of $\Gamma$. Taking the preimage of ${\cal W}'$ under the ending map, we get a connected neighborhood ${\cal V}'\subset{\cal V}$ of $\rho$. Thus we have proved that ${\rm SH}(\pi_1(M))$ is locally connected.
\end{proof}

The proof of Theorem \ref{selfb} follows the same lines.

\setcounter{theorem}{1}

\begin{theorem}
Let $M$ be a compact, orientable, hyperbolizable $3$-manifold with incompressible boundary.
Let $\rho\in {\rm SH}(\pi_1(M))$ be a representation uniformizing $M$. Then every neighborhood of $\rho$ contains a neighborhood ${\cal V}\subset {\rm SH}(\pi_1(M))$ of $\rho$ such that ${\cal V}\cap {\rm int}({\rm SH}(\pi_1(M)))$ is connected.
\end{theorem}

\begin{proof}
Let ${\cal V}_1\subset {\rm SH}(\pi_1(M))$ be a neighborhood of $\rho$. By Lemma \ref{topo}, ${\cal V}_1$ contains a neighborhood ${\cal V}_2\subset {\rm SH}(M)$ of $\rho$. We denote by $e$ the ending map as defined in Theorem \ref{conf}. By Proposition \ref{segments}, $e({\cal V}_2)$ contains a neighborhood ${\cal W}\subset {\rm Im}(e)$ of $e(\rho)$ such that ${\cal W}\cap {\rm int}({\cal GA}(\partial M))$ is connected. Taking ${\cal V}=e^{-1}({\cal W})$, we get a neighborhood of $\rho$ such that ${\cal V}\cap {\rm int}({\rm SH}(\pi_1(M)))$ is connected.
\end{proof}

\section{The action of ${\rm Mod}(M)$}
\label{secmodul}

In this last Section we  study the action of ${\rm Mod}(M)$ on ${\rm SH}(M)$.

\setcounter{theorem}{2}

\begin{theorem}
Let $M$ be a compact, orientable, hyperbolizable $3$-manifold with incompressible boundary. Assume that $M$ is not an $I$-bundle over a closed surface. Then the action of ${\rm Mod}(M)$ on ${\rm SH}(M)$ is properly discontinuous. 
\end{theorem}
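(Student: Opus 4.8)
The plan is to use the ending homeomorphism of Theorem~\ref{conf} to turn the statement into one about gallimaufries, and then to locate the r\^ole of the hypothesis in the characteristic submanifold of $M$. First I would note that ${\rm Mod}(M)$ acts on ${\rm SH}(M)$ by $\phi\cdot\rho=\rho\circ\phi_*^{-1}$, and that the end invariants of $\rho\circ\phi_*^{-1}$ are those of $\rho$ transported by the boundary restriction $r(\phi)=\phi|_{\partial M}\in{\rm Mod}(\partial M)$; hence the ending map $e\colon{\rm SH}(M)\to{\cal GA}(\partial M)$ of Theorem~\ref{conf} intertwines this action with the natural action of $r({\rm Mod}(M))$ on the (invariant) set of doubly incompressible gallimaufries. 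Since a mapping class trivial on $\partial M$ fixes all end invariants, the action of ${\rm Mod}(M)$ on ${\rm SH}(M)$ is properly discontinuous exactly when $\ker(r)$ is finite \emph{and} $r({\rm Mod}(M))$ acts properly discontinuously on the doubly incompressible gallimaufries; the finiteness of $\ker(r)$ is standard for an atoroidal Haken manifold with incompressible boundary, so the content is the second assertion.

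Assume it fails: there is a compact $K\subset{\rm SH}(M)$ and pairwise distinct $\phi_n\in{\rm Mod}(M)$ with $\rho_n\in K$ and $\phi_n\rho_n\in K$. Passing to a subsequence, $\Gamma_n:=e(\rho_n)\to\Gamma=(F,m,L)$ and $\Gamma'_n:=r(\phi_n)(\Gamma_n)=e(\phi_n\rho_n)\to\Gamma'=(F',m',L')$ in ${\cal GA}(\partial M)$, with $\Gamma,\Gamma'$ doubly incompressible, and $\bar\phi_n:=r(\phi_n)$ satisfies $\ell_{\Gamma'_n}(\bar\phi_n c)=\ell_{\Gamma_n}(c)$ for every simple closed curve $c$ on $\partial M$ (with the convention $\ell=0$ on curves that become parabolic). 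The next step is to control $\bar\phi_n$ on the ``non-degenerate'' part of $\partial M$. For $c\subset F$ the length $\ell_{\Gamma_n}(c)$ is bounded by the convergence $\Gamma_n\to\Gamma$, hence so is $\ell_{\Gamma'_n}(\bar\phi_n c)$; together with the description of the convergence $\Gamma'_n\to\Gamma'$ this forces $\bar\phi_n c$ into $\overline{F'}$ for large $n$ and confines $\bar\phi_n|_{\overline F}$ to a compact subset of the relevant Teichm\"uller space, so that proper discontinuity of mapping class group actions on Teichm\"uller space leaves only finitely many possibilities for $\bar\phi_n|_{\overline F}$. After a further subsequence $\bar\phi_n|_{\overline F}$ is a fixed mapping class; running the same argument on the subsurfaces where the metrics $m_n$ (and $\bar\phi_n^{-1}m'_n$) stay bounded reduces us to the case where $\bar\phi_n$ is fixed off the subsurfaces $S(L)$ and $S(L')$, i.e.\ the distinctness of the $\phi_n$ lives entirely in the degenerating parts.

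The crux — and the step I expect to be the main obstacle — is to derive a contradiction from this last configuration, and it is here that double incompressibility and the hypothesis that $M$ is not an $I$-bundle over a closed surface enter. A persistent family of distinct mapping classes $\bar\phi_n$, fixed off $S(L)\cup S(L')$ and carrying $\Gamma_n$ to $\Gamma'_n$, must — given that $m_n|_{S(L)}$ tends to the filling lamination $L$ while $\bar\phi_n(m_n|_{S(L)})$ tends to the filling lamination $L'$ — differ from one another by elements of the (virtually cyclic) stabiliser of a component $L_i$ of $L$, so that infinitely many of the $\bar\phi_n$ force a mapping class of $\partial M$ that is pseudo-Anosov on a subsurface whose filling lamination is $L_i$ (and dually for $\Gamma'$). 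By Johannson's characteristic submanifold theory, a mapping class of $\partial M$ that is pseudo-Anosov on a subsurface $\Sigma_0$ and extends over $M$ has $\Sigma_0$ isotopic into the horizontal boundary of an $I$-bundle piece $X$ of the characteristic submanifold of $M$, with the pseudo-Anosov induced by one on the base surface $B$ of $X$; its stable lamination is disjoint from $\partial B$, so $L_i$ is disjoint from the core curves of the frontier annuli $\partial_v X$. These annuli are essential in $(M,\partial M-F)$, so this contradicts the double incompressibility of $\Gamma$ — unless $\partial_v X=\emptyset$, i.e.\ $B$ is closed, in which case $X$ is a union of components of $M$ and hence $M=X$ is an $I$-bundle over a closed surface, contrary to hypothesis. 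This contradiction shows that only finitely many $\phi_n$ can occur.

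An alternative route to the same dichotomy, which avoids Johannson's theory, is representation-theoretic: since $\rho_n(\pi_1(M))=(\phi_n\rho_n)(\pi_1(M))$ as Kleinian groups, the strong convergence of both $\{\rho_n\}$ and $\{\phi_n\rho_n\}$ forces their Chabauty limits to agree up to conjugacy, and one asks whether the conjugating isometries stay bounded. If they do, then $\phi_{n*}$ is eventually a fixed automorphism of $\pi_1(M)$, hence $\phi_n$ is eventually fixed by Waldhausen's injectivity ${\rm Mod}(M)\hookrightarrow{\rm Out}(\pi_1(M))$, a contradiction; if they do not, the ``escaping'' end of the limit forces a boundary component of $M$ (or an index-two subsurface of one) to carry $\pi_1(M)$, which by Waldhausen again makes $M$ an $I$-bundle over a closed surface. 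Either way the hypothesis is precisely what is needed, and I would expect controlling the unbounded/escaping case to be the technically delicate point, exactly paralleling the double-incompressibility argument above.
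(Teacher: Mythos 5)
Your overall architecture is genuinely close to the paper's: argue by contradiction with two convergent sequences $\{\rho_n\}$ and $\{\rho_n\circ\phi_{n*}\}$, control $\phi_n$ separately on the surfaces facing geometrically finite and geometrically infinite ends, and use Johannson's characteristic submanifold together with double incompressibility to show that an infinite family of $\phi_n$ would force $M$ to be an $I$-bundle over a closed surface. But the order in which you deploy Johannson creates the main gap. In your third paragraph you assert that the distinct $\bar\phi_n$ carrying $\Gamma_n\to\Gamma$ to $\Gamma'_n\to\Gamma'$ ``must differ from one another by elements of the (virtually cyclic) stabiliser of a component $L_i$ of $L$.'' This does not follow from the Teichm\"uller-theoretic data $m_n|_{S(L)}\to L$ and $(\bar\phi_n)_*m_n|_{S(L)}\to L'$ alone: the mapping class group orbit of a filling projective lamination is dense in ${\cal PML}(S(L))$, so there exist infinitely many classes $\psi_n$ lying in no common coset of the stabiliser of $L$ with $\psi_n^{-1}[\lambda']\to[\lambda]$, and one can arrange degenerating metrics realising exactly the length identities you have. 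The only thing that excludes such $\psi_n$ here is that the $\bar\phi_n$ extend over $M$ --- i.e.\ Johannson's structure theory for ${\rm Mod}(M)$ must be invoked \emph{before} this step, not after. That is what the paper does: it first makes $(\phi\circ\phi_n)|_{M-W}$ isotopic to the identity, extracts from the solid-torus and torus-$I$-bundle components of $M-W$ (nonempty precisely because $M$ is not an $I$-bundle and ${\rm Mod}(M)$ is infinite) a curve $c$ with $\phi_n(c)$ isotopic to $c$ and non-peripheral in the end-facing surfaces, and then uses $c$ as an anchor: every $d$ disjoint from $c$ has $\phi_n(d)$ disjoint from $c$, and the intersection estimates against ${\cal F}_{P'}(m'_n)$ via Theorem \ref{compactlam} force $\phi_n(d)$ to be eventually constant (Claim \ref{isof}). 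Your endgame --- the frontier annuli $\partial_vX$ of an $I$-bundle piece having zero intersection with $\lambda$, contradicting double incompressibility unless $\partial_vX=\emptyset$ --- is the right mechanism and matches the spirit of the paper, but it is applied to an object (the partial pseudo-Anosov supported on $S(L_i)$) whose existence you have not established.

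Two secondary points. First, in paragraph 2, boundedness of $\ell_{\Gamma'_n}(\bar\phi_n c)$ does not force $\bar\phi_n c$ into $\overline{F'}$: the Bers short pants curves on $S(L')$ also have bounded length while Hausdorff-converging to $L'$. What one can use is $\ell_{m'_n}(\bar\phi_n(\partial\overline F))\to 0$, which pins $\bar\phi_n(\partial\overline F)$, on a subsequence, to the finite multicurve of parabolic classes and hence identifies $\bar\phi_n(F)$ with a complementary piece; the paper's Claim \ref{isof} does this carefully. Second, your ``alternative route'' via Chabauty limits is essentially the paper's Claim \ref{commute} and Lemma \ref{extr}, but those only serve to upgrade triviality of $\phi_n$ on a non-abelian incompressible piece of $\partial M$ to global triviality; the unbounded-conjugator case does not by itself force $M$ to be an $I$-bundle over a closed surface, and the implication you sketch there is not correct as stated.
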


\begin{proof}
Consider a sequence of representations $\{\rho_n\}\subset {\rm int}({\rm SH}(M))$ and a sequence of diffeomorphisms $\{\phi_n: M\rightarrow M\}$ such that $\{\rho_n\}$ and $\{\rho_n\circ\phi_{n*}\}$ converge respectively to representations $\rho_\infty$  and $\rho'_\infty$ (up to conjugacy) in ${\rm SH}(M)$. We  show that up to extracting a subsequence, the $\phi_n$ are isotopic. It follows easily that the action of ${\rm Mod}(M)$ on ${\rm SH}(M)$ is properly discontinuous. Since minimally parabolic geometrically finite representations are dense in ${\rm SH}(M)$, we may assume that $\rho_n$ is such a representation without any loss of generality. As a first step, the following Claim  is useful to pick the correct representation in each conjugacy class.

\begin{claim}   \label{commute}
Let $\{\gamma_n\}$, $\{a_n\}$ and $\{b_n\}$ be sequences of elements of ${\rm PSL}(2,\C)$. Assume that the sequences $\{a_n\}$ and $\{b_n\}$ converge to hyperbolic isometries $a_\infty$ and $b_\infty$ such that $a_\infty$ and $b_\infty$ do not have a common fixed point. Assume also that the sequences $\{\gamma_n a_n\gamma_n^{-1}\}$ and $\{\gamma_n b_n\gamma_n^{-1}\}$ converge. Then, up to extracting a subsequence, $\{\gamma_n\}$ converges.
\end{claim}

\begin{proof}
Assume that no subsequence of $\{\gamma_n\}$ converges.  We show that we end up with a contradiction. For $n$ sufficiently large, $a_n$ and $b_n$ are hyperbolic isometries. Let $A_n$ and $B_n$ be the axes of $a_n$ and $b_n$ respectively, for $n\in\N\cup\{\infty\}$. By assumption $\{A_n\}$, respectively $\{B_n\}$, converges to $A_\infty$, respectively $B_\infty$, in the Hausdorff topology on $\Hp^3\cup\partial_\infty \Hp^3$. Consider a point $x\in\Hp^3$. Since we have assumed that no subsequence of $\{\gamma_n\}$ converges, we have ${\rm d}(x,\gamma_n^{-1}(x))\longrightarrow\infty$. Since $a_\infty$ and $b_\infty$ do not have a common fixed point, we have $\max\{{\rm d}(\gamma_n^{-1}(x),A_n),{\rm d}(\gamma_n^{-1}(x),B_n)\}\longrightarrow\infty$.

 By assumption the sequence $\{{\rm d}(x,\gamma_n a_n\gamma_n^{-1}(x))\}$ converges. On the other hand, we have ${\rm d}(x,\gamma_n a_n\gamma_n^{-1}(x))={\rm d}(\gamma_n^{-1}(x), a_n\gamma_n^{-1}(x))$ which converges if and only if $\{{\rm d}(\gamma_n^{-1}(x),A_n)\}$ is a bounded sequence. Similarly we get that $\{{\rm d}(\gamma_n^{-1}(x),B_n)\}$ is a bounded sequence. Thus we get that $\max\{{\rm d}(\gamma_n^{-1}(x),A_n),{\rm d}(\gamma_n^{-1}(x),B_n)\}$ is bounded, contradicting the preceding paragraph.
\end{proof}

This Claim allows us to get the expected conclusion under some extra assumptions on the diffeomorphisms $\phi_n$.

\begin{lemma}           \label{extr}
Let $M$ be a compact, orientable, hyperbolizable $3$-manifold with incompressible boundary.  Let $N\subset\partial M$ be an incompressible compact submanifold such that $\pi_1(N)$ is not Abelian. Let $\{\phi_n:M\rightarrow M\}$ be a sequence of diffeomorphisms such that, up to homotopy, $\phi_{n} |_N$ is the identity and let $\{\rho_n\}\subset {\rm int}({\rm SH}(M))$ be a convergent sequence. Assume that the sequence $\{\rho_n\circ\phi_{n*}\}$ converges as well. Then there is a diffeomorphism $\phi:M\rightarrow M$ such that up to extracting a subsequence, each $\phi\circ\phi_n$ is isotopic to the identity. 
\end{lemma}

\begin{proof}
Let $\rho_n:\pi_1(M)\rightarrow {\rm PSL}(2,\C)$ be a representative of $\rho_n\in {\rm int}({\rm SH}(M))$ such that $\{\rho_n\}$ converges to $\rho_\infty$, namely we have actual convergence and not only just up to conjugacy. Consider the sequence $\{\rho_n\circ\phi_{n*}:\pi_1(M)\rightarrow {\rm PSL}(2,\C)\}$. By assumption there is a sequence $\{\gamma_n\}\subset {\rm PSL}(2,\C)$ so that $\{\gamma_n(\rho_n\circ\phi_{n*})\gamma_n^{-1}\}$ converges. Since $\pi_1(N)$ is not Abelian, the non-elementary Kleinian group $\rho_\infty(\pi_1(N))$ contains two hyperbolic isometries $\rho_\infty(g_1)$ and $\rho_\infty(g_2)$ with disjoint fixed point sets. By assumption, we have $\rho_n\circ\phi_{n*}(g_i)=\rho_n(g_i)$ for $i=1,2$. Thus taking $a_n=\rho_n(g_1)$ and $b_n=\rho_n(g_2)$, we get from Claim \ref{commute} that, up to extracting a subsequence, $\{\gamma_n\}$ converges. Hence, up to extracting a subsequence $\{\rho_n\circ\phi_{n*}\}$ converges. 

We have $\rho_n\circ\phi_{n*}(\pi_1(M))=\rho_n(\pi_1(M))$, as follows. Since $\{\rho_n\circ\phi_{n*}\}$ converges and since $\{\rho_n\}$ converges strongly, for every $a\in\pi_1(M)$ there is $b\in\pi_1(M)$ such that  $\{\rho_n\circ\phi_{n*}(a)\}$ converges to $\rho_\infty(b)$. It follows from \cite{jorgi} that for $n$ sufficiently large $\phi_{n*}(a)=b$. Thus we have proven that for every closed curve $a\subset M$ there is a closed curve $b\subset M$ such that $\phi_n(a)$ is homotopic to $b$ for $n$ sufficiently large.

Consider a pants decomposition $P$ of $\partial_{\chi<0}M$. By the above paragraph, for $n$ sufficiently large, $\phi_n$ maps each component $c$ of $P$ to some given curve $d\subset\partial M$ that is freely homotopic in $M$ to $c$. By \cite{jojo} and \cite{jacs}, for a given curve $c$, there are only finitely many such simple closed curves $d$ up to isotopy on $\partial M$. It follows that up to extracting a subsequence there is $n_0$ such that $\phi_{n_0}^{-1}\circ\phi_n$ maps $P$ to itself up to isotopy. Consider another pants decomposition $P'\subset\partial_{\chi<0} M$ such that the components of $\partial_{\chi<0} M-(P\cup P')$ are discs. Using the same arguments we find a diffeomorphism $\phi$ and a subsequence of $\{\phi_n\}$ such that $\phi\circ\phi_n (P)$ is isotopic to $P$ and $ \phi\circ\phi_n(P')$ is isotopic to $P'$ in $\partial M$. It follows that the restrictions of the $\phi\circ\phi_n$ to $\partial_{\chi<0} M$ are isotopic to the identity. By \cite{jojo}, up to passing to a subsequence,  $\phi\circ\phi_n$ is isotopic to the identity for every $n$. This concludes the proof of Lemma \ref{extr}.
\end{proof}

This is enough to conclude the proof of Theorem \ref{modul}  in most cases that remain. Let $W$ be a characteristic submanifold for $M$. Such a characteristic submanifold is an union of $I$-bundles and solid tori and its basic property is that every essential annulus in $M$ is isotopic to an annulus lying in $W$. The existence of such a characteristic submanifold has been proved in \cite{jojo} and \cite{jacs}. If we assume that a component $N$ of $M-W$ has a non-Abelian fundamental group, then we can argue as follows. By \cite{jojo}, up to extracting a subsequence there is a diffeomorphism $\phi:M\rightarrow M$ such that, up to homotopy, each $(\phi\circ\phi_n) |_{M-W}$ is the identity. In particular, up to homotopy, $(\phi\circ\phi_n) |_{N}$ is the identity. By assumption the sequences $\{\rho_n\}$ and $\{\rho_n\circ(\phi\circ\phi_n)_*\}$ converge in ${\rm SH}(M)$. It follows from Lemma \ref{extr} that up to extracting a subsequence and up to changing $\phi$, the diffeomorphism $\phi\circ\phi_n$ is isotopic to the identity for every $n$. Hence, up to extracting a subsequence, the diffeomorphisms $\phi_n$ are isotopic and we are done.

When every component of $M-W$ has Abelian fundamental group, the closure of $M-W$ is an union of solid tori. In this case we will use Theorem \ref{conf} to find an incompressible surface $H\subset\partial M$ satisfying the hypotheses of Lemma \ref{extr}.
Let $\Gamma_n=(F_n,m_n,L_n)$ and $\Gamma'_n=(F'_n,m'_n,L'_n)$ be the ending gallimaufries of $\rho_n$ and $\rho'_n=\rho_n\circ\phi_{n*}$, respectively. Since we have assumed that $\rho_n$ is minimally parabolic and geometrically finite, we have $F_n=F'_n=\partial_{\chi<0} M$ and $L_n=L'_n=\emptyset$. Let $\Gamma_\infty=(F_\infty,m_\infty,L_\infty)$ and $\Gamma'_\infty=(F'_\infty,m'_\infty,L'_\infty)$ be the ending gallimaufries of $\rho_\infty$ and $\rho'_\infty$, respectively. By Theorem \ref{conf}, $\{\Gamma_n\}$ converges to $\Gamma_\infty$ and $\{\Gamma'_n\}$ converges to $\Gamma'_\infty$. To each end of $\Hp^3/\rho_\infty(\pi_1(M))$ corresponds a subsurface of $\partial M$; we will say that this subsurface {\it faces} an end of $\Hp^3/\rho_\infty(\pi_1(M))$. Now we  reduce the search for $H$ to the search for a curve $c\subset\partial M$ with some specific properties.

\begin{claim}     \label{isof}
Let $c\subset\partial M$ be a simple closed curve such that $\phi_n(c)$ is isotopic to $c$ on $\partial M$ for every $n$. Assume that there are surfaces $H\supset c$ and $H'\supset c$ such that $H$ faces an end of $\Hp^3/\rho_\infty(\pi_1(M))$ and that $H'$ faces an end of $\Hp^3/\rho'_\infty(\pi_1(M))$. Assume that $c$ is peripheral in neither  $H$ nor  $H'$. Then there exist a diffeomorphism $\phi:M\rightarrow M$ and a subsequence such that each $(\phi\circ\phi_n) |_{H}$ is isotopic to the identity.
\end{claim}

\begin{proof}
First, we show that the ends faced by $H$ and $H'$ are of the same type. 

\begin{claim}          \label{sametype}
The surface $H'$ faces a geometrically finite end of $M'_\infty$ if and only if $H$ faces a geometrically finite end of $M_\infty$.
\end{claim}
\begin{proof}
By Theorem \ref{conf}, $H$, respectively $H'$, faces a geometrically finite end if and only if $\{ \ell_{m_n}(c)\}$, respectively $\{ \ell_{m_n}(\phi_n(c))\}$, is bounded. By assumption, $\phi_n(c)$ is isotopic to $c$ hence we have $\ell_{m_n}(\phi_n(c))=\ell_{m_n}(c)$. Thus $H'$ faces a geometrically finite end of $M'_\infty$ if and only if $H$ faces a geometrically finite end of $M_\infty$.
\end{proof}

Consider the case where $H$ and $H'$ face geometrically finite ends of $M_\infty$ and $M'_\infty$ respectively. By Theorem \ref{conf}, $\{m_n |_H\}$ and $\{m'_n |_{H'}\}$ converge to complete hyperbolic metrics on $H$ and $H'$ respectively. Since $\phi_n(H)$ contains $c$, $\phi_n(H)$ intersects $H'$. On the other hand,  $\ell_{m'_n}(\phi_n(\partial\overline{H}))\longrightarrow 0$. Thus, for $n$ large enough, $\phi_n(\partial\overline{H})\cap\overline{H'}\subset\partial\overline{H'}$ up to isotopy. It follows that up to isotopy, for $n$ large enough, $\phi_n(H)$ either is disjoint from $H'$ or contains $H'$. Since $c\subset\phi_n(H)$ (up to isotopy) and $c\subset H'$, $H'\subset \phi_n(H)$. Similarly, $\phi_n^{-1}(\partial\overline{H'})\cap\overline{H}\subset\partial\overline{H}$ up to isotopy. It follows that $H\subset \phi_n^{-1}(H')$ for $n$ large enough. Thus we have proved that $\phi_n(H)=H'$, up to isotopy, for $n$ large enough. Up to extracting a subsequence and composing by a fixed diffeomorphism, we may assume $H'=H$ up to isotopy.

Given a curve $d\subset H$, there is $K$ such that $\ell_{m_n}(d)\leq K$. We also have $\ell_{m'_n}(\phi_n(d))=\ell_{m_n}(d)\leq K$. Since $\Gamma'_n$ converges to $\Gamma_\infty$, $m'_n$ converges to $m'_\infty$. It follows that for $n$ large enough, we have $\ell_{m'_\infty}(\phi_n(d))\leq 2K$. There are only fintely many $m'_\infty$-geodesics with length bounded by $K$. Hence there are only finitely many possibilities for the isotopy class of $\phi_n(d)$. Since this holds for any closed curve $d\subset H$, there are only finitely many possibilities for the isotopy class of the diffeomorphism $\phi_n |_H:H\rightarrow H$. This concludes the proof of Claim \ref{isof} when $H$ faces a geometrically finite end of $M_\infty$.\\

\indent
Assume now that $H$ faces a geometrically infinite end of $M_\infty$ with ending lamination $L$ and that $H'$ faces a geometrically infinite end of $M'_\infty$ with ending lamination $L'$. Consider pants decompositions $P$ and $P'$ of $\partial_{\chi<0} M$ such that $\ell_{m_n}(d)\leq\eps$ for every component $d$ of $P$ and $\ell_{m'_n}(d')\leq\eps$ for every component $d'$ of $P'$. Consider $\eps_n\longrightarrow 0$ such that, up to extracting a subsequence, $\eps_n{\cal F}_P(m_n)\cap H$ tends to a measured geodesic lamination $\lambda$. Since $\Gamma_n$ tend to $\Gamma_\infty$, the support of $\lambda$ is the ending lamination $L_\infty\cap H$ of $H$, which in particular it fills $H$. By Claim \ref{sametype}, the restriction of the $m'_n$ to $H'$ does not contain a convergent subsequence. Extract a subsequence such that $\{ \eps_n{\cal F}_{P'}(m'_n)\cap H'\}$ tends to a measured geodesic lamination $\lambda'$. Since $\{ \Gamma'_n\}$ converges to $\Gamma'_\infty$, the support of $\lambda'$ is the support of the ending lamination $L'_\infty\cap H'$ of $H'$, which fills $H'$.

If $\partial\overline{H}$ is empty, then $H$ is a component of $\partial M$ and the same holds for $H'$. Since $c\subset \phi_n(H)$ and $c\subset H'$ by assumption, we have $H'=\phi_n(H)$. Otherwise, for a component $d$ of $\partial\overline{H}$, we have $\eps_n\ell_{m_n}(d)\longrightarrow 0$. Hence $\eps_n\ell_{m'_n}(\phi_n(d))\longrightarrow 0$. Extract a subsequence such that $\{ \phi_n(d)\}$ converges in the Hausdorff topology to a geodesic lamination $D$. Since $d$ is disjoint from $c$, $D$ is disjoint from $c$. Since $|\lambda'|$ fills $H'$, if $D$ were to intersect $H'$ without being peripheral, then $D$ would cross $|\lambda'|$. Since $\{ \eps_n{\cal F}_{P'}(m'_n)\cap H'\}$ tends to $\lambda'$ we would then have $\liminf\eps_n\ell_{m'_n}(\phi_n(d))\geq\liminf\eps_n i({\cal F}_{P'}(m'_n),\phi_n(d))>0$. Thus we have proved $\phi_n(\partial\overline{H})\cap\overline{H'}\subset\partial\overline{H'}$ up to isotopy. Similarly we have $\phi_n^{-1}(\partial\overline{H'})\cap\overline{H}\subset\partial\overline{H}$ up to isotopy and we may assume  $H'=\phi_n(H)$ up to isotopy, for the same reasons as in the geometrically finite case above.

Let us extend $c$ to a pants decomposition $C$ of $H$ and consider a leaf $d$ of $C$. Extract a subsequence such that $\{ \phi_n(d)\}$ converges in the Hausdorff topology to a geodesic lamination $D\subset H$. Since $d$ is disjoint from $c$, $D$ is disjoint from $c$ and hence crosses $|\lambda|$. If $D$ is not a simple closed curve, we have $\eps_n i({\cal F}_{P'}(m'_n),\phi_n(d))\longrightarrow\infty$. By Theorem \ref{compactlam}, we have $\eps_n\ell_{m'_n}(\phi_n(d))\longrightarrow\infty$. This would contradict $\eps_n\ell_{m'_n}(\phi_n(d))=\eps_n\ell_{m_n}(d)\longrightarrow i(d,\lambda)$. Thus, up to extracting a subsequence $\phi_n(d)$ does not depend on $n$. It follows that there is $\phi:M\rightarrow M$ such that up to extracting a subsequence $\{ \phi\circ\phi_n\}$ leaves $C$ invariant, up to isotopy. For each leaf $d$ of $C$, we choose a transverse $t$, namely a simple closed curve that crosses $d$ and is disjoint from $C-d$. By the same argument, up to extracting a subsequence $\phi_n(t)$ does not depend on $t$. It follows that, up to changing $\phi$, $(\phi\circ\phi_n)|_H$ is isotopic to the identity.
\end{proof}

Now we can conclude the proof of Theorem \ref{modul} by finding a curve $c$ satisfying the assumption of Claim \ref{isof}. Let $W\subset M$ be a characteristic submanifold.  By \cite{jojo}, up to extracting a subsequence, there is a diffeomorphism $\phi:M\rightarrow M$ such that $(\phi\circ\phi_n) |_{M-W}$ is isotopic to the identity. As we have seen after the proof of Lemma \ref{extr}, if a component of $M-W$ has a non-Abelian fundamental group, we are done.

Assume that all components of $M-W$ have Abelian fundamental groups. Then all the components of the closure of $M-W$ are solid tori and either $M$ is an $I$-bundle or decomposes as the union of $I$-bundles over compact surfaces with boundary, $I$-bundles over tori and solid tori. 

If a component $T_1$ of this decomposition is an $I$-bundle over a torus, $\partial T_1\cap\partial M$ is the union of one torus and some (at least one) annuli and $\phi_n$ exchanges the components of $T_1\cap\partial M$ (up to isotopy). Hence there is a homeomorphism $\psi:M\rightarrow M$ such that $\psi\circ\phi_n$ maps each component of $T_1\cap\partial M$ to itself (up to isotopy and) up to extracting a subsequence. Since $\Gamma_\infty$ and $\Gamma'_\infty$ are doubly incompressible, every curve $c$ lying in such an annulus lies in the "middle" of an end. Namely for every representation $\rho\in {\rm SH}(M)$, $c$ lies in a surface $H$ facing an end of $M_\rho$ and $c$ is not peripheral in $\overline{H}$. Since $\psi\circ\phi_n$ maps each component of $T_1\cap\partial M$ to itself, $\psi\circ\phi_n(c)$ is isotopic to $c$ on $\partial M$. Hence $c$ fulfills the hypothesis of Claim \ref{isof} (replacing $\phi_n$ with $\psi\circ\phi_n$). This provides us with an incompressible surface $H\subset\partial M$ with non-Abelian fundamental group  and a diffeomorphism $\phi:m\rightarrow M$ such that $(\phi\circ\phi_n)_{|H}$ is isotopic to the identity.

If a component $T_1$ of  the decomposition of $M$ is a solid torus, $T_1\cap\partial M$ is the union of annuli. If those annuli are not primitive then we are in the same situation as before. Namely, every curve $c$ lying in such an annulus lies in the "middle" of an end. Hence we can conclude as in the preceding paragraph.

We are left with the case where $M$ is the union of $I$-bundles over surfaces with boundary and solid tori and where for each such solid torus $T$, $T\cap\partial M$ is an union of primitive annuli. By assumption $M$ is not an $I$-bundle.  This is possible only if at least one component $T_1$ of $W$ is a solid torus such that $T_1\cap\partial M$ has at least three components. Furthermore $\phi_n$ exchanges the components of $T_1\cap\partial M$, hence there is $\psi$ such that $\psi\circ\phi_n$ maps each component of $T_1\cap\partial M$ to itself. Since $\Gamma_\infty$ and $\Gamma'_\infty$ are doubly incompressible, at most one simple closed curve $d\subset T_1\cap\partial M$, respectively  $d'\subset T_1\cap\partial M$, is peripheral in a surface $G$ facing an end of $M_\infty$, respectively  in a surface $G'$ facing an end of $M'_\infty$. Since $T_1\cap\partial M$ contains at least three non-isotopic simple closed curves, we can chose a simple closed curve $c\neq d$ with $\phi_n(c)\neq d'$ and $\psi\circ\phi_n(d)$ isotopic to $d$ for infinitely many $n$. Thus $c$ lies in a surface $H$ facing an end of $M_\infty$, $c$ is not peripheral in $\overline{H}$, $\phi_n(c)$ lies in a surface $H'$ facing an end of $M'_\infty$ and $\phi_n(c)$ is not peripheral in $\overline{H'}$ and we can conclude as before.

The surface $H$ produced in Claim \ref{isof} provides us with an incompressible manifold $N=H$ satisfying the hypotheses of Lemma \ref{extr}. From Lemma \ref{extr}, we conclude that up to extracting a subsequence, the diffeomorphisms $\phi_n$ are isotopic. This concludes the proof of Theorem \ref{modul}.
\end{proof}

When $M$ is a trivial $I$-bundle over a closed surface $S$ it is easy to see that Theorem \ref{modul} does not hold. Consider a pseudo-Anosov diffeomorphism $\phi:S\rightarrow S$ with stable lamination $\lambda^+$ and unstable lamination $\lambda^-$. Consider the diffeomorphism $\phi:M\rightarrow M$ whose projection along the fibers is $\phi$. Consider the representation $\rho\in\partial {\rm SH}(M)$ which is doubly degenerate and has ending laminations $|\lambda^+|$ on one side and $|\lambda^-|$ on the other side. Then $\rho$ is a fixed point of the action of $\phi$ on ${\rm SH}(M)$.

\addcontentsline{toc}{chapter}{Bibliography}

\footnotesize{

}

\end{document}